\documentclass[11pt,fleqn]{amsart}

\usepackage{epsfig}
\usepackage{amsmath}
\usepackage{amssymb}
\usepackage{amscd}
\usepackage{latexsym}
\usepackage{tabularx}
\usepackage{a4wide}
\usepackage[usenames]{color}
\usepackage{enumerate}
\usepackage{subfigure}
\usepackage{url}

\usepackage{url}

\usepackage[matrix,arrow,tips,curve]{xy}
\usepackage{pb-diagram,pb-xy}
\usepackage{verbatim}
\usepackage{mathrsfs}
\usepackage{color}

\numberwithin{equation}{section}

\newtheorem{thm}{Theorem}[section]

\newtheorem{prop}[thm]{Proposition}

\newtheorem{claim}[thm]{Claim}
\newtheorem{cor}[thm]{Corollary}

\newtheorem{remark}[thm]{Remark}
\newtheorem{defn}[thm]{Definition}

\newtheorem{question}[thm]{Question}

\renewcommand{\r}{\mathfrak r}

\makeatletter
\DeclareRobustCommand{\cev}[1]{%
  \mathpalette\do@cev{#1}%
}
\newcommand{\do@cev}[2]{%
  \fix@cev{#1}{+}%
  \reflectbox{$\m@th#1\vec{\reflectbox{$\fix@cev{#1}{-}\m@th#1#2\fix@cev{#1}{+}$}}$}%
  \fix@cev{#1}{-}%
}
\newcommand{\fix@cev}[2]{%
  \ifx#1\displaystyle
    \mkern#23mu
  \else
    \ifx#1\textstyle
      \mkern#23mu
    \else
      \ifx#1\scriptstyle
        \mkern#22mu
      \else
        \mkern#22mu
      \fi
    \fi
  \fi
}

\makeatother

\DeclareMathOperator\val{val}

\newcommand{\br}{\mathrm{br}}

\title{Logarithmic Tree Factorials}

\author{Omid Amini}
\address{CNRS - D\'epartement de math\'ematiques et applications, \'Ecole Normale Sup\'erieure, Paris}
\email{oamini@math.ens.fr}
\begin{document}
\begin{abstract}
 To any rooted tree, we associate a sequence of numbers  that we call the logarithmic  factorials of the tree. This provides a generalization of Bhargava's factorials to a natural combinatorial setting suitable for studying questions around generalized factorials.

  We discuss several basic aspects of the framework in this paper. In particular, we relate the growth of the sequence of logarithmic factorials associated to a tree to the transience of the random walk and the existence of a harmonic measure on the tree, obtain an equidistribution theorem for factorial-determining-sequences of subsets of local fields, and provide a factorial-based characterization of the branching number of infinite trees. 

 Our treatment is based on a local weighting process in the tree which gives an effective way of  constructing the factorial sequence.

\end{abstract}
\maketitle

\section{Introduction}

Let $T$ be a rooted tree with root $\r$, and let $\ell: E(T) \rightarrow \mathbb R_{+}$ be a length function on the edges of $T$. 
Denote by $\Gamma$ the metric realization of the pair $(T, \ell)$, which is a rooted metric tree with root $\r$. We call the unite length function $\ell \equiv 1$ which assigns value one to all the edges of a tree $T$ the \emph{standard} length function. 

We orient $T$ away from the root, and, by an abuse of the notation, denote by $E(T)$ the set of oriented edges of $T$.  For any vertex of $T$, we denote by $[\r,v]$ the oriented path (resp. segment) from $\r$ to $v$ in $T$ (resp. $\Gamma$).

Consider the boundary $\partial T$ of $T$, which is by definition, the set of all infinite oriented paths  in $T$ with starting vertex at the root $\r$, and define the \emph{extended boundary} $\widetilde \partial T$ as the union of $\partial T$ with the set of all oriented paths in $T$ from the root $\r$ to a leaf of $T$. For any pair $(T, \ell)$ with metric realization $\Gamma$, define $\widetilde \partial (T, \ell) = \widetilde \partial \Gamma = \widetilde \partial T$. 
For any point $\rho \in \widetilde \partial T$, we denote by $E(\rho)$ the set of all the edges of $T$ which are in $\rho$.

\medskip

Any two different elements of $\partial T$ have a finite number of edges in common. So we can define a non-negative real-valued \emph{intersection pairing} $\langle\,,\rangle$ on $\widetilde \partial \Gamma$ as follows. For any two points $\rho, \tau \in \widetilde \partial \Gamma$, with $\rho \neq \tau$ if both $\rho $ and $\tau$ both belong to the boundary of $T$, let 
\[\langle \rho, \tau\rangle := \ell(\rho \cap \tau ) = \sum_{e \in E(\rho)\cap E(\tau)} \ell(e).\]

Consider the following \emph{greedy procedure} in choosing a sequence of elements $\rho_0, \rho_1, \dots$ in $\widetilde \partial \Gamma$.  Let $\rho_0 \in \widetilde \partial \Gamma$ be any arbitrary element of the extended boundary. Proceeding inductively on $n\in \mathbb N$, assume that $\rho_0, \dots, \rho_{n-1} \in \widetilde \partial \Gamma$ have been chosen, and choose $\rho_{n}$, if possible, arbitrarily among the set of all elements $\rho \in \widetilde \Gamma \setminus \{\rho_0, \dots, \rho_{n-1}\}$ which minimizes the sum $\sum_{j=0}^{n-1} \langle \rho, \rho_j\rangle$. 
Define $a_n:= \sum_{j=0}^{n-1} \langle \rho_{n},\rho_j\rangle.$  We have 
\begin{thm} \label{thm:main1-intro} For any pair $(T, \ell)$ consisting of a rooted tree $T$ and a length function $\ell$ on $T$, the sequence $\{a_n\}$ constructed above only depends on the metric realization $\Gamma$ of $(T, \ell)$.
\end{thm}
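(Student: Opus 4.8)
The plan is to convert the greedy recipe into an intrinsic minimization problem and then check that the greedy sequence records the successive minima. Write $F_n=\{\rho_0,\dots,\rho_{n-1}\}$ for the set selected after $n$ steps, and for a finite subset $F\subseteq\widetilde\partial\Gamma$ put $g(F):=\sum_{\{\rho,\tau\}\subseteq F}\langle\rho,\tau\rangle$. Setting $c_{n-1}(e)=\#\{j<n: e\in E(\rho_j)\}$ and expanding the pairing edge by edge gives
\[ \sum_{j=0}^{n-1}\langle\rho,\rho_j\rangle=\sum_{e\in E(\rho)}c_{n-1}(e)\,\ell(e), \]
so the quantity minimized at step $n$ is the total $c_{n-1}$-weighted length along $\rho$, and consequently $a_n=g(F_{n+1})-g(F_n)$ and $\sum_{m<N}a_m=g(F_N)$. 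Since the counts $c_n(e)$ only increase, the cost of every not-yet-chosen point is non-decreasing in $n$; in particular $\{a_n\}$ is non-decreasing, so it is pinned down by the multiset of its values together with this order.

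First I would isolate the intrinsic invariant $S_N:=\inf\{g(F): |F|=N\}$, which visibly depends only on $\Gamma$. The heart of the matter is the claim that every greedy set is optimal, i.e. $g(F_N)=S_N$ for all $N$; granting this, $a_n=S_{n+1}-S_n$ depends only on $\Gamma$ and we are done. To prove optimality I would decompose along the edges $e_1,e_2,\dots$ issuing from the root, with hanging subtrees $\Gamma_1,\Gamma_2,\dots$. Two boundary points lying in different $\Gamma_i$ meet only at $\r$, while two points in the same $\Gamma_i$ meet along $e_i$ and then inside $\Gamma_i$; writing $F_i=F\cap\widetilde\partial\Gamma_i$ and $n_i=|F_i|$ this yields the exact splitting
\[ g(F)=\sum_i\Big(\binom{n_i}{2}\,\ell(e_i)+g_i(F_i)\Big), \]
where $g_i$ is the analogous functional on $\Gamma_i$. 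Minimizing over $F$ of size $N$ therefore separates into inner minimizations $S^{(i)}_{n_i}:=\inf_{|F_i|=n_i}g_i(F_i)$ and an outer allocation of $N=\sum_i n_i$ units among the subtrees.

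Setting $\Phi_i(n):=\binom{n}{2}\ell(e_i)+S^{(i)}_n$, its increments are $\Phi_i(n)-\Phi_i(n-1)=(n-1)\ell(e_i)+a^{(i)}_{n-1}=:b^{(i)}_{n-1}$, where $a^{(i)}_\bullet$ is the inductively intrinsic greedy sequence of $\Gamma_i$; these increments are non-decreasing, so each $\Phi_i$ is discretely convex. Minimizing the separable convex sum $\sum_i\Phi_i(n_i)$ under $\sum_i n_i=N$ is then solved by repeatedly spending one unit on the subtree with the smallest next increment, so $S_N$ equals the sum of the $N$ smallest elements of the pooled family $\{b^{(i)}_k\}_{i,k}$. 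I would match this to the actual run through a \emph{restriction lemma}: at the moment the global procedure first enters $\Gamma_i$ for the $(k{+}1)$-st time, all remaining $\Gamma_i$-points pass through $e_i$ with the same current count, hence carry a common edge-offset, so minimizing the global cost among them coincides with minimizing the internal cost. Thus the subsequence of selections landing in $\Gamma_i$ is itself a greedy sequence for $\Gamma_i$, its successive internal costs are $a^{(i)}_0,a^{(i)}_1,\dots$, and the global cost of that $(k{+}1)$-st selection is exactly $b^{(i)}_k$. As the global procedure always takes the least available cost, its output is the sorted merge of the $b^{(i)}_\bullet$, realizing $g(F_N)=S_N$ and hence $a_n=S_{n+1}-S_n$.

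The main obstacle is precisely this optimality claim: for arbitrary nonnegative weights, minimizing $g$ over $N$-subsets is a minimum-weight-clique problem and greedy fails, so the argument must genuinely exploit the tree (ultrametric) structure through the splitting identity and the convexity it forces. The remaining care is bookkeeping in the infinite setting. Here I would observe that $g(F)$, $S_N$, and each $a_n$ are governed by the finite subtree spanned by the finitely many points involved, and that deepening any splitting only \emph{increases} intersections; this keeps near-optimal configurations of bounded depth, reduces the convexity and restriction arguments to finite trees, and legitimizes the induction. The clause ``if possible'' then covers exactly the case where the infimum defining a step is not attained, in which situation the procedure halts and the statement concerns the finite sequence produced so far.
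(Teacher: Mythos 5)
Your strategy shares its engine with the paper's proof but is organized around a different pivot. The paper (Theorem~\ref{thm:main1}) never considers a global minimization problem: it sets up the local edge-weighting process, proves by induction on the index $n$ --- with the inductive property $\mathscr P_n$ quantified over \emph{all} trees simultaneously --- that any run produces the sorted merge of the shifted subtree sequences $\bigl\{a^{u_i}_k + k\ell_{\r u_i}\bigr\}_k$ (Claims~\ref{claim2.5} and~\ref{claim2.6}), and then separately identifies the weighting process with the greedy boundary procedure of the introduction. You work directly with the boundary procedure and route well-definedness through the stronger, manifestly intrinsic claim $g(F_N)=S_N$; your restriction lemma and sorted-merge step are exactly the paper's merge claims, while the convexity/pooling layer is the extra piece needed to compare greedy sets against \emph{arbitrary} $N$-subsets. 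If completed, your route buys a Bhargava-style minimality characterization of $\sum_{m<N}a_m$ that the paper does not state; the paper's route avoids optimality altogether and is more elementary.

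The genuine gap is the well-foundedness of your induction, and it bites precisely because of the optimality layer. Structural induction on root subtrees does not terminate on an infinite tree. The paper's fix is to induct on $N$ over all trees at once; this closes because, by positivity of the lengths, a greedy run must visit untouched subtrees (cost $0$) before re-entering touched ones, so in a run of length $N$ each of the $d\ge 2$ subtrees receives at most $N-(d-1)\le N-1$ selections, and the hypothesis at $N-1$ suffices. Your optimality claim does not enjoy this reduction: to bound $g(F)$ from below for an arbitrary $N$-subset $F$ you must allow the allocation $n_i=N$ (all of $F$ inside one subtree $\Gamma_i$), and then you need the subtree inequality $g_i(F_i)\ge S^{(i)}_N$ at the \emph{same} size $N$, so induction on $N$ alone does not close. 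This is repairable --- for instance, first establish well-definedness and the sorted-merge description by the paper's scheme, then prove $g(F)\ge\sum_{m<N}a_m$ by a secondary recursion along the finite meeting tree of $F$, which strictly descends because distinct points must eventually separate at a branching vertex --- but some such device is required and is absent. Relatedly, your reduction to finite trees is not legitimate as stated: truncating at finite depth identifies distinct boundary points and changes the costs of available points, which is exactly why the paper introduces capacity functions before it truncates. Two smaller points: your root splitting makes no progress when $\br(\r)=1$ (a single subtree, no index drop), which needs the paper's separate shifting argument $a_n=a^v_n+n\ell(P)$ along the strict path to the first branching vertex; and the clause ``if possible'' covers exhaustion of $\widetilde\partial\Gamma$, not non-attainment of an infimum --- the minimum at each greedy step is automatically attained, since available points branch off the finitely many chosen paths at vertices that are discretely ordered along each path, with costs non-decreasing in depth.
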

We call the number $a_n$ both the $(T,\ell)$ and $\Gamma$\emph{-factorial} of $n$, and denote it by $ n!_{(T,\ell)}$ or $n!_\Gamma$. We call the sequence $\{\rho_n\}$ in the construction above a \emph{factorial-defining sequence} for $(T, \ell)$ and $\Gamma$. When $\ell$ is the standard length function, we simple write $n!_T$ for the factorials of the pair $(T, \ell)$. 
\medskip

The above definition is a direct extension to arbitrary (metric) trees of the (logarithmic) factorial sequence associated by Bhargava to subsets of the ring of valuation of a local field, that we now recall~\cite{Bha97, Bha09}.

 Let $K$ be a local field with discrete valuation $\mathrm{val}$, with ring of valuation $R$, with maximal ideal $\mathfrak m$, and with residue field $\kappa = R/ \mathfrak m$, which is thus a finite field.  Let $S$  be a subset of $R$. The logarithmic factorial sequence associated to $S$ is obtained as follows. Choose $s_0 \in S$ arbitrary. Proceeding inductively, and assuming $s_{0}, \dots, s_{n-1}$ are already chosen, choose $s_{n}$ among all $s\in S$ which minimizes the quantity $\mathrm{val}\bigl(\,\prod_{j=0}^{n-1} (s-s_j)\,\bigr)$. 
 Define \[n!_S := \val\bigl(\prod_{j=0}^n (s_{n}-s_0) \dots (s_{n} - s_{n-1})\bigr).\]

To any subset $S \subset R$ of $K$ as above, one can associate its \emph{adelic tree} $T_S$, which is a rooted locally finite tree with vertices of valence bounded by $|\kappa|+1$, as follows. For each integer $h\in \mathbb N_*$, consider the projection $\phi_h: \, R \rightarrow R/\mathfrak m^h$, and define $V_h = \phi_h(S)$.  The rooted tree $T_S$ has vertex set 
$\sqcup_{h=0}^{\infty} V_h$, and has as root the unique element of $V_0$. The edge set of $T_S$ is defined as follows. For any $h$, there exists a map $\pi_h: R/\mathfrak m^{h+1} \rightarrow R/\mathfrak m^{h}$, and we have 
$\phi_h = \pi_h\circ \phi_{h+1}$. A vertex $u$ in $V_h$ is adjacent to a vertex $v\in V_{h+1}$ if and only if $\pi_h(v) =u$. 

In the case $S = R$, the tree $T_R$ is the $|\kappa|$-regular tree, and obviously, for any subset $S \subset R$, the tree $T_S$ is a subtree of $T_R$. Consider the closure $\overline S$ of $S$ in $K$. The elements of $\widetilde \partial T_S$, viewed in $\overline S$, form a dense subset $S_0$ of $\overline S$, and the factorials of the tree $T_S$, as defined above, correspond to the factorials of the subset $S_0\subset R$. Since the factorials of $S_0$, $\overline S$, and $S$ are all equal, c.f.~\cite{Wood03}, we get the following proposition.

\begin{prop} Let $K$ be a local field with valuation ring $R$. Let $S$ be  a subset of $R$ with adelic tree $T_S$. We have $n!_S = n!_{T_S}$, where $n!_S$ denotes the  Bhargava's $S$-factorial of $n$.  
\end{prop}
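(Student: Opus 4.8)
The plan is to show that, once $T_S$ is equipped with the standard length function, the greedy procedure defining $n!_{T_S}$ becomes word-for-word Bhargava's greedy procedure applied to the dense subset $S_0 \subset \overline S$, and then to import the invariance of Bhargava's factorials under passage to a dense subset and to the closure. First I would pin down the dictionary between the extended boundary $\widetilde\partial T_S$ and the closure $\overline S$. An element of $\widetilde\partial T_S$ is a coherent sequence $(u_h)_{h\ge 0}$ with $u_h \in V_h = \phi_h(S)$ and $\pi_h(u_{h+1}) = u_h$; since $R = \varprojlim_h R/\mathfrak m^h$ is complete, such a sequence determines a unique element of $R$, which is a limit of elements of $S$ and hence lies in $\overline S$. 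This produces the map $\widetilde\partial T_S \to \overline S$, and one checks it is a bijection onto the dense subset $S_0$ asserted in the paragraph preceding the statement.

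The key computation is the identification of the intersection pairing with the valuation of a difference. For $\rho, \tau \in \widetilde\partial T_S$ corresponding to $x, y \in \overline S$, the oriented paths $[\r,\cdot]$ attached to $\rho$ and $\tau$ share exactly their first $h$ edges, where $h$ is the largest integer with $\phi_h(x) = \phi_h(y)$ in $R/\mathfrak m^h$; equivalently $x - y \in \mathfrak m^h \setminus \mathfrak m^{h+1}$. Because the length function is standard, the pairing simply counts these common edges, so that $\langle \rho, \tau\rangle = |E(\rho)\cap E(\tau)| = h = \val(x - y)$.

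With this dictionary in place, the tree greedy step that minimizes $\sum_{j=0}^{n-1}\langle \rho, \rho_j\rangle$ over $\rho \in \widetilde\partial T_S$ is exactly the minimization of $\sum_{j=0}^{n-1}\val(x - x_j) = \val\bigl(\prod_{j=0}^{n-1}(x - x_j)\bigr)$ over $x \in S_0$, which is Bhargava's rule; and $a_n = \sum_{j=0}^{n-1}\langle \rho_n, \rho_j\rangle$ equals $\val\bigl(\prod_{j=0}^{n-1}(x_n - x_j)\bigr) = n!_{S_0}$. By Theorem~\ref{thm:main1-intro} the sequence $\{a_n\}$ is independent of the (possibly non-unique) greedy choices, so the equality $n!_{T_S} = n!_{S_0}$ holds unambiguously.

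Finally I would invoke Wood's result~\cite{Wood03} that Bhargava's factorials are unchanged under passing to a dense subset or to the closure, giving $n!_{S_0} = n!_{\overline S} = n!_S$ and completing the argument. The only genuinely delicate points are the density and bijectivity of step one — verifying that every boundary path of $T_S$ records a bona fide element of $\overline S$ and that these paths exhaust a dense subset — together with the appeal to~\cite{Wood03} needed to move between $S_0$ and $S$; by contrast, the valuation-intersection identity is a direct unwinding of the definitions and the coincidence of the two greedy rules is then immediate.
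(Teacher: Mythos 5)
Your proposal is correct and follows essentially the same route as the paper: the paper's own (sketched) argument identifies the extended boundary $\widetilde \partial T_S$ with a dense subset $S_0$ of $\overline S$, observes that the tree factorials coincide with Bhargava's factorials of $S_0$, and invokes Wood's theorem to pass from $S_0$ to $\overline S$ to $S$. Your write-up simply makes explicit the details the paper leaves implicit, namely the inverse-limit bijection between boundary paths and elements of $\overline S$ and the identity $\langle \rho,\tau\rangle = \val(x-y)$ translating the two greedy rules into one another.
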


The proof given by Bhargava of the well-definedness of the factorial sequence $n!_S$ is indirect and goes through the ring of integer valued polynomials on $S$. In order to prove Theorem~\ref{thm:main1-intro}, we give an alternative local definition of a sequence associated to a pair $(T, \ell)$, show by induction that it is well-defined and only depends on the metric realization $\Gamma$, and then prove the equivalence of that definition with the definition given above.  Thus our proof leads to an alternative combinatorial proof of the well-definedness of the factorial sequence associated to a  subset of local fields. 

\medskip

Note that  we have not  made so far any finiteness assumption on the valence of vertices of $T$.  In fact, as we will explain in a moment, we can always reduce to the case of locally finite trees with a \emph{capacity function} on leaves, so we next define such objects.

\subsection{Locally finite trees with a capacity function on leaves} \label{sec:locfin-intro}
Let $T$ be a locally finite rooted tree and let $\ell$ be a length function on $E(T)$. Denote by $L(T)$ the set of all  leaves of $T$. By a \emph{capacity function} on $T$ we mean a function $\chi: L(T) \rightarrow \mathbb N \cup\{\infty\}$. We modify the definition of the factorial sequence given in the previous section by taking into account the capacity of leaves of $T$ as follows.  Assuming for an integer $n\in \mathbb N$ that $\rho_0, \dots, \rho_{n-1}$ are chosen, we choose $\rho_n$, if possible, among those $\rho \in \widetilde \partial \Gamma$ which minimizes the sum $\sum_{j=0}^{n-1} \langle \rho, \rho_j\rangle$, and which verify the capacity condition that, when $\rho$ is a leaf of $T$, the number of times $\rho$ appears in the sequence $\rho_0, \dots, \rho_{n-1}$ is strictly less than the capacity of $\rho$. So in the sequence $\rho_0, \rho_1, \dots$ each leaf of $T$ can appear at most as many times as its capacity. We define 
\begin{equation}\label{def:fac-intro}
a_{n} := \sum_{j=0}^{n-1} \langle \rho_{n}, \rho_j\rangle.
\end{equation}
 Then we have the following Theorem.
\begin{thm} \label{thm:main1bis-intro} The sequence $\{a_n\}$ only depends on the pair $(\Gamma, \chi)$, where $\Gamma$ is the metric realization of the pair $(T, \ell)$.
\end{thm}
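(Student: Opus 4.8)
The plan is to reduce Theorem~\ref{thm:main1bis-intro} to the already-established Theorem~\ref{thm:main1-intro} by encoding the capacity function $\chi$ as an enlargement of the tree. The key observation is that a leaf $v$ with capacity $\chi(v)$ behaves, in the greedy procedure, exactly like a cluster of $\chi(v)$ distinct boundary elements that all share the entire segment $[\r,v]$ but are pairwise at intersection-distance $\ell([\r,v])$ from one another. So my first step would be to construct, from $(T,\ell,\chi)$, a new pair $(\widehat T, \widehat \ell)$ by attaching, at each leaf $v$ with $2 \le \chi(v) < \infty$, a bundle of $\chi(v)$ new edges of length $\varepsilon$, producing $\chi(v)$ new leaves $v_1,\dots,v_{\chi(v)}$; for $\chi(v)=\infty$ one attaches an infinite regular subtree at $v$. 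The standard (no-capacity) factorial sequence of $(\widehat T, \widehat\ell)$ should then be shown to agree with the $\chi$-weighted sequence of $(T,\ell)$.

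The core of the argument is a bookkeeping lemma comparing the two greedy processes. The point is that any greedy choice in the capacitated process that selects the leaf $v$ for the $k$-th time (with $k \le \chi(v)$) corresponds to selecting the unused branch-leaf $v_k$ in $\widehat T$; the capacity constraint ``$\rho$ appears fewer than $\chi(\rho)$ times'' translates precisely into the uncapacitated constraint that a given leaf of $\widehat T$ be chosen at most once. The intersection pairings match up to controlled error: for the first $\chi(v)$ selections at the bundle, $\langle v_i, v_j\rangle = \ell([\r,v])$ whenever $i\neq j$, which is exactly the pairing $\langle v,v\rangle$ used when the same leaf $v$ is reselected in the capacitated process. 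I would then let $\varepsilon \to 0$ (or choose $\varepsilon$ small enough relative to the finitely many length-sums relevant to computing $a_0,\dots,a_n$) so that the spurious contributions of the attached $\varepsilon$-edges do not alter which element minimizes the running sum, nor the limiting values $a_n$. Since Theorem~\ref{thm:main1-intro} already guarantees that the sequence for $(\widehat T,\widehat\ell)$ depends only on its metric realization $\widehat\Gamma$, and since $\widehat\Gamma$ is determined by $(\Gamma,\chi)$, the desired invariance of $\{a_n\}$ follows.

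An alternative and perhaps cleaner route, which I would keep in reserve, is to mirror the proof strategy announced for Theorem~\ref{thm:main1-intro}: give a direct \emph{local} definition of the capacitated sequence via the weighting process in the tree, prove by induction on $n$ that this local sequence is well-defined and depends only on $(\Gamma,\chi)$, and then verify its equivalence with the greedy definition~\eqref{def:fac-intro}. In this approach the capacity function simply caps the number of units of mass a leaf may absorb in the local weighting, so the inductive invariance argument for the uncapacitated case carries over with the single modification that a saturated leaf is removed from consideration.

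The main obstacle in either approach is handling leaves of \emph{infinite} capacity and the interaction between the greedy minimization and non-uniqueness of the minimizer. In the reduction approach, attaching an infinite subtree at an $\infty$-capacity leaf must be done so that the attached boundary supplies an unbounded supply of choices whose pairwise intersections with the rest of the tree equal $\ell([\r,v])$, exactly as repeated selection of $v$ would give; ensuring that these do not accidentally become the unique minimizers too early requires care with the $\varepsilon$-parameter. More fundamentally, since the greedy procedure permits arbitrary choices among tied minimizers, the heart of the matter is again showing that \emph{every} admissible sequence of choices yields the \emph{same} sequence $\{a_n\}$; I expect this tie-breaking independence, rather than the capacity bookkeeping itself, to be where the real work lies, and I would isolate it as a lemma asserting that the multiset of running-sum increments is independent of the choices made.
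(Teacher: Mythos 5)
Your main route has a genuine gap: it is circular with respect to the paper's logical structure. You treat Theorem~\ref{thm:main1-intro} as ``already established,'' but in the paper the order of deduction is exactly the reverse: Theorem~\ref{thm:main1bis-intro} is proved \emph{first}, by introducing the local weighting process and showing by induction (Theorem~\ref{thm:main1}, property $\mathscr P_n$, with the multiset analysis of the subtrees at the root) that its output depends only on $(T,\ell,\chi)$, and then identifying the greedy sequence of~\eqref{def:fac-intro} with the weighting sequence; Theorem~\ref{thm:main1-intro} is afterwards deduced from Theorem~\ref{thm:main1bis-intro} together with Proposition~\ref{prop:reduction}. Even restricted to locally finite trees, Theorem~\ref{thm:main1-intro} is precisely the standard-capacity case of Theorem~\ref{thm:main1bis-intro}. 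So your reduction converts one instance of the statement into another instance of the same statement: the entire mathematical content --- that \emph{every} admissible sequence of greedy choices, with arbitrary tie-breaking, produces the same numbers $a_n$ --- sits inside the theorem you are invoking. You acknowledge in your last paragraph that this tie-breaking independence ``is where the real work lies'' and that you would isolate it as a lemma; that lemma \emph{is} the theorem, and the paper proves it by the weighting-process induction which you keep only ``in reserve'' as a three-sentence sketch.

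Two further points on the gadget itself. For finite capacities your construction is sound, and in fact cleaner than you make it: no $\varepsilon\to 0$ argument is needed, because a pendant edge never lies in the intersection of two \emph{distinct} elements of the extended boundary, so all relevant pairings agree exactly, independently of $\varepsilon$; this is essentially the construction $T_1$ the paper records in Section~\ref{sec:reduction}, attaching $\chi(v)$ disjoint infinite paths at each leaf $v$. For $\chi(v)=\infty$, however, attaching an ``infinite regular subtree'' is wrong: two boundary rays of such a subtree share edges beyond $[\r,v]$, so their pairing exceeds $\ell([\r,v])$ and does not simulate repeated selection of $v$. The correct gadget is countably many pairwise disjoint infinite paths at $v$; but this makes the enlarged tree non-locally-finite, and Theorem~\ref{thm:main1-intro} for non-locally-finite trees is obtained in the paper by reducing \emph{back} (Proposition~\ref{prop:reduction}) to a locally finite tree with a capacity-$\infty$ leaf --- which closes the circle completely.
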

We call $a_n$ the $(\Gamma,\chi)$ or $(T, \ell, \chi)$-factorial of $n$, and denote it by $n!_{(T, \ell,\chi)} = n!_{(\Gamma, \chi)}$. When $\ell$ is the standard length function, we simply write $n!_{(T, \chi)}$.

\medskip

Let $S$ be a subset of the valuation ring $R$ of a local field $K$. Let $h\in \mathbb N$. In the adelic tree $T_S$ of $S$ consider the subtree $T_{S, h}$ of all the vertices at distance at most $h$ from the root $\r$ of $T_S$. Define the capacity function $\chi_h$ on leaves of $T_{S, h}$ as follows. For any leaf $v$ of $T_{S, h}$, consider the subtree $T_{S, v}$ of $T$ which consists of $v$ and all its descendants, and define $\chi_h(v)$ as the number of elements in the extended boundary of $T_{S, v}$. We have the following  proposition.
\begin{prop} 
Notations as above, we have $n!_{(T, \chi_h)} = n!_{S, h}$
\end{prop}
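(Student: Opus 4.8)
The plan is to show that truncating the adelic tree at height $h$ and recording, via the capacity function $\chi_h$, exactly how many boundary points of $T_S$ lie below each leaf of $T_{S,h}$ produces a factorial sequence identical to the one obtained by running Bhargava's greedy procedure on $S$ but stopping each intersection count at level $h$. So first I would unwind the two definitions and identify the object they share. In the adelic tree $T_S$ with standard length function, the intersection pairing $\langle \rho, \tau\rangle$ between two rays equals the largest $h$ such that $\phi_h(s)=\phi_h(t)$ for representatives $s,t$ of $\rho,\tau$; equivalently, $\langle \rho,\tau\rangle = \mathrm{val}(s-t)$ capped at nothing—it is exactly $\mathrm{val}(s-t)$ when the two residues agree and differ at level $\langle\rho,\tau\rangle+1$. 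In the truncated tree $T_{S,h}$, any path to a leaf $v$ followed by a choice dictated by the capacity records the same pairing but saturated at $h$: two elements sharing a leaf of $T_{S,h}$ contribute $\min(\mathrm{val}(s-t),\,h)$ to the pairing on $T_{S,h}$.

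Second, I would make precise the bijection between admissible greedy sequences. A choice of $\rho_0,\dots,\rho_{n-1}$ in $\widetilde\partial(T_{S,h},\chi_h)$ respecting the capacities is, by the definition of $\chi_h$ as the number of extended-boundary points below each leaf, equivalent to a choice of representatives $s_0,\dots,s_{n-1}$ in $S$ (really in $S_0$, the dense set coming from $\widetilde\partial T_S$) subject only to the constraint that no residue class modulo $\mathfrak m^{h}$ is used more often than the number of $S$-points it contains. The key identity to verify is then
\[
\sum_{j=0}^{n-1}\langle\rho_n,\rho_j\rangle_{T_{S,h}} \;=\; \mathrm{val}\Bigl(\prod_{j=0}^{n-1}(s_n-s_j)\Bigr)\ \text{capped per factor at } h,
\]
where the left side uses the pairing on the truncated tree. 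Because each factor $\mathrm{val}(s_n-s_j)$ above level $h$ is invisible both to the truncated pairing and to the level-$h$ Bhargava data $n!_{S,h}$, the two quantities being minimized at each step of the respective greedy procedures agree term by term.

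Third, I would run the two greedy procedures in parallel and conclude by induction on $n$. Assuming the partial sequences agree and produce equal partial sums through step $n-1$, the quantity each procedure minimizes at step $n$ is the same function of the same admissible domain, so a minimizing choice on one side transports to a minimizing choice on the other; by Theorem~\ref{thm:main1bis-intro} the value $a_n = n!_{(T_{S,h},\chi_h)}$ is independent of which minimizer is selected, and likewise $n!_{S,h}$ is well-defined, so the two agree. The inductive step is essentially immediate once the term-by-term identity and the capacity-versus-multiplicity dictionary are in place.

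The main obstacle I expect is bookkeeping at the boundary level $h$ itself, namely the passage between ``a leaf $v$ of $T_{S,h}$ used with multiplicity up to $\chi_h(v)$'' and ``a residue class modulo $\mathfrak m^h$ populated by exactly $\chi_h(v)$ points of $\overline S$.'' One must check that capping each pairing at $h$ genuinely matches the truncation implicit in the definition of $n!_{S,h}$—in particular that points of $S$ agreeing beyond level $h$ are correctly treated as indistinguishable by the capacity constraint rather than as distinct boundary elements—and that the density statement $S_0 \subset \overline S$ together with the invariance of factorials under closure (cited from \cite{Wood03}) lets us replace $S$ by the extended-boundary representatives without changing any of the minimized values. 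Once that dictionary is pinned down, the remainder is a direct comparison of the two minimization problems.
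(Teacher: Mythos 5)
The paper never actually proves this proposition: it is stated in the introduction as an immediate consequence of the definitions (only the analogous statement $n!_S = n!_{T_S}$ receives a two-line justification via \cite{Wood03}), so there is no argument of the author's to compare yours against. Judged on its own, your proposal is correct and is essentially the argument the paper leaves implicit. The two dictionaries you identify are exactly the content of the statement: first, on $T_{S,h}$ with the standard length function the pairing of the leaf-paths determined by $s,t\in S$ equals $\min\bigl(\val(s-t),h\bigr)$, and since this quantity depends only on the residues of $s$ and $t$ modulo $\mathfrak m^h$, the order-$h$ objective descends from elements of $S$ to leaves of $T_{S,h}$; second, a leaf $v$ can be hit by $k$ distinct elements of $S$ precisely when $k\le |S\cap B_v|$, and $|S\cap B_v| = |\overline{S}\cap B_v| = \chi_h(v)$ because $B_v$ is clopen (so $\overline S\cap B_v=\overline{S\cap B_v}$) and a finite set is closed, while in the infinite case both counts are $\infty$. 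This same local constancy of the objective disposes of the $S$ versus $S_0$ versus $\overline S$ issue without any real appeal to \cite{Wood03}. Given these two facts, transporting admissible sequences in both directions and invoking well-definedness on each side (Theorem~\ref{thm:main1bis-intro} for the tree, Bhargava's theorem for $S$) closes the induction, as you say.

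The one point you must nail down --- and you correctly flag it as the main obstacle --- is the convention in \cite{Bha09}: your capacity-versus-multiplicity dictionary presumes that the terms of an order-$h$ $P$-ordering are distinct elements of $S$, equivalently that a residue class modulo $\mathfrak m^h$ may be used no more often than the number of points of $S$ it contains. If instead one reads the definition as minimizing over all $s\in S$ with repetitions permitted, then a repeated element and a fresh element of the same class contribute the same amount $h$ to the truncated sum, so the order-$h$ factorials would depend only on $\phi_h(S)$ and would match the tree $T_{S,h}$ with all capacities equal to $\infty$, not with $\chi_h$. The two conventions genuinely differ: for $S=\{0\}\cup(1+\mathfrak m)\subset \mathbb Z_2$ and $h=1$, distinctness gives the sequence $0,0,1,2,3,\dots$ (agreeing with the tree, where $\chi_1$ assigns the class of $0$ capacity one), while unrestricted repetition gives $0,0,1,1,2,2,\dots$. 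So your proof is sound exactly under the distinct-elements reading, which is the reading the paper's definition of $\chi_h$ encodes; pinning down Bhargava's definition precisely is the genuinely missing final step, and without it the discrepancy lies in the statement itself rather than in your argument.
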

Thus, the factorials in the presence of a capacity function generalizes  factorials of order $h$ for subsets of local fields in the terminology of~\cite{Bha09}. 
\subsection{Reduction to locally finite trees} \label{sec:reduction}
Let $(T, \ell)$ be  a pair consisting of a tree $T$ and a length function $\ell$ on $T$. We define  the \emph{locally finite component} $T_0$ of $T$ as follows. Consider the set $V_0$ of all vertices $v$ of $T$ with the property that all the interior vertices of the oriented path $[\r, v]$ have bounded valence in $T$. So, for example,  if the root $\r$ has infinite valence, then $V_0$ consists of a single vertex $\r$. Define the subtree $T_0$ of $T$ as the tree induced by $T$ on  $V_0$. For any leaf of $T_0$ which is a vertex of valence infinity in $T$, define the capacity  $\chi_0(v)$ of $v$ to be infinity.  For other leaves of $T_0$, which are thus also leaves of $T$, define $\chi_0(v) =1$. Let $\ell_0$ be the restriction of $\ell$ to the edges of $T_0$. Then we have 
\begin{prop} \label{prop:reduction} Notation as above, we have for all $n$, $n!_{(T, \ell)}=n!_{(T_0, \ell_0, \chi_0)}.$
\end{prop}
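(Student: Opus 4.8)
The plan is to show that the greedy factorial-defining procedure on $(T,\ell)$ and the one on $(T_0,\ell_0,\chi_0)$ produce sequences with identical partial sums $a_n$, by exhibiting a correspondence between admissible choices in the two processes. The key observation is that the intersection pairing $\langle\,,\rangle$ on $\widetilde\partial\Gamma$ only sees the edges lying on the oriented paths, and these edges all lie in the locally finite component: indeed, if $v$ is a vertex such that the path $[\r,v]$ passes through an interior vertex of infinite valence, then $v\notin V_0$, and the first such edge leaving an infinite-valence vertex is not an edge of $T_0$. Thus for any element $\rho\in\widetilde\partial\Gamma$, the portion of $\rho$ that contributes to \emph{any} pairing $\langle\rho,\tau\rangle$ is precisely determined by where $\rho$ first meets a vertex of infinite valence, i.e.\ by the corresponding point of $\widetilde\partial\Gamma_0$.

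First I would define a projection $\pi\colon\widetilde\partial\Gamma\to\widetilde\partial\Gamma_0$ sending each $\rho$ to the path obtained by truncating $\rho$ at the first interior vertex $v$ of $\rho$ having infinite valence in $T$ (if such a vertex exists), so that $\pi(\rho)$ is the oriented path $[\r,v]$ in $T_0$, which is an element of $\widetilde\partial\Gamma_0$ ending at a leaf $v$ of $T_0$ with $\chi_0(v)=\infty$; if no such interior vertex exists, then $\rho$ already lies entirely in $T_0$ and $\pi(\rho)=\rho$. The crucial compatibility is that $\pi$ preserves the pairing: for $\rho\neq\tau$ we have $E(\rho)\cap E(\tau)\subseteq E(T_0)$, because two distinct paths must diverge at some vertex, and any shared edge precedes the divergence point and hence precedes any infinite-valence interior vertex, so $\langle\rho,\tau\rangle=\langle\pi(\rho),\pi(\tau)\rangle$. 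I would verify this carefully, distinguishing the case where $\rho$ and $\tau$ are truncated at the same infinite-valence vertex (so $\pi(\rho)=\pi(\tau)$ is a capacity-$\infty$ leaf, and the equality of pairings is immediate) from the case where they diverge earlier.

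Next I would match the two greedy processes step by step. Given a factorial-defining sequence $\rho_0,\dots,\rho_{n-1}$ for $(T,\ell)$, the images $\pi(\rho_0),\dots,\pi(\rho_{n-1})$ form a candidate sequence in $\widetilde\partial\Gamma_0$; the capacity condition in $(T_0,\ell_0,\chi_0)$ is exactly what is needed so that $\pi$ remains injective on distinct elements of $\widetilde\partial\Gamma$ unless they are truncated to the same infinite-valence leaf, in which case that leaf has capacity $\infty$ and may be repeated without bound. Conversely, any choice in $\widetilde\partial\Gamma_0$ satisfying the capacity constraint lifts to an admissible choice in $\widetilde\partial\Gamma$: a finite-capacity leaf of $T_0$ is an honest leaf of $T$ and corresponds to a unique element, while a capacity-$\infty$ leaf $v$ has infinitely many preimages in $\widetilde\partial\Gamma$ (since $v$ has infinite valence or an infinite descendant subtree), so repetitions are always realizable. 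Because $\pi$ preserves all pairings, the minimized sums $\sum_{j<n}\langle\cdot,\cdot\rangle$ agree, so the greedy minimizer at each stage in one process projects to (resp.\ lifts from) a greedy minimizer in the other, and the resulting values $a_n$ coincide. By Theorem~\ref{thm:main1bis-intro} the sequence for $(T_0,\ell_0,\chi_0)$ depends only on that data, and by Theorem~\ref{thm:main1-intro} the sequence for $(T,\ell)$ is well defined, so it suffices to exhibit one matched pair of sequences to conclude $n!_{(T,\ell)}=n!_{(T_0,\ell_0,\chi_0)}$ for all $n$.

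The main obstacle I expect is the bookkeeping around repeated elements and the capacity constraint: I must ensure that the number of elements of $\widetilde\partial\Gamma$ mapping to a given infinite-valence leaf is genuinely infinite so that the lift never runs out of admissible choices, and that finite-capacity leaves of $T_0$ never receive contributions from more than one element of $\widetilde\partial\Gamma$ beyond their allotted capacity. Establishing that $\pi$ induces a bijection between multisets of admissible selections at every stage — rather than merely preserving pairings pointwise — is the technical heart, and it is here that the precise definition of $V_0$ (requiring \emph{interior} vertices of $[\r,v]$ to have bounded valence) must be used to guarantee that truncation lands exactly on the capacity-$\infty$ leaves and nowhere else.
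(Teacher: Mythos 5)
Your overall strategy is the same as the paper's (truncate each $\rho$ at its first infinite-valence vertex to get an element of $\widetilde\partial\Gamma_0$, then match the two greedy processes), but the claim on which your whole matching rests is false. You assert that for \emph{any} two distinct $\rho,\tau\in\widetilde\partial\Gamma$ one has $E(\rho)\cap E(\tau)\subseteq E(T_0)$, "because any shared edge precedes the divergence point and hence precedes any infinite-valence interior vertex." This is a non sequitur: the divergence point can lie \emph{beyond} an infinite-valence vertex. Concretely, let $\r$ have infinite valence (so $T_0=\{\r\}$), let $u$ be a child of $\r$, and let $\rho,\tau$ both contain the edge $\r u$ and diverge at $u$. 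Then $\r u\in E(\rho)\cap E(\tau)$ but $\r u\notin E(T_0)$, and $\langle\rho,\tau\rangle\geq\ell(\r u)>0=\langle\pi(\rho),\pi(\tau)\rangle$. The same counterexample refutes your claim that when $\rho$ and $\tau$ are truncated at the same infinite-valence vertex "the equality of pairings is immediate": equality holds precisely when $\rho$ and $\tau$ take \emph{different} pending edges at that vertex, which need not happen for arbitrary pairs.

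What saves the argument — and what is the actual heart of the paper's proof — is that this bad configuration cannot occur among elements of a factorial-defining sequence: if $\rho_i$ and $\rho_j$ ($i<j$) both passed through an infinite-valence vertex $v$ using the \emph{same} pending edge $vw$, then replacing the tail of $\rho_j$ by a path through a pending edge at $v$ not used by $\rho_0,\dots,\rho_{j-1}$ (such an edge exists because $v$ has infinitely many children and only finitely many elements have been chosen) would strictly decrease $\sum_{k<j}\langle\cdot,\rho_k\rangle$, contradicting the greedy minimality of $\rho_j$. Hence distinct elements of a greedy sequence lying in $\widetilde\partial T_v$ diverge exactly at $v$, their common edges lie in $T_0$, and only then does $\langle\rho_n,\rho_j\rangle=\langle\tau_n,\tau_j\rangle$ hold for the chosen elements. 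Your proposal never makes this argument; instead it substitutes a universal pairing-preservation statement that is false, so the step-by-step correspondence between minimizers (in both the projection and the lifting direction, where the lift of a repeated capacity-$\infty$ leaf must use a fresh pending edge) has no valid justification as written. Adding this lemma, with the minimality argument above, repairs the proof and brings it in line with the paper's.
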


Let  now $T$ be a locally finite tree, $\ell$ a length function on $T$, and $\chi$ a capacity function. Define the tree $T_1$ by adding $\chi(v)$ disjoint infinite paths to any leaf $v$ of $T$, and extend $\ell$ to a length function $\ell_1$ on $T_1$ by assigning arbitrary lengths to the new edges of $T_1$. It is easy to see that for any $n$, we have $n!_{(T, \ell, \chi)} = n!_{(T_1, \ell_1)}$. 

 Therefore, in what follows, there is no restriction in assuming the tree $T$ is locally finite, and, if necessary, a capacity function $\chi$ is given. 

\subsection{Growth of the factorial sequence and equidistribution}
 Let $T$ be a locally finite rooted tree and $\ell$ be a length function on $T$. 
 Denote by $\Gamma$ the metric realization of $(T, \ell)$. 
 
We will prove that 
\[\forall m,n\in \mathbb N, \qquad (m+n)!_\Gamma \geq m!_\Gamma + n!_\Gamma. \]

Combining this with Fekete's lemma, we get the convergence of the sequence 
\[\frac 1n n!_\Gamma \rightarrow H(\Gamma).\] 
The quantity $H(\Gamma)$, that we deliberately denote by $H(T, \ell)$ as well, is an invariant of $\Gamma$ and one of our objectives in this paper will be to characterize it. 

We first describe a necessary and sufficient condition for the finiteness of $H(T,\ell)$.
\medskip

Define the \emph{conductance} $c: E(T) \rightarrow \mathbb R_+$ given by $\forall uv\in E(T)$, $c(u,v) = c(v,u) := \frac 1{\ell(uv)}$.

Consider the random walk $RW(T, \ell)$ on $T$ which starts at the root $\r$, and which has probability of going from a vertex $u$ of the tree to any of its neighbors $v$ in the tree given by  $p_{uv} := \frac{c(u,v)}{\sum_{w\sim u} c(u,w)}$.

\medskip

The following theorem relates the finiteness of the limit of logarithmic factorials to the transience of the random walk on the tree.
\begin{thm}\label{equivalence-intro} Let $T$ be an infinite locally finite rooted tree and $\ell$  a length function on $T$. Assume that the pair $(T, \ell)$ is weakly complete. The following two statements are equivalent.
\begin{itemize}
\item The random walk $RW(T, \ell)$ is transient.
\item The limit $H(T, \ell)$ is finite.
\end{itemize}
Equivalently, the random walk $RW(T, \ell)$ is recurrent if and only if $H(T, \ell) = \infty$.
\end{thm}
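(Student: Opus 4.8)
The plan is to prove the sharper quantitative statement that $H(T,\ell)$ equals the effective resistance $R_{\mathrm{eff}}(\r\leftrightarrow\partial T)$ of the network $(T,c)$ from the root to the boundary, and then to invoke the classical equivalence, due to T.\ Lyons, between transience and the finiteness of this effective resistance (equivalently, the existence of a unit flow from $\r$ to $\partial T$ of finite energy). The starting point is a combinatorial identity for the partial sums. Let $\rho_0,\rho_1,\dots$ be a factorial-defining sequence, and for $e\in E(T)$ let $m_e(n):=\#\{j<n: e\in E(\rho_j)\}$ count the first $n$ rays through $e$. Expanding the pairing edge by edge and summing over pairs gives
\begin{equation*}
\Phi(n):=\sum_{k=0}^{n-1} a_k=\sum_{0\le i<j\le n-1}\langle\rho_i,\rho_j\rangle=\sum_{e\in E(T)}\ell(e)\binom{m_e(n)}{2}.
\end{equation*}
The loads $(m_e(n))_e$ form an integral flow of total mass $n$ from $\r$ to $\partial T$, and $\Phi(n)$ is a separable convex cost of this flow. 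Rescaling, $\theta^{(n)}:=m(n)/n$ is a unit flow, and since $\binom{m}{2}/m^2\to\frac12$ for large loads while edges of load $\le 1$ contribute nothing, one has $\tfrac1{n^2}\Phi(n)=\tfrac12\,\mathcal E(\theta^{(n)})+o(1)$, where $\mathcal E(\theta)=\sum_e \ell(e)\,\theta_e^2$ is the electrical energy in the network with edge resistances $\ell(e)=1/c(e)$. By Thomson's principle the minimum of $\mathcal E$ over unit flows is exactly $R_{\mathrm{eff}}(\r\leftrightarrow\partial T)$.

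Next I would show that the greedy procedure in fact minimizes $\Phi(n)$ over \emph{all} integral flows of mass $n$, not merely the increment at each step. The crucial observation is that inserting a ray $\rho$ into a configuration with loads $m_e$ raises $\Phi$ by exactly $\sum_{e\in E(\rho)}\ell(e)\,m_e$, which is precisely the quantity $a_n$ minimizes. Because $t\mapsto\binom{t}{2}$ has increasing increments, $\Phi$ is separable convex over the integral flows, which on a tree form a base family closed under single-unit ray exchanges. For such separable convex allocation problems the marginal-allocation (greedy) algorithm is optimal: a standard exchange argument, comparing the greedy configuration with a hypothetically cheaper one and rerouting one unit of flow along the symmetric difference of two rays, contradicts the minimality of some earlier greedy choice by convexity. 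Hence $\Phi(n)=\min\bigl\{\sum_e\ell(e)\binom{m_e}{2}:(m_e)\ \text{integral flow of mass }n\bigr\}$ for every $n$.

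Combining the pieces yields $\tfrac1{n^2}\Phi(n)\to\tfrac12 R_{\mathrm{eff}}(\r\leftrightarrow\partial T)$: the lower bound is immediate from $\mathcal E(\theta^{(n)})\ge R_{\mathrm{eff}}$, while the upper bound follows by rounding a minimum-energy unit flow (which exists with finite energy precisely in the transient case) to an integral flow of mass $n$, the rounding error being negligible after division by $n^2$. On the other hand, superadditivity together with Fekete's lemma gives $a_n/n\to H$, whence $\tfrac1{n^2}\Phi(n)=\tfrac1{n^2}\sum_{k<n}a_k\to H/2$ by Ces\`aro summation. Equating the two limits gives $H=R_{\mathrm{eff}}(\r\leftrightarrow\partial T)$, and the theorem follows: the walk is transient iff this effective resistance is finite iff $H<\infty$, equivalently recurrent iff $H=\infty$.

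I expect the main obstacle to be the passage between the discrete greedy loads and the continuous energy-minimizing flow on an infinite tree. Two points require genuine care. First, establishing greedy optimality for $\Phi(n)$ rigorously, since the exchange argument must be carried out over an infinite, possibly unbounded, family of rays rather than a finite ground set. Second, controlling the relaxation limit $\tfrac1{n^2}\Phi(n)\to\tfrac12 R_{\mathrm{eff}}$, which demands showing that the greedy loads grow linearly in $n$ along the backbone carrying the finite-energy flow and that rounding the optimal flow to an integral one costs only $o(n^2)$. The weak completeness hypothesis enters exactly here, ensuring that energy-minimizing flows are carried by genuine boundary rays of $\widetilde\partial\Gamma$, so that the variational problem over flows and the greedy problem over rays have the same value.
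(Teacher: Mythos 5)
There is a genuine gap, and it sits at the center of your argument. Your pair-sum identity $\Phi(n)=\sum_{k<n}a_k=\sum_{e}\ell(e)\binom{m_e(n)}{2}$ is correct, and so is the Ces\`aro step $\frac{1}{n^2}\Phi(n)\to H/2$; but the asserted asymptotic $\frac1{n^2}\Phi(n)=\frac12\,\mathcal E(\theta^{(n)})+o(1)$ is false whenever the selected rays have infinite $\ell$-length---which is always the case for the standard length function $\ell\equiv 1$ (the arithmetic/adelic case) and more generally is not excluded by weak completeness (that hypothesis forbids infinite valence-two chains of finite length, not infinite rays of finite length). Indeed, each of the $n$ rays carries load exactly $1$ on all but finitely many of its edges; such edges contribute $0$ to $\Phi(n)$ since $\binom12=0$, but they contribute $\frac1{n^2}\sum_{e:\,m_e(n)=1}\ell(e)=+\infty$ to $\mathcal E(\theta^{(n)})$. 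So $\mathcal E(\theta^{(n)})=+\infty$ for every $n$, the inequality $\mathcal E(\theta^{(n)})\ge R_{\mathrm{eff}}$ is vacuous, and your derivation of ``$H<\infty\Rightarrow$ transient'' collapses. The repair is to drop $\mathcal E(\theta^{(n)})$ entirely: extract a pointwise subsequential limit $\phi$ of $\theta^{(n)}$ (possible since $\theta^{(n)}_e\in[0,1]$ and $T$ is locally finite; unit mass at $\r$ and conservation survive the limit), note that $\frac{m_e(n)(m_e(n)-1)}{n^2}\to\phi_e^2$ for each fixed edge, and apply Fatou to get $\|\phi\|^2\le\liminf\frac{2}{n^2}\Phi(n)=H<\infty$, which yields a finite-energy unit flow and hence transience. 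This repaired route is essentially the paper's proof of that direction, except that the paper proves genuine (not subsequential) pointwise convergence of $\widetilde\omega_n=\theta^{(n)}$ with an explicit product formula for the limit (Theorem~\ref{thm:pwconv}), and bounds its energy by $H$ via the telescoping estimate $F(v)\le H(T,\ell)$ and the identity $\|\phi\|^2=\int_{\partial T}F\,d\mu_\phi$.

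The converse direction of your scheme rests on the claim that the greedy procedure minimizes $\Phi(n)$ over \emph{all} integral flows of mass $n$, which you dispatch as ``a standard exchange argument.'' This is not a routine step: it is the tree generalization of Bhargava's theorem that $P$-orderings minimize $\val\bigl(\prod_{i<j}(a_i-a_j)\bigr)$ over all $(n+1)$-tuples, i.e., a statement of comparable depth to Theorem~\ref{thm:main1-intro} itself, and on an infinite tree the rerouting argument requires decomposing the difference of two flows with infinite support and a termination analysis; the paper neither states nor uses any such global optimality. Instead, the paper's Proposition~\ref{ub-growth} proves ``transient $\Rightarrow H\le\|\eta\|^2$'' by a local domination argument: at every stage $n$ there is an \emph{unsaturated} vertex $v$ with $\widetilde\omega_n(e)\le\eta(e)$ on every edge of $[\r,v]$, so the greedy increment satisfies $a_n\le n\sum_{e\in[\r,v]}\eta(e)\ell(e)$, and Proposition~\ref{prop:height} (the $\mu_{\mathrm{har}}$-a.s.\ constancy of the $\ell_\eta$-length of rays, equal to $\|\eta\|^2$) bounds the right-hand side by $n\|\eta\|^2$; weak completeness is what makes the strict paths traversed in that construction finite. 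If you were to actually prove greedy optimality and carry out the rounding (sampling $n$ i.i.d.\ rays from harmonic measure gives expected cost exactly $\binom n2\|\eta\|^2$, and finiteness of $\|\eta\|^2$ forces $\mu_{\mathrm{har}}$ to be atomless, so the sampled rays are a.s.\ distinct), you would obtain a genuinely different and quantitatively appealing proof; but as written, one pillar of your argument is false and the other is unproven.
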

The condition that $(T, \ell)$ is weakly complete means any infinite oriented path $P$ in $T$ which entirely consists of valence two vertices has to be of infinite length in $\Gamma$. In particular, this is the case if the length function is $\epsilon$-away from zero for some $\epsilon>0$. We refer to Section~\ref{sec:growth} for more details. 
 
 \medskip
 
In the presence of a capacity function $\chi$  on the leaves of $T$, the normalized factorials $\frac 1n n!_{(\Gamma, \chi)}$ still converge to a parameter $H(\Gamma, \chi)$, and the theorem above still holds if the values of $\chi$ are all finite as can be easily observed by the transformation $(T_1, \ell_1)$ of $(T, \ell, \chi)$ described in the previous section.  (Indeed, in this case, we will always have 
$H(T, \ell) = H(T, \ell, \chi)$.)
On the other hand, when $\chi$ takes value $\infty$ at some leaves of $T$, then the value of $H(T, \ell, \chi)$ is always finite. 

\medskip

We now turn to the question of determining the value of $H(T, \ell)$. By the previous theorem, we can assume that the random walk $RW(T, \ell)$ is transient. We have the following.

\begin{thm} \label{thm:main2-intro} Let $T$ be a locally finite tree and $\ell$ a length function on $T$ so that the random walk $RW(T, \ell)$ on $T$ is transient. Let $\eta$ be a the unit current flow on $T$ and $\mu_{\mathrm{har}}$ the corresponding harmonic measure on $\partial T$. Assume that $(T, \ell)$ is weakly complete. Then,
\begin{itemize}
\item any factorial determining sequence $\rho_0, \rho_1, \dots$ of $(T, \ell)$ is equidistributed in $\partial T$ with respect to the harmonic measure $\mu_{\mathrm{har}}$.
\item we have $H(T, \ell) = \|\eta\|^2$, where $\|\eta\|^2$ is the energy of the unit current flow $\eta$ on $T$. 
\end{itemize}
\end{thm}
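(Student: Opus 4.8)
The plan is to prove both assertions of Theorem~\ref{thm:main2-intro} together, by relating the greedy factorial construction to the electrical-network structure of the transient tree. First I would set up the dictionary between the length function $\ell$ and the electrical network: with conductances $c(e)=1/\ell(e)$, transience means the effective resistance from $\r$ to $\infty$ is finite, so there is a well-defined unit current flow $\eta$ (the minimizer of energy $\|\eta\|^2=\sum_e \ell(e)\,\eta(e)^2$ among unit flows from $\r$ to $\partial T$), and the harmonic measure $\mu_{\mathrm{har}}$ on $\partial T$ is the exit measure of the random walk, equivalently the measure whose value on the cylinder $\partial T_v$ (boundary of the subtree below $v$) equals the current $\eta(e_v)$ through the edge $e_v$ entering $v$. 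The key reinterpretation is that $\langle\rho,\tau\rangle=\ell(\rho\cap\tau)=\sum_{e\in E(\rho)\cap E(\tau)}\ell(e)$, so for a sample $\rho_0,\dots,\rho_{n-1}$ the partial sums $\sum_j\langle\rho,\rho_j\rangle$ count, edge by edge, the total $\ell$-length along $\rho$ weighted by how many of the previously chosen rays pass through each edge. Writing $N_e^{(n)}$ for the number of indices $j<n$ with $e\in E(\rho_j)$, the greedy step chooses $\rho_n$ to minimize $\sum_{e\in E(\rho)}\ell(e)\,N_e^{(n)}$, and $a_n$ is the value of that minimum.

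Next I would exploit this edge-counting description to identify the limiting empirical distribution. The greedy rule is a load-balancing procedure: it always routes the next ray along the path of least accumulated weighted load, so in the limit the fraction of rays passing through each edge must equalize the marginal cost per edge. I would argue that the empirical measures $\nu_n=\frac1n\sum_{j<n}\delta_{\rho_j}$ on $\partial T$ converge weakly to the unique measure $\mu$ that minimizes the energy-type functional $\iint \langle\rho,\tau\rangle\,d\mu(\rho)\,d\mu(\tau)=\sum_e\ell(e)\,\mu(\partial T_e)^2$ subject to $\mu$ being a probability measure, since the greedy minimization is precisely a discrete gradient-descent / competitive-equilibrium scheme for this quadratic functional. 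The minimizer of $\sum_e\ell(e)\,\mu(\partial T_e)^2$ over probability measures is exactly the measure whose edge-masses $\mu(\partial T_e)$ form the unit current flow, i.e. $\mu=\mu_{\mathrm{har}}$; this is the standard variational characterization of the current flow as the minimal-energy unit flow. That identification gives the first bullet, equidistribution with respect to $\mu_{\mathrm{har}}$.

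For the second bullet I would relate $H(T,\ell)=\lim \frac1n n!_\Gamma=\lim\frac1n a_n$ to this same functional. The superadditivity $(m+n)!_\Gamma\ge m!_\Gamma+n!_\Gamma$ already stated in the excerpt, together with Fekete's lemma, guarantees the limit exists; I would then show it equals the minimal energy. Summing the greedy increments telescopes: $\sum_{n<N}a_n=\sum_{n<N}\sum_{e\in E(\rho_n)}\ell(e)\,N_e^{(n)}=\sum_e\ell(e)\binom{N_e^{(N)}}{2}$, so $\frac{1}{N^2}\sum_{n<N}a_n\to\frac12\sum_e\ell(e)\,\mu_{\mathrm{har}}(\partial T_e)^2=\tfrac12\|\eta\|^2$ using the edge-mass identity $N_e^{(N)}/N\to\mu_{\mathrm{har}}(\partial T_e)$ from the first part. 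On the other hand, by Cesàro/Abel summation the same average equals $\frac{1}{N^2}\sum_{n<N}a_n\to\tfrac12\lim\frac{a_n}{n}=\tfrac12 H(T,\ell)$ once one knows $a_n/n$ converges, which it does by Fekete. Comparing the two evaluations yields $H(T,\ell)=\|\eta\|^2$.

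The main obstacle is the convergence $\nu_n\Rightarrow\mu_{\mathrm{har}}$ together with the edge-mass convergence $N_e^{(N)}/N\to\mu_{\mathrm{har}}(\partial T_e)$, made uniform enough across the infinitely many edges to pass to the limit inside the sums over $E(T)$. The greedy rule only gives an approximate, myopic equilibrium, so I must control the discrepancy between the greedy load distribution and the true energy-minimizer at every scale; this is where weak completeness enters, ruling out mass escaping to infinity along infinite chains of valence-two vertices of finite total length and guaranteeing the cylinder sets $\partial T_e$ genuinely capture the limiting measure. I expect the heart of the argument to be a quantitative lemma showing that after $n$ greedy steps the accumulated weighted loads $\ell(e)N_e^{(n)}$ along competing edges differ by $O(1)$ (not growing with $n$), which forces the marginal costs to equalize in the limit and simultaneously controls the truncation error uniformly — this is precisely the estimate that turns the myopic greedy choice into the global energy-minimizing harmonic measure.
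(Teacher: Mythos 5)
Your bookkeeping is correct as far as it goes: with $N_e^{(n)}$ denoting the number of previously chosen rays through $e$, the greedy step does minimize $\sum_{e\in E(\rho)}\ell(e)N_e^{(n)}$, the telescoping identity $\sum_{n<N}a_n=\sum_e\ell(e)\binom{N_e^{(N)}}{2}$ is valid, and Ces\`aro summation converts $a_n/n\to H$ into $\frac{2}{N^2}\sum_{n<N}a_n\to H$. But the entire content of the theorem lies in the step you assert rather than prove: the convergence $N_e^{(N)}/N\to\eta(e)$ (equivalently, that the normalized weights $\widetilde\omega_n$ converge pointwise to the unit current flow). Declaring that the greedy rule is ``a discrete gradient-descent / competitive-equilibrium scheme'' for the quadratic energy, and therefore converges to its minimizer, is a heuristic, not an argument --- myopic greedy schemes do not in general locate global minimizers --- and there is a concrete reason it cannot be completed in the form you state it: your heuristic makes no essential use of weak completeness, yet Remark~\ref{rem:ex} of the paper exhibits a transient pair $(T,\ell)$ (failing weak completeness) for which the greedy empirical flow does \emph{not} converge to $\eta$; any argument of your shape would ``prove'' the false statement there as well. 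The quantitative lemma you propose to fill the gap --- that accumulated loads $\ell(e)N_e^{(n)}$ on competing edges differ by $O(1)$ --- is neither precisely formulated nor substantiated, and it is doubtful: what the recursive structure of the construction actually gives (the paper's inequality~\eqref{eq2}) is that competing minimal path-loads differ by at most one increment of a subtree factorial sequence plus an edge length, and such increments are controlled only on average via Fekete's lemma, not individually, even when $H<\infty$. So the first bullet is unproven, and the second falls with it.

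There is a secondary unresolved gap even granting equidistribution edge by edge: your identity $\frac1{N^2}\sum_{n<N}a_n\to\frac12\sum_e\ell(e)\eta(e)^2$ requires interchanging a limit with an infinite sum over $E(T)$; Fatou's lemma only yields $H\ge\|\eta\|^2$, and the domination needed for the reverse inequality is flagged but never supplied. It is instructive to see how the paper sidesteps both difficulties with a two-sided sandwich instead of an equilibrium argument: Proposition~\ref{ub-growth} shows that at each stage the greedy path from the root is dominated edgewise by \emph{any} unit flow of finite energy, which combined with Proposition~\ref{prop:height} gives the upper bound $H(T,\ell)\le\|\eta\|^2$ outright (Corollary~\ref{cor:ub}); Theorem~\ref{thm:pwconv} then proves pointwise convergence $\widetilde\omega_n\to\phi$ by induction on generations using the recursive merge structure of the factorial sequence (needing only $H<\infty$); a telescoping-product estimate gives $\|\phi\|^2\le H(T,\ell)$; and finally minimality and uniqueness of $\eta$ force $\phi=\eta$ and $H(T,\ell)=\|\eta\|^2$ simultaneously, with no uniform-in-$e$ convergence ever required. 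Some version of this two-sided argument, with weak completeness entering quantitatively as in Proposition~\ref{ub-growth}, is what your outline is missing.
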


As an immediate corollary, we get the following equidistribution theorem for factorial-determining sequences of subsets  of local fields. 
Let us call a subset $S$ of the valuation ring $R$ of a local field $K$ \emph{transient} if the adelic tree $T_S$ of $S$ is transient. For a transient subset $S$ of $R$, we denote by $\mu_{\mathrm{har}}$ the corresponding harmonic measure of $S$ which has support in the closure $\overline S$ of $S$ in $R$. We have

\begin{thm} Let $K$ be a local field with valuation ring $R$, and let $S$ be an infinite subset of $R$. The following two conditions are equivalent.
\begin{itemize}
\item The subset $S$ of $R$ is transient.
\item The sequence $\frac 1n n!_{S}$ converges to a finite $H(S) \in (0,\infty)$.
\end{itemize}
Moreover, for a transient subset $S$ of $R$, any factorial determining sequence $s_0,s_1, s_2, \dots$  of $S$ is equidistributed in $\overline S$ with respect to the harmonic measure, and we have 
\begin{align*}
H(S) &= \int_{\substack{(x,y) \in \overline S \times \overline S\\ x\neq y}}\val(x-y) d\mu_{\mathrm{har}}(x)d\mu_{\mathrm{har}}(y) \\
&=\int_{\overline S} \mathrm{val}(x_0-y) d\mu_{\mathrm{har}}(y) \qquad \textrm{a.s. for $x_0\in \overline S$}.
\end{align*}
 
\end{thm}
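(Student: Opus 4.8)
The plan is to deduce all three assertions from Theorem~\ref{thm:main2-intro} by translating the statements about the adelic tree $T_S$ into the arithmetic of $\overline S$. First I would fix the dictionary. Equip $T_S$ with the standard length function $\ell\equiv 1$; since $\ell$ is bounded away from zero, $(T_S,\ell)$ is weakly complete, so both Theorem~\ref{equivalence-intro} and Theorem~\ref{thm:main2-intro} apply. By the proposition identifying $n!_S$ with $n!_{T_S}$, together with the density of $\widetilde\partial T_S$ in $\overline S$ (so that the factorials of $S_0$, $\overline S$ and $S$ agree), it suffices to prove everything on the tree. The crucial point is that the intersection pairing becomes the valuation: if $\rho_x,\rho_y\in\widetilde\partial T_S$ are the rays attached to $x,y\in\overline S$, then $\rho_x$ and $\rho_y$ share exactly the first $\val(x-y)$ edges, whence $\langle\rho_x,\rho_y\rangle=\val(x-y)$.

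With the dictionary in place, the equivalence of the two conditions is immediate: by definition $S$ is transient iff $T_S$ is transient, which by Theorem~\ref{equivalence-intro} is equivalent to $H(T_S)=H(S)$ being finite, while the convergence $\tfrac1n n!_S\to H(S)$ itself comes from the superadditivity of the factorials and Fekete's lemma. Finiteness gives $H(S)<\infty$, and positivity $H(S)>0$ follows because in the transient case the unit current flow $\eta$ is nonzero and hence has strictly positive energy $\|\eta\|^2=H(S)$. Equidistribution transfers verbatim: a factorial-determining sequence $s_0,s_1,\dots$ of $S$ corresponds under $x\mapsto\rho_x$ to a factorial-determining sequence of $(T_S,\ell)$, which by Theorem~\ref{thm:main2-intro} equidistributes with respect to $\mu_{\mathrm{har}}$ on $\partial T_S$; pushing this forward along the identification $\partial T_S\cong\overline S$ yields equidistribution in $\overline S$.

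For the double-integral formula I would expand the energy edge by edge. For an oriented edge $e$ of $T_S$ let $\partial T_e\subset\partial T_S$ be the set of rays passing through $e$; the basic tree identity is that the harmonic measure of this shadow equals the current, $\mu_{\mathrm{har}}(\partial T_e)=\eta(e)$. Before integrating I would record that $\mu_{\mathrm{har}}$ is non-atomic on $\partial T_S$: an atom of mass $m$ at an infinite ray $\rho_0$ would force $\mu_{\mathrm{har}}(\partial T_e)\ge m$ for all $e\in\rho_0$, giving energy at least $m^2\sum_{e\in\rho_0}\ell(e)=m^2\,\ell(\rho_0)=\infty$ by weak completeness, contradicting $\|\eta\|^2<\infty$; so the diagonal is $\mu_{\mathrm{har}}$-null. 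Since $\langle\rho,\tau\rangle=\sum_e\ell(e)\,\mathbf 1[e\in\rho]\,\mathbf 1[e\in\tau]$, Tonelli's theorem (all terms nonnegative) then gives
\[
\int\!\!\int_{\rho\neq\tau}\langle\rho,\tau\rangle\,d\mu_{\mathrm{har}}(\rho)\,d\mu_{\mathrm{har}}(\tau)=\sum_e\ell(e)\,\mu_{\mathrm{har}}(\partial T_e)^2=\sum_e\ell(e)\,\eta(e)^2=\|\eta\|^2=H(S),
\]
which under the dictionary is exactly $\int\!\int_{x\neq y}\val(x-y)\,d\mu_{\mathrm{har}}(x)\,d\mu_{\mathrm{har}}(y)$.

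Finally, for the almost-sure identity I would show that the potential $U(\rho):=\int\langle\rho,\tau\rangle\,d\mu_{\mathrm{har}}(\tau)=\sum_{e\in\rho}\ell(e)\,\eta(e)$ equals $H(S)$ for $\mu_{\mathrm{har}}$-a.e.\ $\rho$. Writing $v$ for the voltage of the unit current flow (harmonic off the root, grounded at infinity, with $v(\r)=\|\eta\|^2$), Ohm's law $v(u)-v(w)=\ell(e)\,\eta(e)$ across each edge $e=uw$ telescopes along a ray to give $U(\rho)=v(\r)-\lim v|_\rho$. The main obstacle is precisely to prove that this limit vanishes for $\mu_{\mathrm{har}}$-almost every ray, i.e.\ that the voltage (equivalently the normalized Green function $G(\r,\cdot)$) tends to $0$ along harmonic-measure-almost-every boundary ray; this is the genuine potential-theoretic input, which I expect to obtain from transience together with the fact that $\mu_{\mathrm{har}}$ is the exit measure of the escaping random walk $RW(T_S,\ell)$. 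Granting it, $U(\rho)=v(\r)=\|\eta\|^2=H(S)$ a.e., which is the stated a.s.\ formula $H(S)=\int_{\overline S}\val(x_0-y)\,d\mu_{\mathrm{har}}(y)$; integrating this identity over $x_0$ and comparing with the previous paragraph (Fubini, using that $\mu_{\mathrm{har}}$ is a probability measure) confirms consistency.
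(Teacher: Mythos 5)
Your proposal is correct and follows essentially the same route as the paper: the paper presents this theorem as an immediate corollary of Theorem~\ref{equivalence-intro} and Theorem~\ref{thm:main2-intro} via the adelic-tree dictionary $\langle\rho_x,\rho_y\rangle=\val(x-y)$, which is exactly your reduction, and the integral formulas you derive (energy expanded edge-by-edge via Tonelli, and the telescoped potential $U(\rho)=\sum_{e\in\rho}\ell(e)\eta(e)$) are the computations the paper leaves implicit. The one step you flag as an unproven ``potential-theoretic input'' --- that $\sum_{e\in\rho}\ell(e)\eta(e)=\|\eta\|^2$ for $\mu_{\mathrm{har}}$-a.e.\ ray --- is precisely the paper's Proposition~\ref{prop:height} (quoted from Lyons and Lyons--Peres), so no gap remains once you cite it; note only that in your non-atomicity argument the claim $\ell(\rho_0)=\infty$ does not follow from weak completeness for general length functions (a weakly complete tree can have rays of finite length through infinitely many branching vertices), but it holds trivially here because the adelic tree carries the standard length $\ell\equiv 1$.
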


We note that in the presence of a capacity function on the leaves of $T$ which takes values infinity, the limit $H(T, \ell, \chi)$ has a similar expression. Indeed, it will be enough to consider the modified tree $T_1$ obtained by adding a countable number of paths to any leaf $v$ of $T$ with $\chi(v) =\infty$, and define the conductance of all these new edges to be equal to $\infty$. The random walk on $T_1$ with these  conductances is equivalent to a random walk on $T$ with absorption on  the leaves of capacity infinity, and the limit $H(T, \ell, \chi)$ is the squared norm of the unit current flow on $T_1$.  For the special case where $T$ is a finite tree and $\chi$ is a function on the leaves of $T$ which takes value infinity at some points of $T$, we have the following explicit way of calculating  $H(T,\ell, \chi)$. 

Let $L_0 \subset L(T)$ be the set of all leaves $v$ with $\chi(v)=\infty$, and define the connected graph $G = (V, E)$ obtained by identifying all the vertices in $L_0$ to a single vertex $\mathfrak s$. Let $C^0(G, \mathbb R)$ be the space of real valued functions on the vertices of $G$. The length function $\ell$ induces a length function on the edges of $G$,  to which we can associate a Laplcian operators $\Delta: C^0(G, \mathbb R) \to C^0(G, \mathbb R)$ as follows. For any function $f \in C^0(G, \mathbb R)$, the value of $\Delta(f) \in C^0(G, \mathbb R)$ at a vertex $v$ of $V(G)$ is given by
\[\Delta(f)(v) = \sum_{\{u,v\} \in E(G)} \frac 1{\ell_e} \Bigl(f(u) - f(v)\Bigr).
\] 
For a vertex $v$ of $G$, denote by ${\bf 1}_v$ the characteric function of $v$ which takes value one at $v$, and value zero outside $v$. Let $F$ be the real-valued function on $V$ which solves the Laplace equation $\Delta(F) = {\bf 1}_\r - {\bf 1}_{\mathfrak s}.$ By connectivity of $G$, up to additioning a constant function, $F$ is unique. 
\begin{thm}\label{thm:finite} Notations as above, we have $H(T, \ell, \chi) = F(\mathfrak s) - F(\r). $
\end{thm}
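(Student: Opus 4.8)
The plan is to identify $H(T, \ell, \chi)$ with the energy of a unit current flow in the finite network $G$, and then to evaluate that energy through the potential $F$ by a discrete Green's identity. Throughout I write $c_{uv} = 1/\ell_e$ for the conductance of an edge $e = \{u,v\}$, so that the operator $\Delta$ of the statement is $\Delta = -L$, where $L$ is the usual positive semidefinite network Laplacian $L(f)(v) = \sum_{u} c_{uv}(f(v) - f(u))$.

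First I would reduce to the finite network $G$. Following the remark preceding the statement, attach to each leaf $v \in L_0$ a countable family of paths whose edges have infinite conductance, and call the resulting infinite tree $T_1$; then $H(T, \ell, \chi) = \|\eta_1\|^2$, the squared norm of the unit current flow $\eta_1$ on $T_1$, by Theorem~\ref{thm:main2-intro}. Since the appended edges have zero resistance they carry current at no energy cost and pin every leaf of $L_0$, together with the whole attached forest, to a single common potential. Consequently $\eta_1$ descends to the finite network $G$ obtained by collapsing $L_0$ to the vertex $\mathfrak{s}$: the descended flow $\eta$ has total strength one between $\r$ and $\mathfrak{s}$ and obeys Kirchhoff's node law at every interior vertex, with $\|\eta\|^2 = \|\eta_1\|^2 = H(T, \ell, \chi)$. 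Thus it suffices to prove that the energy of the unit current flow between $\r$ and $\mathfrak{s}$ in $G$ equals $F(\mathfrak{s}) - F(\r)$.

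The computation in $G$ is then classical. Define a flow by Ohm's law, $\theta_{uv} := c_{uv}(F(u) - F(v))$; the Laplace equation $\Delta F = \mathbf{1}_\r - \mathbf{1}_{\mathfrak{s}}$ says exactly that the net flow out of every vertex outside $\{\r, \mathfrak{s}\}$ vanishes while there is a source and a sink of unit strength at $\r$ and $\mathfrak{s}$, so $\theta$ is a unit flow between $\r$ and $\mathfrak{s}$. By uniqueness of the unit current flow in a finite connected network (Thomson's principle), $\theta$ agrees with $\eta$ up to sign, whence $\|\eta\|^2 = \frac{1}{2}\sum_{\{u,v\} \in E(G)} c_{uv}(F(u) - F(v))^2$. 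Expanding this Dirichlet form and using the symmetry $c_{uv} = c_{vu}$ yields the discrete Green identity $\frac{1}{2}\sum_{\{u,v\}} c_{uv}(F(u) - F(v))^2 = -\sum_{v} F(v)\,\Delta F(v) = -\langle F, \mathbf{1}_\r - \mathbf{1}_{\mathfrak{s}}\rangle = F(\mathfrak{s}) - F(\r)$, which is the desired value. The additive ambiguity in $F$ is harmless, since $\mathbf{1}_\r - \mathbf{1}_{\mathfrak{s}}$ is orthogonal to the constant functions.

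The only genuinely delicate point is the reduction in the first step: one must check that passing to the infinite-conductance model $T_1$ and then collapsing $L_0$ preserves the current flow and its energy \emph{exactly}, i.e. that the appended infinite subtrees contribute nothing to the energy and force the capacity-$\infty$ leaves to a common potential, so that $\eta_1$ and the finite-network flow $\eta$ genuinely coincide off the appended edges. Once this identification is secured, the remaining steps are the standard finite-network facts—uniqueness of the unit current flow and the Green identity—and no further analytic subtlety arises, because $G$ is a finite connected network.
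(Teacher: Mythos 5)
Your second step---the finite-network computation identifying the energy of the unit current flow between $\r$ and $\mathfrak s$ with $F(\mathfrak s)-F(\r)$ via Thomson's principle and the discrete Green identity---is correct (apart from a harmless inconsistency: if the sum runs over unordered edges $\{u,v\}\in E(G)$ there should be no factor $\tfrac12$; the factor belongs to the sum over ordered adjacent pairs). The genuine gap is in your first step, and it is not merely ``delicate'': the tool you invoke, Theorem~\ref{thm:main2-intro}, does not apply to the infinite-conductance model $T_1$. That theorem requires a length function with values in $\mathbb R_+$, a locally finite tree, and weak completeness, and $T_1$ violates all three: infinite conductance means zero length; attaching countably many paths to each leaf of $L_0$ destroys local finiteness; and, most importantly, the attached infinite paths have zero (or, under any summable positive assignment, finite) total length, so $T_1$ is not weakly complete. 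Remark~\ref{rem:ex} of the paper shows this hypothesis cannot be waived: for an infinite path of finite length, the unit current flow and the limit of the normalized weights $\widetilde\omega_n$ genuinely disagree---and that example is exactly the degenerate configuration your $T_1$ creates at every leaf of $L_0$. So the identity $H(T,\ell,\chi)=\|\eta_1\|^2$, which you defer as ``to be checked,'' is precisely the content of the theorem; everything after it is classical electrical network theory, and deferring it leaves the proof without its core.

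The way to close the gap (and what the paper means by ``the same arguments'') is to bypass $T_1$ entirely and rerun the analysis of Section~\ref{sec:growth} directly on the finite tree $T$ with absorption on $L_0$: the weighting process gives partial unit flows $\widetilde\omega_n$ on $T$; the upper bound of Proposition~\ref{ub-growth} is run against unit flows from $\r$ to $L_0$ (all of finite energy since $T$ is finite); the pointwise convergence argument of Theorem~\ref{thm:pwconv} applies with $H_v=0$ for $v\in L_0$, while branches ending only in finite-capacity leaves have $N<\infty$ and drop out of the limit; and the Green-identity lower bound of Claim~\ref{claim4.10} shows the limit flow has energy at most $H(T,\ell,\chi)$, forcing it to be the minimal-energy unit flow from $\r$ to $L_0$. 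Alternatively, and most simply for a finite tree, induct on the recursion established in Section~\ref{sec:growth}, namely $H\sum_{j}\bigl(H_{u_j}+\ell_{\r u_j}\bigr)^{-1}=1$ over the children $u_j$ whose subtrees carry infinite factorial sequences, with $H_v=0$ at leaves of infinite capacity: this is exactly the series/parallel recursion for the effective resistance between $\r$ and the identified vertex $\mathfrak s$, which equals $F(\mathfrak s)-F(\r)$. Once $H(T,\ell,\chi)$ is identified with that effective resistance, your concluding computation applies verbatim.
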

As an immediate corollary, for any $h \in \mathbb N$, we get a limit theorem for the factorials of order $h$ associated to subsets of local fields. 

\subsection{Factorial-based characterization of the branching number}
Let $T$ be a countable infinite locally finite tree. A cut-set in $T$ is a subset $C$ of vertices such that any infinite (oriented) part from root meet a vertex of $C$.  Recall that the \emph{branching number} of $T$ is defined as
\[\mathrm{br}(T) := \sup \Bigl\{ \lambda :\,\, \inf_{C\,\,\textrm{cut-set}}\sum_{v\in C}\lambda^{-|v|} > 0\,\Bigr\},\]
where for a vertex $v$, the distance of $v$ to $\r$ in the tree $T$ is denoted by $|v|$.

For any $\lambda>0$, denote by $\ell_\lambda$ the length function on $T$ which associates the length $\lambda^{|u|}$ to any oriented edge $uv \in E(T)$. 
The following classical theorem relates the transience of the simple random walk on $(T, \ell_\lambda)$ with the branching number of $T$.
\begin{thm}[R. Lyons~\cite{Lyons90}] The random walk $RW_{(T, \ell_\lambda)}$ on $T$ is transient provided that $\lambda< \mathrm{br}(T)$. For $\lambda> \br(T)$, the random walk $RW_{(T, \ell_\lambda)}$ is recurrent.
\end{thm}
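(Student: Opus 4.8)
The plan is to prove R. Lyons' theorem relating the branching number to the transience/recurrence of the simple random walk on $(T, \ell_\lambda)$. The key observation is that the conductance associated to $\ell_\lambda$ is $c(u,v) = \lambda^{-|u|}$ for an edge $uv$ oriented away from the root, so the edge energy carried by a flow is weighted by $\lambda^{|u|}$. The strategy is to translate both statements into the variational characterization of transience provided by electrical network theory: the random walk is transient if and only if the effective resistance from the root to infinity is finite, equivalently if and only if there exists a unit flow from $\r$ to $\partial T$ of finite energy (Thomson's principle), equivalently if and only if there exists a nonzero flow to infinity at all, by the Nash-Williams / Royden cut-set criterion. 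I would match these electrical quantities against the defining infimum over cut-sets in $\mathrm{br}(T)$.

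First I would set up the electrical network $(T, c_\lambda)$ with $c_\lambda(u,v) = \lambda^{-|u|}$, and recall Thomson's principle: the effective resistance satisfies $\mathcal R_{\mathrm{eff}}(\r \leftrightarrow \partial T) = \inf_\theta \mathcal E(\theta)$, where the infimum is over unit flows $\theta$ from $\r$ to infinity and $\mathcal E(\theta) = \sum_{e} r_e\,\theta(e)^2$ with $r_e = \ell_\lambda(e) = \lambda^{|u|}$. Transience is equivalent to this being finite. Second, for the transience direction ($\lambda < \mathrm{br}(T)$), I would construct an explicit finite-energy flow. Pick $\lambda < \lambda' < \mathrm{br}(T)$; by definition of the branching number there is a constant $\delta>0$ with $\sum_{v\in C}(\lambda')^{-|v|} \ge \delta$ for every cut-set $C$, and the max-flow–min-cut duality (the infinite analogue, via compactness of the space of flows) produces a unit flow $\theta$ whose value across each antichain is controlled, so that $|\theta(e)| \lesssim (\lambda')^{-|u|}$ on edge $e = uv$. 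Its energy is then dominated by $\sum_{u} \lambda^{|u|}(\lambda')^{-2|u|}(\text{number of vertices at depth }|u|)$, and the ratio $\lambda/\lambda'<1$ forces geometric decay summing to a finite value; hence transience.

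For the recurrence direction ($\lambda > \mathrm{br}(T)$), I would use the cut-set (Nash-Williams) criterion: if for every $\epsilon>0$ there is a cut-set $C$ with $\sum_{v\in C}\lambda^{-|v|}$ small, then one can extract a nested sequence of disjoint cut-sets $C_1, C_2, \dots$ each with small total $\lambda^{-|v|}$-weight, and the Nash-Williams inequality $\mathcal R_{\mathrm{eff}}(\r\leftrightarrow\partial T) \ge \sum_k \bigl(\sum_{e\in C_k} c_\lambda(e)\bigr)^{-1}$ diverges, giving infinite effective resistance and hence recurrence. Here I would need the standard fact that $\lambda > \mathrm{br}(T)$ means the infimum over cut-sets of $\sum_{v\in C}\lambda^{-|v|}$ is zero, so cut-sets of arbitrarily small weight exist, and a pigeonhole/nesting argument extracts disjoint ones.

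The main obstacle is the transience direction, specifically producing a flow of finite energy from the mere positivity of the cut-set infimum at the strictly larger parameter $\lambda'$. The subtlety is that max-flow–min-cut in the infinite setting requires a limiting argument (exhausting $T$ by finite truncations $T_{\le h}$, building finite flows with bounded energy, and passing to a weak limit), and one must verify that the slack gained by replacing $\lambda$ with $\lambda'$ genuinely converts the uniform cut lower bound into summable energy. Once the flow is in hand the energy estimate is a routine geometric-series computation, and the recurrence direction follows directly from Nash-Williams, so the crux is this flow construction and the limiting/compactness step that legitimizes it.
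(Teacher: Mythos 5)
First, a point of comparison: the paper does not prove this statement at all --- it is quoted as a classical result of R.~Lyons~\cite{Lyons90} and used as a black box --- so your proposal can only be measured against the standard proof, which is indeed the route you take (Thomson's principle, max-flow min-cut for transience, cut-set bounds for recurrence). Your recurrence direction is essentially sound, and can even be simplified: you do not need disjoint nested cut-sets and Nash--Williams, since for a single cut-set $C$ and any unit flow $\theta$, Cauchy--Schwarz over the edges joining the vertices of $C$ to their parents gives $\mathcal E(\theta) \ge \bigl(\lambda\sum_{v\in C}\lambda^{-|v|}\bigr)^{-1}$, so cut-sets of arbitrarily small weight already force infinite effective resistance. (If you insist on disjoint cut-sets, the nesting step you gesture at does work, but only after the observation that $\lambda > \br(T)\ge 1$ implies any cut-set of weight less than $\lambda^{-h}$ contains no vertex of depth $\le h$, hence automatically lies below any previously chosen finite cut-set.)

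The genuine gap is the energy estimate in the transience direction. You bound the flow pointwise by the capacity, $\theta(e)\lesssim (\lambda')^{-|u|}$, square this bound, and multiply by the number $N_n$ of vertices at depth $n$, claiming that $\sum_n N_n\,\lambda^n(\lambda')^{-2n}<\infty$ because $\lambda/\lambda'<1$. This is false: $N_n$ is not controlled from above by $\br(T)$, and the sum diverges even in the simplest case --- for the binary tree ($\br=2$) with $\lambda=1.1<\lambda'=1.2$ one gets $\sum_n\bigl(2\cdot 1.1/1.44\bigr)^n=\infty$, although the walk is certainly transient there. The correct estimate, which is the key trick in Lyons' argument, squares nothing: it spends one factor of capacity and keeps one factor of the actual flow, then uses flow conservation across levels. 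Writing $e_v$ for the edge joining $v$ to its parent, so that the resistance is $r(e_v)=\lambda^{|v|-1}$ and the max-flow min-cut step gives a flow of strength $s\ge\delta>0$ with $0\le\theta(e_v)\le(\lambda')^{-|v|}$, one has
\[
\mathcal E(\theta)\;=\;\sum_v r(e_v)\,\theta(e_v)^2\;\le\;\sum_v \lambda^{|v|-1}(\lambda')^{-|v|}\,\theta(e_v)\;=\;\frac 1\lambda\sum_{n\ge 1}\Bigl(\frac{\lambda}{\lambda'}\Bigr)^{n}\sum_{|v|=n}\theta(e_v)\;=\;\frac{s}{\lambda}\sum_{n\ge 1}\Bigl(\frac{\lambda}{\lambda'}\Bigr)^{n}\;<\;\infty,
\]
since the total flow crossing each level equals the strength $s$. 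With this replacement (the exhaustion/compactness argument you describe for max-flow min-cut on locally finite trees is indeed standard), your proof becomes complete; as written, however, the central estimate fails.
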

Combining this with our Theorem~\ref{equivalence-intro}, we get the following characterization of the branching number in terms of tree factorials.
\begin{thm} The branching number of $T$ is the supremum of $\lambda$ so that the normalized factorials of the pair $(T,\ell_\lambda)$ have a finite limit.
\end{thm}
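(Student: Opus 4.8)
The plan is to realize $\br(T)$ as the phase transition of the one–parameter family $(T,\ell_\lambda)$ and to detect this transition at the level of factorials through the chain of equivalences furnished by Lyons' theorem and by Theorem~\ref{equivalence-intro}: the walk $RW_{(T,\ell_\lambda)}$ is transient for $\lambda<\br(T)$ and recurrent for $\lambda>\br(T)$, while, under weak completeness, transience of $RW_{(T,\ell_\lambda)}$ is equivalent to the finiteness of $H(T,\ell_\lambda)$. Concretely I would set
\[
\Lambda:=\sup\bigl\{\lambda>0:\ H(T,\ell_\lambda)<\infty\bigr\},
\]
noting that $\tfrac1n\,n!_{(T,\ell_\lambda)}$ always converges in $[0,\infty]$ by superadditivity and Fekete's lemma, so that the hypothesis ``the normalized factorials have a finite limit'' is exactly the condition $H(T,\ell_\lambda)<\infty$. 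The theorem is then the statement $\Lambda=\br(T)$, which I prove by two inequalities.

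First I record that $\br(T)\ge 1$ for every infinite locally finite tree, and that for $\lambda\ge 1$ each edge of $(T,\ell_\lambda)$ has length $\lambda^{|u|}\ge 1$, so $\ell_\lambda$ is bounded away from zero and $(T,\ell_\lambda)$ is weakly complete; hence Theorem~\ref{equivalence-intro} is available throughout this range. For the upper bound, take any $\lambda>\br(T)$; since $\br(T)\ge1$ we have $\lambda>1$, Lyons' theorem makes $RW_{(T,\ell_\lambda)}$ recurrent, and Theorem~\ref{equivalence-intro} forces $H(T,\ell_\lambda)=\infty$. Thus no $\lambda>\br(T)$ belongs to the defining set of $\Lambda$, giving $\Lambda\le\br(T)$ (this also covers $\br(T)=\infty$ vacuously). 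For the lower bound when $\br(T)>1$, I pick any $\lambda\in(1,\br(T))$: Lyons gives transience, weak completeness holds, and Theorem~\ref{equivalence-intro} yields $H(T,\ell_\lambda)<\infty$, so $\lambda\le\Lambda$; letting $\lambda\uparrow\br(T)$ gives $\Lambda\ge\br(T)$, and together with the upper bound we conclude $\Lambda=\br(T)$. The same argument, with $\lambda$ taken arbitrarily large, settles the case $\br(T)=\infty$.

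The one remaining case, which I expect to be the main obstacle, is $\br(T)=1$: here the lower bound $\Lambda\ge 1$ forces me to control $H(T,\ell_\lambda)$ for $\lambda<1$, a regime in which $\ell_\lambda$ is no longer bounded away from zero and weak completeness can genuinely fail — precisely when $T$ contains an infinite branch–free ray, whose $\ell_\lambda$–length $\sum_{k\ge d}\lambda^{k}$ is finite. To handle this I would not invoke Theorem~\ref{equivalence-intro} directly but instead establish the one–sided estimate $H(T,\ell_\lambda)\le\|\eta_\lambda\|^2$, where $\eta_\lambda$ is the unit current flow of $(T,\ell_\lambda)$; since $\lambda<\br(T)$ keeps the walk transient, $\eta_\lambda$ has finite energy (equal to the effective resistance from $\r$ to infinity), so $H(T,\ell_\lambda)<\infty$ and $\lambda\le\Lambda$. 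Heuristically a finite–length ray can only make the greedy sequence cheaper, so the inequality $H\le\|\eta_\lambda\|^2$ should persist after the loss of weak completeness, even though the matching lower bound and the equidistribution conclusion of Theorem~\ref{thm:main2-intro} need not. Alternatively one may complete the metric realization $\Gamma$ by adjoining, as capacitated leaves, the finitely many ideal endpoints of branch–free finite–length rays, apply the capacitated theory of Theorem~\ref{thm:main1bis-intro} to the resulting weakly complete tree, and verify that this completion leaves every $n!_{(T,\ell_\lambda)}$ unchanged. Either route closes the case $\br(T)=1$ and completes the identification $\Lambda=\br(T)$.
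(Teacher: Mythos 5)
Your overall strategy---reading off $\br(T)$ from the chain ``Lyons' theorem $+$ Theorem~\ref{equivalence-intro}'' and proving the two inequalities $\Lambda\le\br(T)$ (via recurrence for $\lambda>\br(T)$) and $\Lambda\ge\br(T)$ (via transience for $1<\lambda<\br(T)$)---is exactly the paper's proof, which presents the theorem as an immediate combination of these two results. You also deserve credit for isolating the one genuine subtlety, namely the case $\br(T)=1$, where the lower bound forces you into the regime $\lambda<1$ in which $\ell_\lambda$ is not bounded away from zero and weak completeness can fail; the paper passes over this case in silence.

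However, your primary route for closing that case rests on a false claim. The inequality $H(T,\ell_\lambda)\le\|\eta_\lambda\|^2$ does \emph{not} persist when weak completeness fails; this is precisely the content of the paper's Remark~\ref{rem:ex}, and it can be realized inside your one-parameter family. Let $\r$ have two children $u,v$, let $T_u$ be a branch-free infinite ray, let $T_v$ be a binary tree, and take $\lambda=\tfrac12$. The ray is a \emph{single} point of $\widetilde\partial\Gamma$, so with standard capacity the greedy sequence can use it only once, at cost $0$, and all other picks live on the $v$-side; hence $H(T,\ell_\lambda)$ equals the limit for the $v$-side, which is weakly complete and transient, so by Theorem~\ref{thm:measure} equals its effective resistance $1+\sum_{k\ge1}(\lambda/2)^k=\tfrac43$. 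But the unit current flow of the full tree sees the ray as a parallel conductor of resistance $\sum_{k\ge 0}\lambda^k=2$, so $\|\eta_\lambda\|^2=\bigl(\tfrac12+\tfrac34\bigr)^{-1}=\tfrac45<\tfrac43=H(T,\ell_\lambda)$. Your heuristic (``a finite-length ray can only make the greedy sequence cheaper'') is exactly backwards: the ray makes the \emph{flow} cheaper, but not the factorials, because capacity one prevents the greedy process from exploiting it more than once; replacing the binary tree by a sparsely branching tree produces the same failure with $\br(T)=1$, i.e.\ in the very regime where you invoke the inequality. (Your fallback construction also has a slip: there may be infinitely many branch-free infinite rays, not finitely many.) Fortunately, the conclusion you need is true for a much simpler reason: for $\lambda<1$ every element of $\widetilde\partial\Gamma$ has $\ell_\lambda$-length at most $\sum_{k\ge0}\lambda^k=\frac1{1-\lambda}$, hence every intersection pairing is bounded by $\frac1{1-\lambda}$, so $n!_{(T,\ell_\lambda)}\le\frac n{1-\lambda}$ and $H(T,\ell_\lambda)\le\frac1{1-\lambda}<\infty$ unconditionally, with no appeal to flows or to Theorem~\ref{equivalence-intro}. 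Substituting this one-line bound for your flow argument closes the case $\br(T)=1$ and makes the proof complete.
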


\subsection{Organization of the paper} The local weighting process and the proofs of Theorem~\ref{thm:main1-intro}, Theorem~\ref{thm:main1bis-intro}, and Propositios~\ref{prop:reduction}, as well as basic properties of the weighting process and the tree factorials, are presented in Section~\ref{sec:def}. In Section~\ref{sec:hminf} we consider the important question of how much information the factorial sequence gives about the tree. We show the realizability of any sufficiently biased sequence of non-negative numbers as the factorials of a pair $(T, \ell)$, and 
deduce from that construction, the existence of different trees with the same factorial sequence. 

The growth of the factorial sequence is studied in Section~\ref{sec:growth}. The equivalence Theorem~\ref{equivalence-intro}, as well as the limit and equidistribution theorem~\ref{thm:main2-intro} are proved in that section. We omit the proof of Theorem~\ref{thm:finite}, which can be obtained by the same arguments.  

Some concluding remarks are given in Section~\ref{sec:concluding}.

\section{Definition and  basic properties}\label{sec:def}
In this section, we give the definition of the factorial sequence in terms of a \emph{local exploration process} in the tree. We then show later that this definition is equivalent to the definition given in the introduction.

Let $T$ be a locally finite rooted tree. Denote by $\r$ the root of $T$. We orient $T$ away from the root. For any vertex $v$ there is a unique oriented path from $\r$ to $v$ that we denote by $[\r,v]$, and denote by $|v|$ the length of $[\r, v]$ which we call the \emph{generation} of $v$. We write $u \leq v$ if $u$ lies in the oriented path from 
 $\r$ to $v$. The \emph{parent} of a vertex $v \neq \r$ is the unique vertex $u$ with $u\leq v$, and $|u| = |v|-1$; 
 it is denoted by $\cev{v}$.  For two vertices $u,v$, we write $u \sim v$ if $u$ and $v$ are adjacent in the tree. The valence of a vertex $v$ is the number of vertices $u\sim v$ in the tree. 

 Edges in this paper mean oriented edges, the edge $uv$ is thus oriented 
 from $u$ toward $v$, and we have $u = \cev v$.  If a vertex $v$ is a descendant of another vertex $u$, we denote by $[u,v]$ the unique path from $u$ to $v$. A \emph{pending} edge of a vertex $u$ is an edge which joins $u$ to one of its children.  For any vertex $v$ in $T$, we denote by $\br(v)$ the number of children of $v$.  A vertex $v$ in $T$ with $\br(v) \geq 2$ is called \emph{branching}. The  set of all the branching vertices of $T$ is denoted by $\mathscr B(T)$. 

\medskip

By a \emph{leaf} of a rooted tree $T$ we mean any vertex $v \neq \r$ of $T$ of valence one if $T$ is not reduced to a single vertex $\r$. Otherwise, if $T$ has a unique vertex $\r$, then $\r$ is a leaf of $T$.  We denote by $L(T)$ the set of all the leaves of $T$. 

An \emph{internal} vertex of $T$ is  any vertex different from  the leaves of $T$.

 \medskip
 
By a \emph{strict path} in $T$ we mean any oriented path $P$ which starts from the root $\r$, does not contain any branching vertex in its interior, and which is maximal with respect to this property (for the inclusion of paths). It follows that a strict path $P$ 
 either connects $\r$ to a branching vertex of $T$, or connects $\r$ to a leaf of $T$, or is infinite and $T\setminus \Bigl(P\setminus\{ \r\}\Bigr)$ is connected. In addition, for any pending edge  $\r u$ at $\r$, there exists a unique strict path which contains $u$, and these are all the strict paths of $T$.

\medskip

For any vertex $v$ in $T$, we denote by $T_v$ the subtree of $T$ consisting of $v$ and all of its descendants, rooted at $v$. 

\medskip

Let $\ell: E(T) \to \mathbb R_+$ be a \emph{length function} on the edges of $T$ which assigns to each edge  $e$ of $T$ its length $\ell_e = \ell(e)$. Denote by $\Gamma$ the metric tree associated to the pair $(T, \ell)$. Recall that $\Gamma$ is the disjoint union of the vertex set $V(T)$ and open intervals $I_e$ of length $\ell_e$, for $e\in E$, with the identification of the end-points of $I_e$ with the corresponding vertices in $V(T)$. The pair $(T, \ell)$ is called a \emph{model} of $\Gamma$.

A great source of examples for what follows are trees coming from an arithmetic situation, in which case the length function $\ell$ is the constant function $1$. We call the constant length function $1$ the \emph{standard} length function.

\medskip

A \emph{capacity} function on $T$ is any function $\chi: L(T) \rightarrow \mathbb N \cup\{\infty\}$ giving a capacity to any leaf of $T$. The standard capacity function is the constant function $1$, and if there is no mention of the capacity  in what follows, it means all the leaves have capacity one. 
\medskip

A \emph{weighted tree} $(T, \omega)$ in this paper means a tree $T$ with a weight function $\omega: E(T) \rightarrow \mathbb N \cup\{\emptyset\}$, such that the set of edges $e$ with $\omega(e) \neq \emptyset$ forms a connected subgraph of $T$. 
An edge $e$ of $(T, \omega)$ with $\omega(e) =\emptyset$ is called \emph{unweighted}; all the other edges are called \emph{weighted}.

\medskip

For a weighted tree $(T, \omega)$, we denote by $T_\omega$ the subtree of $T$ which contains the root and all the weighted edges $e\in E(T)$. A vertex $u$ of $T$ is called \emph{clear} if all the pending edges of $u$ are unweighted. 
The clear vertices are all the vertices of $T$ which are either a leaf of $T_\omega$ or does not belong to $T_\omega$. 

A vertex $v$ of $T_\omega$ is called \emph{unsaturated} if either  $v$ is an internal vertex of $T_\omega$ and there is an edge in $E(T) \setminus E(T_w)$ incident to $v$, or, in the presence of a capacity function on $T$,  $v$ is a leaf of $T$ and $\omega(\cev v v) < \chi(v)$.

For a tree $T$ with a length function $\ell$ and weight function $\omega$,   the \emph{weighted length} $\ell_\omega$ of a path $P$  in $T_\omega$ is defined as 
\[\ell_{\omega}(P) := \sum_{e\in E(P)} \omega(e)\ell_e.\]

We now describe a \emph{weighting process} which provides an alternative equivalent definition of the factorial sequence. 

 Let $T$ be a locally finite tree, $\ell$ a length function on $T$, and $\chi$ a capacity function on $L(T)$.   Consider the weight function $\omega_0$ which assign $\emptyset$ to any edge. Let $T_0:=T_{\omega_0}$, and note that all the edges of $T$ are unweighted and we have $T_0= \{\r\}$.   We recursively 
construct a sequence of weighted trees $(T, \omega_n)$ and a sequence of non-negative real numbers $a_n$. The construction will be so that for all $n\geq 1$

\begin{center}
$(*)$\qquad \emph{ all the clear vertices of $T_{\omega_n}$ are either branching or a leaf in $T$.}
\end{center}

Define $a_0:=0$. Let $\r v$ be any edge of $T$ incident to $\r$. If such an edge does not exist, i.e., if $T$ is reduced to a single vertex $\r$, then we stop, and define $x_n = \r$ and $a_n = 0$ for all $1\leq n <\chi(\r)$. Otherwise, 
let $P$ be a strict path in $T$ containing both $\r$ and $v$. Define 
\[\omega_{1} (e) :=   
 \begin{cases}
    1 \qquad & \textrm{for all edges} \,\, e \in P \\
     \omega_0(e) =\emptyset \qquad & \textrm{otherwise}.
   \end{cases}    \]
Note that $(*)$ is clearly verified for $T_{w_1}$. 

Proceeding by induction, suppose now that $n\in \mathbb N$, we are at stage $n$ and we have a weighted tree $(T, \omega_n)$, a sequence of vertices $x_1, \dots, x_{n-1}$ of $T$, and a sequence of integers $a_0, \dots,a_{n-1}$. Let $T_n :=T_{\omega_n}$ and denote by $U_n$ the set of all the unsaturated vertices $v$ of $T_n$. 
Assume that $T_{n}$ verifies $(*)$.

If $U_n = \emptyset$, then we stop. Otherwise, if $U_n$ is non-empty, choose a vertex $x_n$ in $U_n$ with minimum weighted length to the root $\r$, i.e., so that $\ell_{\omega_n} ([\r, x_n]) = \min_{v\in U_n} \ell_{\omega_n}(\r, v)$. 
Set $a_n := \ell_{\omega_n}(x_n)$, and 
 define the weighted tree $(T, \omega_{n+1})$ as follows, depending on whether
 $x_n$ is clear or not.

\medskip

\begin{itemize}
\item[(1)] Either $x_n$ is not clear. In this case, choose a pending edge $x_ny$  at $x_n$ with $\omega_n(x_ny)=\emptyset$. Let  $P_y$ be the unique strict path in the subtree $T_{x_n}$ which contains $y$. Define
\[\omega_{n+1} (e) :=   
 \begin{cases}
    1 \qquad & \textrm{if} \,\, e \,\, \textrm{belongs to $P_y$}, \\
     \omega_n(e)+1 \qquad &\textrm{if} \,\, e \,\, \textrm{belongs to the path}\,\, [\r, x_n] \,\, \textrm{in} \,\, T,\\
     \omega_n(e) \qquad & \textrm{otherwise}.
   \end{cases}    \]

\item[(2)] Or $x_n$ is a clear vertex of $T_{n}$. By Property $(*)$, $x_n$ is either branching in $T$ or it belongs to $L(T)$. 

\begin{itemize}
\item[(2.1)] If $x_n$ is branching, then choose any two pending edges $e_1=x_nz$ and $e_2=x_nw$ at $x_n$, and consider the two (disjoint) strict 
paths $P_z$ and $P_w$ in  $T_{x_n}$ with $z\in P_z$ and $w\in P_w$.  
 Define 
 \[\omega_{n+1} (e) :=   
 \begin{cases}
    1 \qquad & \textrm{if} \,\, e \,\, \textrm{belongs to the union} \,\, P_z\cup P_w, \\
     \omega_n(e)+1 \qquad &\textrm{if} \,\, e \,\, \textrm{belongs to the path}\,\, [\r,x_n],\\
     \omega_n(e) \qquad & \textrm{otherwise}.
   \end{cases}    \]
   \item[(2.2)] If $x_n$ is a leaf of $T$, then since $x_n$ is unsaturated, we have $\omega_n(\cev x_n x_n )< \chi(x_n)$. Define 
    \[\omega_{n+1} (e) :=   
 \begin{cases}
     \omega_n(e)+1 \qquad &\textrm{if} \,\, e \,\, \textrm{belongs to the path}\,\, [\r,x_n],\\
     \omega_n(e) \qquad & \textrm{otherwise}.
   \end{cases}    \]
   \end{itemize}
   \end{itemize}

   Let $T_{n+1} := T_{\omega_{n+1}}$. Any clear vertex $v$ of $T_{n+1}$ is either a clear vertex of $T_n$, or an end-point of a strict path in the subtree $T_{x_n}$ (one among $P_y, P_z, P_w$). It follows that $v$ is either branching or a leaf in $T$. Thus, $T_{n+1}$ verifies Property $(*)$, and the above definition results in a sequence of weighted trees $(T, \omega_i)$, a sequence of vertices $x_i$, and specially, a sequence of reals $a_i$.
   
    Note  that in the case the length function takes integer values, all the numbers $a_i$ are integers.  
   
   By definition, it is easy to see that in the case $\chi$ is the standard capacity function, the sequence is infinite if and only if the number of branching vertices of $T$ is infinite. More generally, define $N_{T,\chi}$ by
    \begin{equation}\label{form:nt}
  N_{T, \chi} := 1+ \sum_{v\in \mathscr B} (\br(v)-1) + \sum_{v\in L(T)} (\chi(v)-1).
 \end{equation} 
 One can see directly from the definition that  $(T, \omega_n)$ and $a_n$ are defined provided that $0\leq n< N_{T, \chi}$, as in this case $U_n$ is always non-empty. (See also the proof of Theorem~\ref{thm:main1} below).

The sequence $(T, \omega_n)$ is obviously not unique in general as it involves making a choice of a vertex $x_n\in U_n$ and strict paths in some subtrees at each stage. However, the sequence $\{a_i\}_{0\leq i<N_{T,\chi}}$ only depends on $(T, \ell)$ (actually, only on the rooted metric tree $\Gamma$ associated to $(T, \ell)$). 

\begin{thm}\label{thm:main1}
 $(i)$ The sequence $a_0,a_1, a_2, \dots$ only depends on $(T,\ell, \chi)$. 
 
\noindent  $(ii)$ For two pairs $(T_1, \ell_1)$ and $(T_2, \ell_2)$ with the same metric realization, any capacity function $\chi$ on $T_1$ induces a capacity function on $T_2$, and the triples $(T_1, \ell_1, \chi)$ and $(T_2, \ell_2, \chi)$ have the same factorial sequence. 
\end{thm}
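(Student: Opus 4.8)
The plan is to prove both assertions together by extracting from the construction a canonical, choice-free recursion for the sequence and then observing that its ingredients are purely metric. First I would record the monotonicity $a_0\le a_1\le a_2\le\cdots$. This is immediate from the construction: in passing from $\omega_n$ to $\omega_{n+1}$ we only increase the weight along $[\r,x_n]$ and introduce new weighted edges further from $\r$, so every surviving vertex of $U_{n+1}$ has weighted length at least its former value, while every newly created unsaturated vertex lies beyond $x_n$ and hence has weighted length $\ge a_n$; therefore $a_{n+1}=\min_{v\in U_{n+1}}\ell_{\omega_{n+1}}([\r,v])\ge a_n$. Monotonicity reduces $(i)$ to showing that the multiset $\{a_n\}$ is independent of the choices (of the minimizer $x_n\in U_n$ and of the strict paths opened at each stage).

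The engine of the proof is the following recursion. Decompose $T$ along the strict paths $P_1,P_2,\dots$ issuing from $\r$; each $P_i=[\r,b_i]$ ends at a branching vertex or a leaf $b_i$ (or is infinite, behaving as a capacity-one end) and has metric length $L_i=\ell(P_i)$. The structural claim is that, measured from $\r$, the cost of the $m$-th operation localised to the subtree $T_{b_i}$ equals $a^{(b_i)}_m+mL_i$, where $a^{(b_i)}_{\bullet}$ is the factorial sequence of $T_{b_i}$ and $m\ge 0$, the term $m=0$ being the cost-zero event of reaching $b_i$ by opening $P_i$. The reason for the shift is that an operation localised to $T_{b_i}$ increments, among the root strict paths, exactly the edges of $P_i$, so the weight carried by the stalk $P_i$ at the $m$-th such operation is precisely $m$. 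Since each shifted subtree sequence $\bigl(a^{(b_i)}_m+mL_i\bigr)_m$ is nondecreasing, monotonicity forces the globally cheapest available operation to be performed at every stage, so $\{a_n\}_{0\le n<N_{T,\chi}}$ is the nondecreasing merge over $i$ of these shifted sequences. As a merge of determined sequences is determined, this proves $(i)$, the induction being on the number of branching vertices $|\mathscr B(T)|$: a branching root strictly distributes its branch vertices among the $T_{b_i}$, while a non-branching root is handled by collapsing its stalk via the shift $a^{(\r)}_n=a^{(b_1)}_n+nL_1$ and recursing; the base case is a single leaf, whose sequence is $0,\ell,2\ell,\dots$ truncated at its capacity, and the count matches $N_{T,\chi}=\sum_i N_{T_{b_i},\chi}$. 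When $|\mathscr B(T)|=\infty$ one fixes $n$ and truncates $T$ beyond weighted length $a_n$ — the removed regions can never have furnished a cheaper option — reducing to the finite case.

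Assertion $(ii)$ then follows because every ingredient of the recursion is an invariant of the metric realisation $\Gamma$. The strict paths of $T$ correspond to the topological edges of the reduction of $\Gamma$, since suppression or insertion of valence-two vertices neither creates nor destroys branching vertices; their lengths $L_i$, the branching/leaf structure, and the leaves themselves (the extremities of $\Gamma$, to which $\chi$ canonically transfers) are metric data; and the weighted length of a uniformly weighted strict path is its weight times its metric length, which is insensitive to subdivision. Hence the merge formula, and with it each $a_n$, factors through $\Gamma$, so two models $(T_1,\ell_1)$ and $(T_2,\ell_2)$ of the same $\Gamma$ yield the same factorial sequence.

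The main obstacle is the structural claim of the second paragraph — that global greediness on $T$ is consistent with independently greedy, stalk-shifted processes on the subtrees $T_{b_i}$. One must check that this holds at the level of cost sequences rather than operation by operation: the reaching of $b_i$ (bundled into the opening of $P_i$) plays the role of the index-$0$ setup of $T_{b_i}$, so that a clear branching vertex $b_i$ processed once (Case $(2.1)$) reproduces the setup-plus-first-operation of the subtree process, and Cases $(1)$, $(2.1)$, $(2.2)$ together with the capacity bookkeeping summarised by $N_{T,\chi}$ must be reconciled with this accounting. The analytic point that makes everything order-independent, and that disposes of ties among several minimizers at the current minimal weighted length, is that an operation localised to one subtree strictly raises weighted lengths only inside that subtree and leaves all other costs unchanged; thus operations in distinct subtrees commute, and the sorted multiset of costs — hence, by monotonicity, the whole sequence — does not depend on the order in which minimizers and strict paths are selected.
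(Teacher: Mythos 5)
Your engine is essentially the paper's: decompose at the root, observe that the restriction of the global greedy process to a branch is itself a greedy process for that branch with all costs shifted by $m$ times the length of the connecting stalk (this is the paper's Claim~\ref{claim2.5}, and your bundling of "reaching $b_i$" with the index-$0$ step of the subtree process is the correct bookkeeping), and conclude that the global sequence is the sorted merge of the shifted branch sequences; part $(ii)$ then follows, exactly as in the paper, because strict paths, their lengths, and the branching/leaf structure are invariants of the metric realization. Where you differ is the induction scheme, and that is where there is a genuine gap. You induct on $|\mathscr B(T)|$ and dispose of the case $|\mathscr B(T)|=\infty$ by "truncating $T$ beyond weighted length $a_n$". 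But Theorem~\ref{thm:main1} makes no assumption that edge lengths are bounded away from zero (weak completeness only enters in Section~\ref{sec:growth}), so a region of bounded metric --- or weighted --- length can contain infinitely many branching vertices: take a path $\r=v_0,v_1,v_2,\dots$ with $\ell(v_iv_{i+1})=2^{-i}$ and a second child attached to each $v_i$. Every truncation at radius $\geq 2$ retains all the $v_i$, so your truncation never lands in the finite case that your induction requires; "the removed regions can never have furnished a cheaper option" is the right intuition, but as stated it is not a reduction.

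The gap can be closed in two ways. (a) Repair the truncation: to compare two given processes up to step $n$, restrict to the union of their explored regions (each process opens $O(n)$ strict paths in $n$ steps, hence meets only finitely many branching vertices), assign capacity $\infty$ to the cut leaves so that repeated selections of a cut vertex remain legal, and check that each process restricted to this subtree is still greedy there --- this works because the unsaturated vertices of the truncated tree form a subset of those of $T$ while every chosen vertex lies in both, but none of these verifications is in your text. (b) Do what the paper does: induct on the index $n$, with the inductive statement $\mathscr P_n$ quantified over \emph{all} triples $(T,\ell,\chi)$ simultaneously, so that the hypothesis applies to the subtrees at the root no matter how many branching vertices they have, and no truncation is ever needed. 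Option (b) is strictly simpler and leaves every other part of your argument untouched.
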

This leads to the following definition.

\begin{defn}[Logarithmic tree factorials]\rm
$\bullet$ Let $T$ be a rooted locally finite tree.  For each integer $0\leq n < N_T$, the integer $a_n$ associated to the tree $T$ with standard length function $\ell \equiv 1$ is called the $T$-factorial of $n$ and is denoted by $n!_T$.    

$\bullet$ Let $\Gamma$ be a rooted metric tree with a model $(T, \ell)$, where $T$ is a rooted locally finite tree and $\ell$ a length function on $E(T)$. Let $\chi : L(T) \rightarrow \mathbb N \cup\{\infty\}$ be a capacity function. For each integer $0\leq n < N_{T,\chi}$, the real number $a_n$ associated to the tree $T$ with length function $\ell$ and with capacity $\chi$ is called the $(T,\ell,\chi)$-factorial or $(\Gamma,\chi)$-factorial of $n$ and is denoted by $n!_{(T,\ell,\chi)} = n!_{(\Gamma,\chi)}$. If $\chi$ is the standard capacity function, we drop $\chi$ and simply write $n!_{(T,\ell)}$ or $n!_{\Gamma}$.

$\bullet$ For any pair $(T, \ell)$ with metric realization $\Gamma$ and a capacity function $\chi$, the sequence $x_n$ in the weighting process described above is a called a \emph{factorial-determining} or \emph{factorial-defining} sequence for $(T, \ell,\chi)$ and $(\Gamma,\chi)$. The sequence of trees $T_n$ in the weighting process is called a sequence of \emph{factorial trees} for $(T, \ell)$.
\end{defn}
Note that neither the factorial-defining sequence nor the factorial trees are unique in general. 

\begin{proof}[Proof of Theorem~\ref{thm:main1}$(i)$]
 We proceed by induction. Consider the following property $\mathscr P_n$: 
 \begin{center}
  $(\mathscr P_n)$ \qquad For any locally finite tree $T$, any length function $\ell: E(T) \rightarrow \mathbb R_+$, any capacity function $\chi$ on $T$, and any $0\leq i \leq \min\{n,N_{T,\chi}-1\}$, the number $a_i$ only depends on $(T, \ell, \chi)$.
 \end{center}

By our definition, $a_0=0$ for any tree $T$, so obviously $\mathscr P_0$ holds. We show that $\mathscr P_n$ implies $\mathscr P_{n+1}$, from which the theorem follows.

 \medskip
 
Assume $\mathscr P_n$ holds. Let $T$ be any locally finite tree $T$, $\ell:E(T) \to \mathbb R_+$ a length function, and $\chi$ a capacity function on $T$.  If $T$ is reduced to a single vertex $\r$, then we have $a_i = 0$ for all $0\leq i<N_{T,\chi}$, by definition, and so the property $\mathscr P_{n+1}$ obviously holds for $T$. Otherwise, consider the following two cases depending on whether $\br(\r)>1$ or $\br(\r)=1$.

\medskip

\noindent (I) \emph{Suppose $\br(\r) >1$}. Let $d=\br(\r)$, and denote by $u_1, \dots, u_d$ the children of $\r$ in $T$.
First note that the first $d$ terms in any sequence $a_0, a_1, \dots$ produced by the weighting process are equal to $0$, since, by the positivity of the values of the length function, the weighting process has to give weight one to all the pending edges at $\r$ before giving weight to any other unweighted edge of $T$.

Consider the subtrees $T_{u_1}, \dots, T_{u_d}$ of  $T$ rooted at $u_1, \dots, u_d$, respectively, with capacity function $\chi_j$ defined as the restriction of $\chi$ to $T_{u_j}$, and set $n_i := \min\{n, N_{T_{u_i}, \chi_j}-1\}$. 
Since $\mathscr P_n$ is verified for all trees, we get for any $i=1, \dots, d$, a well-defined sequence 
$a_{0}^{u_i}, \dots, a_{n_i}^{u_i}$. For each $i$, define the set 
$$A_i := \bigl\{\,a_{0}^{u_i}, a_{1}^{u_i}+\ell_{\r u_i}, \dots, a_{n_i}^{u_i} + n_i\ell_{\r u_i}\bigr\},$$
and let $A$ be the multiset union of the sets $A_i$. Let $ m := n_1+ \dots +n_d +d-1$ and note that $A$ has size $m+1$.
Order the elements of $A$ in an increasing order $b_0 \leq b_1 \leq b_2 \leq \dots \leq b_{m}$. 

\medskip

First note that
\begin{claim}
 We have $m \geq \min\{n+1, N_{T,\chi}-1\}$. 
\end{claim}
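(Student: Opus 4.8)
The plan is to reduce the whole claim to a single additivity identity for the quantity $N_{\cdot,\cdot}$ defined in \eqref{form:nt}, after which only a short two-case arithmetic argument remains. First I would record the decomposition of $T$ at its root. Since $\br(\r)=d\geq 2$, the root itself is branching, every other vertex of $T$ lies in exactly one subtree $T_{u_i}$, and a vertex $v\in T_{u_i}$ has the same children computed in $T$ or in $T_{u_i}$. Hence the branching set splits as $\mathscr B(T)=\{\r\}\sqcup\bigsqcup_{i=1}^d\mathscr B(T_{u_i})$ and the leaf set as $L(T)=\bigsqcup_{i=1}^d L(T_{u_i})$, with $\chi$ restricting to $\chi_i$ on each piece. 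Substituting these into \eqref{form:nt}, the root contributes $1+(\br(\r)-1)=d$, while each subtree contributes exactly $N_{T_{u_i},\chi_i}-1$, giving the identity
\[
N_{T,\chi}=d+\sum_{i=1}^d\bigl(N_{T_{u_i},\chi_i}-1\bigr)=\sum_{i=1}^d N_{T_{u_i},\chi_i}.
\]

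With this identity available, the claim follows directly from $n_i=\min\{n,\,N_{T_{u_i},\chi_i}-1\}$ and $m=n_1+\dots+n_d+d-1$. I would distinguish two cases according to whether the minimum defining some $n_i$ is attained at $n$. If for every $i$ we have $N_{T_{u_i},\chi_i}-1\leq n$, so that $n_i=N_{T_{u_i},\chi_i}-1$, then
\[
m=\sum_{i=1}^d\bigl(N_{T_{u_i},\chi_i}-1\bigr)+d-1=N_{T,\chi}-1,
\]
and in particular $m\geq\min\{n+1,\,N_{T,\chi}-1\}$. Otherwise there is an index $i$ with $N_{T_{u_i},\chi_i}-1\geq n$, hence $n_i=n$; since every $n_j\geq 0$ (because each $N_{T_{u_j},\chi_j}\geq 1$ by \eqref{form:nt}) and $d\geq 2$, this yields $m\geq n_i+(d-1)=n+d-1\geq n+1\geq\min\{n+1,\,N_{T,\chi}-1\}$. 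In either case the desired inequality holds.

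The only genuinely substantive step is the additivity identity $N_{T,\chi}=\sum_i N_{T_{u_i},\chi_i}$, and this is where I expect the care to be needed: one must check that the leading $1$ together with the root's branching contribution $\br(\r)-1=d-1$ exactly absorb the $d$ copies of $-1$ arising from the $d$ subtrees. Once the bookkeeping of branching vertices and leaves is set up cleanly, that verification is a direct substitution, and the subsequent case analysis is purely arithmetic with no further obstacle.
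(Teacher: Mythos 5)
Your proof is correct and takes essentially the same route as the paper: the same dichotomy (either some $n_i$ equals $n$, giving $m \geq n + d - 1 \geq n+1$, or else $n_i = N_{T_{u_i},\chi_i}-1$ for all $i$, giving $m = N_{T,\chi}-1$), hinging on the additivity identity $N_{T,\chi} = \sum_{i=1}^d N_{T_{u_i},\chi_i}$. The only difference is that you verify this identity explicitly from \eqref{form:nt} via the decomposition of $\mathscr B(T)$ and $L(T)$ at the root, whereas the paper simply asserts that it ``comes from the definition.''
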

\begin{proof} If there is an $1\leq i \leq d$, such that $n\leq N_{T_{u_i}, \chi_i}-1$, we get $m \geq n+ d-1\geq n+1$. Otherwise, we have 
$n_i =N_{T_{u_i},\chi_i}-1$ for all $i\in \{1,\dots,d\}$, and so using that $N_{T,\chi} = \sum_{i=1}^d N_{T_{u_i},\chi_i},$
which comes from the definition, we get $m = (\sum_{i=1}^d N_{T_{u_i},\chi_i}) -1 = N_{T,\chi}-1,$
 and the claim follows. 
\end{proof}

The following claim proves that property $\mathscr P_{n+1}$ holds for any rooted tree $T$ with $\br(\r) \geq 2$. 
\begin{claim}
 The first $n+2$ terms of any sequence $a_0, a_1, \dots$ associated to $(T,\ell, \chi)$ by the weighting process coincide with $b_0, \dots, b_{n+1}$. 
\end{claim}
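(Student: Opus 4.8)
The plan is to prove the second claim by relating the weighting process on $T$ directly to the $d$ weighting processes running independently on the subtrees $T_{u_1}, \dots, T_{u_d}$. The key conceptual point is that, because $\br(\r) \geq 2$ and all the length values are strictly positive, the weighting process on $T$ must first assign weight one to each of the $d$ pending edges at $\r$ (accounting for the $d$ leading zeros in Claim~1's counting via $A_0^{u_i}=0$), after which every unsaturated vertex $v$ of $T_{\omega_k}$ lies in exactly one of the subtrees $T_{u_i}$. From that moment on, the weighted length $\ell_{\omega_k}([\r,v])$ of such a $v$ equals $c_i \ell_{\r u_i} + \ell_{\omega_k}([u_i, v])$, where $c_i = \omega_k(\r u_i)$ is the current weight on the initial edge and the second summand is precisely the weighted length computed inside the subtree $T_{u_i}$ by its own induced weighting process. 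The crucial bookkeeping observation is that $c_i$ always equals the number of vertices already selected inside $T_{u_i}$ (including the initial assignment), since every selection of a vertex in $T_{u_i}$ increments $\omega(\r u_i)$ by one. Thus the quantity minimized at each step of the process on $T$, restricted to subtree $T_{u_i}$, is exactly $a_{t}^{u_i} + t\,\ell_{\r u_i}$ for the $t$-th selection in that subtree, which are precisely the elements of the multiset $A_i$.

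First I would make precise the correspondence between a run of the process on $T$ and a collection of runs on the subtrees. Concretely, I would argue that after the $d$ initial steps, any valid run of the weighting process on $(T, \ell, \chi)$ induces, by restriction to each $T_{u_i}$, a valid run of the process on $(T_{u_i}, \ell, \chi_i)$, and conversely any choice of interleaving of valid subtree-runs yields a valid run on $T$, subject only to the global greedy constraint that at each step the vertex of globally minimum weighted length is chosen. The identity $a_n = \ell_{\omega_n}(x_n)$ then shows that the multiset of values $\{a_k\}$ produced globally (after removing the $d$ leading zeros and re-indexing) is exactly the sorted merge of the multisets $A_i$. Since the global process is greedy — it always selects the unsaturated vertex of smallest current weighted length — the values $a_1, a_2, \dots$ are produced in nondecreasing order, so they coincide termwise with the sorted sequence $b_0 \leq b_1 \leq \dots \leq b_m$. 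Combined with the $d$ leading zeros, this gives the first $n+2$ terms as claimed.

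The main obstacle I anticipate is the verification that the global greedy minimization is genuinely compatible with running the subtree processes independently, i.e.\ that the value $a_t^{u_i} + t\,\ell_{\r u_i}$ assigned to the $t$-th selection inside $T_{u_i}$ does not depend on the order in which selections are interleaved across the different subtrees. This is where one uses the inductive hypothesis $\mathscr P_n$ in an essential way: it guarantees that the sequence $a_0^{u_i}, \dots, a_{n_i}^{u_i}$ is itself well-defined independently of the choices made inside $T_{u_i}$, so the only interaction between subtrees is through the shared greedy clock, which merely determines the interleaving order and not the values themselves. A secondary technical point requiring care is the capacity and saturation bookkeeping at the leaves and at the children $u_i$ when some $T_{u_i}$ is finite (the case $n_i = N_{T_{u_i},\chi_i}-1$), ensuring the count $m+1 = |A|$ matches and that $\mathscr P_{n+1}$ follows for indices up to $\min\{n+1, N_{T,\chi}-1\}$ as guaranteed by Claim~1. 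Once the value-preservation and the greedy-ordering are established, identifying the first $n+2$ terms with $b_0, \dots, b_{n+1}$ is immediate from the definition of the $b_j$ as the sorted multiset $A$.
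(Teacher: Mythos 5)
Your overall architecture matches the paper's: the bookkeeping observation that $\omega_n(\r u_i)$ equals the number of selections made so far inside $T_{u_i}$, so that the value recorded when $x_n$ lies in $T_{u_i}$ is $a^{u_i}_{\omega_n(\r u_i)} + \omega_n(\r u_i)\,\ell_{\r u_i}$, is exactly the paper's intermediate claim about the multiset $B$ of values produced so far, and your appeal to $\mathscr P_n$ to make the subtree values independent of the interleaving is also how the paper proceeds. Where you diverge is the concluding step, and that is where your write-up has a genuine gap. You deduce the termwise identification with $b_0,\dots,b_{n+1}$ from two facts: (a) the multiset of values produced is a union of prefixes of the $A_i$'s, and (b) the values are emitted in nondecreasing order. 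These two facts do not imply the conclusion. For instance, with $A_1=\{0,1,5\}$ and $A_2=\{0,2,3\}$, the output $0,0,2,3$ is nondecreasing and its multiset is the union of the prefixes $\{0\}\subset A_1$ and $\{0,2,3\}\subset A_2$, yet it differs from the sorted merge $0,0,1,2$. What rules such behavior out is not monotonicity of the output but the per-step greedy comparison \emph{across} subtrees: at each step the chosen value equals $\min_i\bigl(a^{u_i}_{c_i}+c_i\,\ell_{\r u_i}\bigr)$, the minimum over the next unused entries of the lists $A_i$ (where $c_i$ is the current count in $T_{u_i}$), and since each list $A_i$ is itself nondecreasing, this minimum is a smallest element of the entire unused part of $A$. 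Your first paragraph sets up everything needed for this, but the deduction you actually perform --- greedy implies nondecreasing, nondecreasing implies termwise equality --- discards the cross-subtree comparison, which is the heart of the claim; your ``main obstacle'' paragraph addresses a different (and real, but easier) issue, namely the interleaving-independence of the subtree values.

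The missing step is precisely what the paper's two-case contradiction argument supplies: taking the least $t$ with $a_t\neq b_t$ and comparing the prefix-count vectors $(s_1,\dots,s_d)$ and $(t_1,\dots,t_d)$, it uses in the case $a_t>b_t$ the greedy inequality $a^{u_h}_{s_h+1}+(s_h+1)\ell_{\r u_h}\geq a_t$ (the next candidate of a list cannot beat the chosen value), and in the case $a_t<b_t$ the monotonicity of each list, each case contradicting the minimality of $t$. Your direct merge argument can be repaired without contradiction: prove by induction on the step number that each chosen value is a minimum of the unused part of $A$, using exactly those two ingredients (candidate values are the next unused list entries, and each list is sorted). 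One further point to make explicit in either version: the candidate indices invoked always stay at most $n$, so that $\mathscr P_n$ applies to them, precisely because $\br(\r)=d\geq 2$ forces every count $c_i\leq n$ within the first $n+2$ steps; this is the role of the paper's preliminary counting claim, which you mention only in passing.
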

\begin{proof} For the sake of a contradiction  suppose this is not the case, and consider a weighting process resulting in a sequence $\{a_i\}_{ 0\leq i<N_{T,\chi}}$ such that the claim does not hold, and let $0\leq t\leq n+1$ be the smallest integer with $a_t \neq b_t$. Since $a_0=\dots=a_{d-1} =0$, and $b_0 = \dots =b_{d-1}=0$, we have $t\geq d$. In the ordering 
$b_0\leq b_1 \leq \dots \leq b_t$, each $b_i$ comes from one of the sets $A_1, \dots, A_d$.  Let $0\leq t_1 \leq n_1, \dots, 0\leq t_d\leq n_d$ be integer numbers so that the union of the smallest $t_i+1$ terms in each $A_i$ when reordered in an increasing order gives the sequence of $b_i$s for $0\leq i\leq t$. We note that the sequence $t_1, \dots, t_d$ is not necessarily unique as it might be repetitions among the members of different sets $A_i$. We have 
\[d+ \sum_{i=1}^d t_i =t+1.\]
Consider the weight function $\omega_{t+1}$, and set $s_i:= \omega_{t+1}(\r u_i)-1$.
We have 
\[d+\sum_{i=1}^{d} s_i =t+1,\]
Define $B_i := \bigl\{\,a_{0}^{u_i}, a_{1}^{u_i}+\ell_{\r u_i}, \dots, a_{s_i}^{u_i} + s_i\ell_{\r u_i}\bigr\}.$
Consider the mutiset union $B$ of the sets $B_1, \dots, B_d$.
\begin{claim}\label{claim2.5} The sequence $a_0, \dots, a_t$ coincides with the increasing sequence formed out of the elements of $B$.
\end{claim}
\begin{proof}Follows directly from the definition of the weighting process. Indeed, in any of the cases (1) or (2) in the definition of the weightings and the sequence $\{a_i\}$, if the vertex $x_n$ is  in the subtree $T_{u_i}$, for  $1\leq i \leq d$, then we have 
$a_n = a^{u_i}_{\omega_n(\r u_i) }+ \omega_n(\r u_i)\ell_{\r u_i}$.
\end{proof}

Applying the above claim, since we assumed $a_t \neq b_t$, we infer that the two sequences $(t_1, \dots, t_d)$ and $(s_1, \dots,s_d)$ are different. Let  $1\leq i ,j\leq d$ be the indices which give 
\[a_t = a^{u_{i}}_{s_i} + s_i \ell_{\r u_i}, \quad \textrm{and} \quad b_t = a^{u_{j}}_{t_j} + t_{j} \ell_{\r u_j}.\]

We divide the rest of the proof in two parts depending on whether $a_t>b_t$ or $b_t > a_t$.

Suppose first $a_{t} > b_t$.   Given that $a_{t_i}^{u_i} + t_i \ell_{\r u_i}$ appears among $b_0, \dots, b_t$, we get
$a_{t_i}^{u_i} + t_i \ell_{\r u_i} \leq b_t <a_t = a^{u_i}_{s_i} + s_i \ell_{\r u_i}$. Therefore, the sequence $\{a^{u_i}_j\}$ being increasing, we must have $t_i<s_i$. Since $\sum_{j=1}^d t_j = \sum_{j=1}^d s_j$, there exists an index $h$ so that $s_h< t_h$. It follows that 
\[a_t>b_t \geq a_{t_h}^{u_h}+ t_h \ell_{\r u_h} \geq a_{s_h+1}^{u_h}+ (s_h+1) \ell_{\r  u_h}.\]
This leads to a contradiction. Indeed, the description of the weighting process, the choice of $a_t$, and the fact that $\omega_{t+1}(\r u_h) =s_h$ implies in particular that $a_{s_h+1}^{u_h}+ (s_h+1) \ell_{\r u_h}\geq a_t$. 

\medskip

Suppose now that $b_{t} > a_t$. Then we  have 
\[a_{t_j}^{u_j} + t_j \ell_{\r u_j} > a_t \geq a_{s_j}^{u_j} + s_j \ell_{\r u_j}, \]
which implies $t_j> s_j$. Therefore, there exists an index $1\leq h\leq d$ such that we have $s_h >t_h$. It follows that 
\[a_t \geq a_{s_h} + s_h\ell_{\r u_h} \geq a_{t_h+1} + (t_h+1) \ell_{\r u_h} \geq b_t, \]
which again leads to a contradiction.
\end{proof}

\noindent  (II) \textit{Suppose $\br(\r) = 1$}. Let $P$ be the unique strict path in $T$. If $P$ is infinite, then $N_T =0$, and we are done. Otherwise, let $v$ be the other end of $P$. If $v$ is a leaf, then $T$ is a rooted 
path, and in this case $N_{T,\chi} =\chi(v)$, and we have $a_i = i \ell(P)$ for all $0\leq i< \chi(v)$, and again we are done. So we can suppose that $v$ is branching in $T$.  By case (I), since the root $v$ of $T_v$ is branching, the property $\mathscr P_{n+1}$ is verified for $T_v$. Let $h = \min\{n+1, N_{T_v, \chi}\}$. In particular, the sequence $a_0^v, \dots, a_h^v$ associated to $T_v$ is well-defined. 

We now note that by Formula~\eqref{form:nt}, we have $N_{T,\chi} = N_{T_v,\chi}$. The following claim, which directly follows from the definition of the weighting process, shows that property $\mathscr P_{n+1}$ holds also for any tree $T$ with $\br(\r)=1$. 

\begin{claim}\label{claim2.6}
 For each $0\leq i\leq h$, we have $a_i = a_i^v + i\ell(P)$. \end{claim}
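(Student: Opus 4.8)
The plan is to run the weighting process on $T$ and on the subtree $T_v$ in parallel and to exhibit a stage-by-stage correspondence between them, the only difference being that every weighted-length measurement taken in $T$ from the root $\r$ exceeds the corresponding measurement taken in $T_v$ from its root $v$ by the weighted length of the initial segment $P$.

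First I would record two elementary observations about the $T$-process. Since $\br(\r) = 1$, the initial weighting $\omega_1$ assigns weight $1$ to every edge of $P = [\r, v]$ and leaves $T_v$ untouched; moreover $\r$ and all the interior vertices of $P$, being of valence two in $T$, have their unique pending edge already weighted, so they are saturated and are never selected again. Consequently the forced first choice is $x_1 = v$, and every vertex $x_m$ selected at a stage $m \geq 1$ is $v$ or a descendant of $v$, whence $P \subseteq [\r, x_m]$ and each stage increments the weight of every edge of $P$ by one. An immediate induction then shows that at stage $n$ the weight $\omega_n$ is constant equal to $n$ on the edges of $P$, so that $\ell_{\omega_n}(P) = n\,\ell(P)$.

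Next I would set up the correspondence on $T_v$. The delicate point, and the place where the argument is least automatic, is the very first visit to $v$: in $T$ the vertex $v$ is clear and branching, so it is treated by case $(2.1)$, which lays weight $1$ on \emph{two} strict paths of $T_v$ in a single stage, whereas in $T_v$ the root $v$ is handled by the special initialization $\omega_1^v$ (one strict path) followed by case $(1)$ applied to $x_1^v = v$ (a second strict path). After identifying the strict paths chosen on the two sides, I would check that these two descriptions produce the same weight configuration on the edges of $T_v$, so that $\omega_n|_{T_v} = \omega_n^v$ for all $n \geq 2$, the single stage-$1$ discrepancy being harmless because $a_1^v = 0$ (the root $v$ is branching, so by case $(I)$ the first $\br(v) \geq 2$ factorials of $T_v$ vanish). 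Granting this, the sets of unsaturated vertices coincide, and since the weighted distances from $\r$ differ from those from $v$ by the constant $n\,\ell(P)$, the minimizers coincide too; one may take $x_n = x_n^v$ for all $n \geq 2$ and propagate $\omega_{n+1}|_{T_v} = \omega_{n+1}^v$ by the update rule, the only extra effect in $T$ being the harmless increment of $P$.

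Finally, combining the two observations yields, for $0 \leq i \leq h$,
\[
a_i = \ell_{\omega_i}([\r, x_i]) = \ell_{\omega_i}(P) + \ell_{\omega_i}([v, x_i]) = i\,\ell(P) + \ell_{\omega_i^v}([v, x_i^v]) = a_i^v + i\,\ell(P),
\]
with the cases $i = 0, 1$ checked by hand. Since every valid $T$-process restricts in this way to a valid $T_v$-process, and the $T_v$-factorials $a_i^v$ are already known to be independent of all choices (case $(I)$, applied to the branching root $v$), the right-hand side is choice-independent; this both proves the claimed identity and establishes property $\mathscr P_{n+1}$ for $T$. The main obstacle is precisely the bookkeeping at this first visit to $v$, where the one-step initialization of the $T_v$-process must be matched against the single branching stage of the $T$-process so that the weight configurations on $T_v$ synchronize from stage $2$ onward.
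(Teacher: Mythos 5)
Your proof is correct and takes essentially the same approach as the paper: the paper asserts that Claim~2.6 ``directly follows from the definition of the weighting process,'' and your stage-by-stage synchronization of the $T$-process with the $T_v$-process (including the careful bookkeeping matching the single case~(2.1) step at the first visit to $v$ against the initialization plus case~(1) step of the $T_v$-process) is precisely the verification the paper leaves implicit. The only slip is cosmetic: $\r$ has valence one, not two, since $\br(\r)=1$; this does not affect the argument.
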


This  finishes the proof of part $(i)$ of Theorem~\ref{thm:main1}. 
\end{proof}
\begin{proof}[Proof of Theorem~\ref{thm:main1}$(ii)$] Denote by $\Gamma$ the metric realization of both $(T_1,\ell_1)$ and $(T_1, \ell_2)$, and let $\chi$ be a capacity function on $T_1$. Note that we have $L(T_1) =L(T_2)$ and  $\mathscr B(T_1) = \mathscr B(T_2)$. It follows that $\chi$ is also a capacity function for $T_2$, and for any vertex $v\in \mathscr B(T_1) = \mathscr B(T_2)$, there is a bijection between the strict paths $P_1$ of the subtree $T_{1,v}$ of $T_1$ rooted at $v$, and the strict paths $P_2$ of the subtree $T_{2,v}$ of $T_2$ rooted at $v$, and moreover, under this bijection, we have $\ell_1(P_1) = \ell_2(P_2)$. 

The choices of vertices producing the factorial sequence in the description of the weighting process for a tree $T$ only depends on the root, branching and leaf vertices,  and the length of strict paths of  subtrees $T_v$ for  branching vertices $v$. It follows that any factorial-determining sequence  $x_n$ in $(T_1,\ell_1,\chi)$ is also factorial-determining sequence in $(T_2, \ell_2,\chi)$, from which the theorem follows. 
\end{proof}
We state the following useful recursive \emph{min-max formula} for the factorials obtained in the above proof.
\begin{thm} Let $(T, \ell,\chi)$ be a triple of a locally finite tree $T$ rooted at $\r$, a length function $\ell$  and a capacity function $\chi$ on $T$, respectively. Let $d=\br(\r)$ and denote by $u_1, \dots, u_d$ all the children of $\r$. For each $j=1,\dots, d$, let $N_j = N_{T_{u_j},  \chi_j}$, for the restriction $\chi_j$ $\chi$ to $T_{u_j}$, and denote by $\ell_j$ the restriction of $\ell$ to $T_{u_j}$. Then, for all integers $0\leq n<N_{T,  \chi}$, we have
$$n!_{(T,\ell,\chi)}  = \min_{\substack{(n_1, \dots, n_d) \in \mathbb N^d\\  \textrm{for all $j$, }\,0\leq n_j < N_j\\
n_1+ \dots +n_d = n+1}} \max\,\, \Bigl\{\,(n_j-1)!_{(T_{u_j}, \,\ell_j,\chi_j)} + (n_j -1) \ell_{\r u_j}\,\Bigr\}_{j=1}^d.$$
\end{thm}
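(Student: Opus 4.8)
The plan is to derive this min-max formula directly from the recursive structure already established in the proof of Theorem~\ref{thm:main1}$(i)$. The key observation is that when $\br(\r)=d$, the factorial sequence of $(T,\ell,\chi)$ is obtained by merging the shifted sequences of the subtrees $T_{u_j}$. Specifically, for each child $u_j$, the contribution to the global sequence is the multiset $\{a_i^{u_j} + i\,\ell_{\r u_j}\}$, and the global factorial sequence $\{a_n\}$ is precisely the increasing reordering of the multiset union of these shifted sequences over all $j$, as established in Claim~\ref{claim2.5}. Writing $a_i^{u_j} = i!_{(T_{u_j},\ell_j,\chi_j)}$ in the present notation, the quantity I must identify is the $(n+1)$-st smallest element (indexed from $0$, so the term $b_n$) of this merged multiset.

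First I would make precise the merging description: the value $n!_{(T,\ell,\chi)} = a_n = b_n$ equals the $(n+1)$-st term $b_n$ in the increasing ordering $b_0 \le b_1 \le \dots$ of the multiset union $A = \bigsqcup_{j=1}^d A_j$, where $A_j = \{a_i^{u_j} + i\,\ell_{\r u_j} : 0 \le i \le N_j - 1\}$. Then I would translate the statement ``$b_n$ is the $(n+1)$-st smallest element of the union of $d$ increasing sequences'' into the claimed min-max. The standard combinatorial fact is that if we take $n_j$ elements from the $j$-th sorted list with $\sum_j n_j = n+1$, then to have $b_n$ be at least as large as the largest element selected, we want to choose the $n_j$ so that the largest selected element is minimized; the $(n+1)$-st smallest element of the union is exactly $\min_{\sum n_j = n+1} \max_j (\text{the } n_j\text{-th element of list } j)$. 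Here the $n_j$-th element of $A_j$ (counting from the first, i.e. index $n_j - 1$ in the $0$-indexed convention) is $(n_j - 1)!_{(T_{u_j},\ell_j,\chi_j)} + (n_j - 1)\ell_{\r u_j}$, which matches the expression inside the max.

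The heart of the argument is this selection lemma, which I would prove by a short exchange/greedy argument: the smallest $n+1$ elements of the merged multiset are obtained by some allocation $(n_1,\dots,n_d)$ with $\sum_j n_j = n+1$ where we take exactly the first $n_j$ elements of list $j$ (this uses that each $A_j$ is itself increasing, so an optimal selection of the overall smallest $n+1$ elements is a prefix in each list). For that optimal allocation, $b_n$ is the maximum over $j$ of the last selected element $(n_j-1)!_{(T_{u_j},\ell_j,\chi_j)} + (n_j-1)\ell_{\r u_j}$; and any other allocation summing to $n+1$ yields a max that is at least $b_n$, since its selected elements cannot all avoid exceeding the true $(n+1)$-st value. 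This gives both the ``$\le$'' (exhibit the optimal allocation) and the ``$\ge$'' (any allocation's max dominates the true value) directions simultaneously, yielding the equality with the minimum.

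The main obstacle I anticipate is bookkeeping around multiplicities and the off-by-one index conventions: the factorials are $0$-indexed ($n!$ uses $a_n$), the subtree counts $N_j$ bound the ranges, and the constraint $n_1 + \dots + n_d = n+1$ reflects that $d$ initial zeros (the factorials $0!$ of each child shifted by nothing) occupy the first $d$ slots. I would need to verify carefully that the constraint $0 \le n_j < N_j$ correctly captures when a valid prefix selection of size $n_j$ exists in list $j$, and that the total range constraint $n < N_{T,\chi} = \sum_j N_j$ ensures a feasible allocation exists. Ties in the merged ordering (repeated values across or within lists) require that the exchange argument be phrased with non-strict inequalities throughout, which is routine but must be stated carefully so that the minimizing allocation is well-defined even when it is not unique.
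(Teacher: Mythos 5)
Your proposal is correct and takes essentially the same route as the paper: the paper states this min-max formula as a direct consequence of the proof of Theorem~\ref{thm:main1}, where Claim~\ref{claim2.5} establishes that the factorial sequence of $(T,\ell,\chi)$ is the increasing merge of the shifted subtree multisets $A_j$, and your selection lemma is exactly the standard translation of ``the $(n+1)$-st smallest element of a union of sorted lists'' into the stated min-max form. The bookkeeping points you flag (prefix selections, ties handled with non-strict inequalities, feasibility from $N_{T,\chi}=\sum_j N_j$, and the convention for $n_j=0$ terms) are precisely the routine details the paper leaves implicit.
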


\subsection{Proofs of Theorem~\ref{thm:main1-intro} and Theorem~\ref{thm:main1bis-intro}}
We first prove the equivalence of the definition of the factorial sequence given in Section~\ref{sec:locfin-intro} with the one given in this section, thus proving Theorem~\ref{thm:main1bis-intro}. 
\begin{thm} Let $T$ be a locally finite rooted tree, $\ell$ a length function and $\chi$ a capacity function on $T$. Let $\Gamma$ be the metric tree with a model $(T, \ell)$. Let $\alpha_0, \alpha_1,\dots$ be the sequence of numbers associated to $(\Gamma,\chi)$ as in~\eqref{def:fac-intro}. We have for all $n\in \mathbb N$, $(n!)_{(T,\ell,\chi)} = \alpha_n$. 
\end{thm}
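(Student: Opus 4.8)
The plan is to run the greedy boundary construction of~\eqref{def:fac-intro} and the local weighting process side by side through the children of the root, and to show that both are governed by the same merging rule; the identity $(n!)_{(T,\ell,\chi)}=\alpha_n$ then follows by the very induction on $n$, quantified over all triples $(T,\ell,\chi)$, that was used for Theorem~\ref{thm:main1}. Because the greedy choices are arbitrary, this simultaneously yields the well-definedness asserted in Theorem~\ref{thm:main1bis-intro}. First I would dispose of the degenerate cases exactly as in case~(II) of that proof: if $T$ is a single vertex, or $\br(\r)=1$ with an infinite strict path, the ranges are trivial; a root-to-leaf strict path $P$ gives $\alpha_i=i\,\ell(P)$ for $0\le i<\chi(v)$; and a strict path $P$ ending at a branching vertex $v$ shifts every cost by $i\,\ell(P)$ and reduces to $T_v$, using that $\langle\rho,\tau\rangle=\ell(P)+\langle\rho',\tau'\rangle_{T_v}$ for the restricted paths. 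So I may assume $d:=\br(\r)\ge 2$, with children $u_1,\dots,u_d$.

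The computational heart is a decomposition of the intersection pairing across the root. Every $\rho\in\widetilde\partial T$ passes through exactly one child $u_i$ and restricts to an element $\rho'\in\widetilde\partial T_{u_i}$. Two elements through different children share no edge, so their pairing is $0$; two elements through the same $u_i$ satisfy $\langle\rho,\tau\rangle=\ell_{\r u_i}+\langle\rho',\tau'\rangle_{T_{u_i}}$, where $\langle\,,\rangle_{T_{u_i}}$ is the pairing computed inside the subtree (the formula also covering $\rho=\tau$ when a leaf is repeated). Hence, if among the already chosen $\rho_0,\dots,\rho_{n-1}$ exactly $k_i$ pass through $u_i$, with $\sum_i k_i=n$, then for any candidate $\rho$ through $u_i$ one has
\[
\sum_{j=0}^{n-1}\langle\rho,\rho_j\rangle \;=\; k_i\,\ell_{\r u_i}\;+\;\sum_{\rho_j\ \mathrm{through}\ u_i}\langle\rho',\rho_j'\rangle_{T_{u_i}}.
\]
Thus every candidate cost splits into a constant offset $k_i\,\ell_{\r u_i}$ depending only on the child used, plus the cost of its restriction measured inside $T_{u_i}$ against the previously chosen restrictions.

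From this I would extract the central lemma: the subsequence of chosen elements passing through a fixed $u_i$, once restricted to $T_{u_i}$, is itself a factorial-defining sequence of $(T_{u_i},\ell_i,\chi_i)$. Indeed, the offset $k_i\,\ell_{\r u_i}$ is common to all candidates through $u_i$, so whenever the global step picks through $u_i$ it picks, among those, exactly one minimizing $\sum_{\rho_j\ \mathrm{through}\ u_i}\langle\rho',\rho_j'\rangle_{T_{u_i}}$, i.e. a greedy choice in $T_{u_i}$, the first such element playing the role of the arbitrary initial choice. By the induction hypothesis applied to the subtrees $T_{u_i}$, every factorial-defining sequence of $T_{u_i}$ realizes $a^{u_i}_k=k!_{(T_{u_i},\ell_i,\chi_i)}$, so the $k$-th element taken through $u_i$ costs $a^{u_i}_k+k\,\ell_{\r u_i}$ and at every stage $\alpha_n=\min_i\bigl(a^{u_i}_{k_i}+k_i\,\ell_{\r u_i}\bigr)$. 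This is precisely the rule that merges the $d$ nondecreasing sequences $\bigl(a^{u_i}_k+k\,\ell_{\r u_i}\bigr)_{k\ge 0}$ into their sorted union, so $(\alpha_n)$ is that sorted merge. Since the local sequence $(n!_{(T,\ell,\chi)})$ was shown to be the same sorted merge in the proof of Theorem~\ref{thm:main1} (Claims~\ref{claim2.5}--\ref{claim2.6}), the two coincide, closing the induction.

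The step I expect to be the main obstacle is the central lemma, namely that restricting a global greedy sequence to a child really yields a bona fide greedy sequence of the subtree. The care is twofold. First, ties must be handled: several children, or several elements, may realize the minimal cost at once, so the claim must hold for every admissible run rather than a canonical one; here the monotonicity $a^{u_i}_{k}\le a^{u_i}_{k+1}$ of each subtree cost sequence guarantees that the frontier-minimization rule produces the sorted merge no matter how ties are broken. Second, the capacity constraint must localize correctly: I would check that the number of times a leaf of $T_{u_i}$ is used in the global sequence equals its multiplicity in the restricted subsequence, so that the constraint forbidding over-use of $v$ for the global process is exactly the constraint imposed by $\chi_i$ on $T_{u_i}$.
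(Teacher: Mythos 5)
Your proposal is correct, but it takes a genuinely different route from the paper's. The paper does not decompose at the root here: it translates a greedy boundary run $\{\rho_i\}$ directly into a run of the weighting process on the same tree, by maintaining the invariant $(\mathscr Z_n)$ that $\omega_n(uv)$ equals the number of elements among $\rho_0,\dots,\rho_{n-1}$ lying in $\widetilde\partial T_v$ (and that incoming edges of leaves of $T_{\omega_n}$ carry weight one). Under this dictionary, the last vertex $x_n$ of $T_{\omega_n}$ on $\rho_n$ is unsaturated and satisfies $\ell_{\omega_n}\bigl([\r,x_n]\bigr)=\sum_{j<n}\langle\rho_n,\rho_j\rangle$, so each greedy boundary step is literally a legal weighting step with the same cost, and the conclusion follows at once from the well-definedness already proved in Theorem~\ref{thm:main1}. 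You instead re-run, on the boundary side, the root-decomposition induction that proved Theorem~\ref{thm:main1}: the pairing splits across the root, the restriction of a global greedy run to a child is again a greedy run (your central lemma), and both sequences are the sorted merge of the shifted child sequences, matched against Claims~\ref{claim2.5} and~\ref{claim2.6}. Both arguments are sound. The paper's dictionary is shorter (a single induction on $n$ within a fixed tree, no re-derivation of the merge structure), and it is the same correspondence that later makes the averaging measures $\frac 1n\sum_{j}\delta_{\rho_j}$ consistent with $\widetilde\omega_n$ in Section~\ref{sec:growth}; your route duplicates combinatorial work, but in exchange it yields a proof of Theorem~\ref{thm:main1bis-intro} that never mentions weightings and shows explicitly that the boundary definition obeys the same recursive min/merge formula as the local one. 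One bookkeeping point you should make explicit when closing the induction: invoking the inductive hypothesis for a child sequence at index $k_i$ requires $k_i\le n$ at global step $n+1$; this holds because, by positivity of the edge lengths, the first $d$ global picks necessarily pass through $d$ distinct children, which is the exact analogue of the paper's observation that $t\ge d$ in the proof of Theorem~\ref{thm:main1}.
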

\begin{proof} Let $\rho_0, \rho_1, \dots$ be a sequence of elements of the extended boundary $\widetilde \partial \Gamma$ producing $\alpha_0,\alpha_1, \dots$, as in~\eqref{def:fac-intro} in the introduction, i.e., 
\[\alpha_n =\sum_{i=0}^{n-1} \langle \rho_n, \rho_i\rangle.\]
Proceeding by induction on $n$, we show how the sequence $\{\rho_i\}$  define a weighting sequence $\{\omega_i\}$, tree factorials $T_{\omega_i}$, and vertices $\{x_i\}$,  so that we have $a_n =\alpha_n$. The weighting sequence $\{\omega_i\}$ is defined in such a way that for each $n$, the property $\mathscr Z_n$ is verified:

\begin{itemize}
\item[$(\mathscr Z_n)$] $\bullet $ for each edge $uv $ in $T_{\omega_n}$, $\omega_n(uv)$ is the number of elements $\rho$ in the sequence $\rho_0, \dots, \rho_{n-1}$ which belong to the extended boundary $\widetilde \partial T_{v}$, and 
\end{itemize}
\begin{itemize}
\item[]   $\bullet$ For each leaf $v$  of $T_{\omega_n}$, we have $\omega_n(\cev vv)=1$.
 \end{itemize}For $n=0$, the element $\rho_0$ of the extended boundary starts with a strict $P_0$ of $T$, with  one end point $\r$. Let $\omega_1$ be the weighting associated to the choice of $P_0$ in the weighting process, and note that  $(\mathscr Z_1)$  obviously holds. Proceeding recursively, suppose that for $n\geq 1$, $\omega_0, \dots, \omega_n$  and $x_1, \dots, x_{n-1}$ are defined, so that $(\mathscr Z_n)$ holds. Consider $\rho_n \in \widetilde \partial \Gamma$, and define $x_{n}$ as the last vertex of $T_{\omega_n}$ on the path $\rho_n$. By the definition of the sequence $\rho_i$, the vertex  $x_n$ is an unsaturated vertex of $T_{\omega_n}$ and so belongs to $U_n$. In addition, by Property $(\mathscr Z_n)$, we have 
 $$\ell_{\omega_n} = \sum_{e \in [\r, x_n]} \omega_n(e) \ell_e = \sum_{j=0}^{n-1}\langle \rho_n, \rho_j\rangle,$$
 which shows that $x_n$ is a vertex in $U_n$ which minimizes the $\ell_{\omega_{n}}$-distance to the root $\r$. 
 
 Two cases can happen. If $x_n$ is not clear, let $x_ny$ be the pending edge at $x_n$ which belongs to $\rho_n$, and let $P_y$ be the corresponding strict path in $T_{x_n}$. Define $\omega_{n+1}$ as Case (1) in the definition of the weighting process. One easily verifies that $(\mathscr Z_{n+1})$ holds.  
 
 Otherwise, $x_n$ is a clear vertex. If $x_n$ is a leaf of $T$, then define $\omega_{n+1}$ as in Case (2.1) in the the definition of the weighting process. Otherwise, $x_n$ is branching, and is a leaf of $T_{\omega_n}$. By Property $(\mathscr Z_n)$, we have $\omega_{n}(\cev x_{n} x_n)=1$ and so there exists a unique $0\leq j\leq n-1$ so that $\rho_j$ is in $\widetilde \partial T_{x_n}$.  Let $x_nz$ and $x_n w$ be the two pending edges at $x_n$ which belong to $\rho_j$ and $\rho_n$, respectively, and define $\omega_{n+1}$ as in Case (2.2) in the definition of the weighting process. One easily verifies that in both cases, Property $(\mathscr Z_{n+1})$ is verified. 
 \end{proof}

To prove Theorem~\ref{thm:main1-intro}, it will be now enough to prove Proposition~\ref{prop:reduction}.

\begin{proof}[Proof of Proposition~\ref{prop:reduction}]
Let $(T_0, \ell_0,\chi_0)$ be associated to $(T, \ell)$ as in Section~\ref{sec:reduction}. The proof  is based on the observation that for any vertex $v$ of valence infinity in $T$, and for any sequence $\rho_0, \rho_1, \dots$ defining the sequence $a_0, a_1, \dots$ for the pair $(T, \ell)$ as in the introduction,  any two different elements $\rho_i$ and $\rho_j$, for $i\neq j$ which belong to $\partial \widetilde T_v$, must contain two different pending edges $v w$ and $vz$ at $v$.   The sequence $\rho_0, \rho_1, \dots$ gives a sequence $\tau_{0}, \tau_{1}, \dots$, where $\tau_{i}$ is defined by intersecting $\rho_i$ with $T_0$. Using the observation, one  verifies that for any $n$, 
\[a_n = \sum_{j=0}^{n-1} \langle \rho_n, \rho_j\rangle = \sum_{j=0}^{n-1}\langle \tau_{n}, \tau_{j}\rangle. \]
The Proposition now follows from Theorem~\ref{thm:main1bis-intro}, since $\tau_{0}, \tau_{1},\dots$ determines the factorials of $(T_0, \ell_0, \chi_0)$.
\end{proof}

In the rest of this section, we prove some basic fundamental results which will be used in the upcoming sections. 
\subsection{The exhaustiveness of the weighting process}
The following proposition shows the weighting process eventually gives weight to any edge of the tree.  
\begin{prop}\label{prop:cover}
 Let $(T,\ell)$ be a pair consisting of a locally finite tree with a length function $\ell$, and let $\chi$ be a capacity function on $T$. Any sequence of weighting $\{\omega_n\}$ producing the logarithmic
 factorials of $(T,\ell,\chi)$ eventually gives a weight to any edge of the tree $T$. In other words, the union of the trees in any sequence of factorial trees  is the whole tree $T$.
 \end{prop}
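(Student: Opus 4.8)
The plan is to argue by contradiction, handling separately the (easy) case in which the process stops and the case in which it runs forever. Throughout I write $D_n(v):=\ell_{\omega_n}([\r,v])$ for the weighted distance of a vertex $v$ to the root, so that $a_n=D_n(x_n)=\min_{u\in U_n}D_n(u)$; since weights never decrease, the weighted subtrees $T_{\omega_n}$ increase with $n$, and their union $T_\infty$ is a subtree of $T$ containing $\r$. The statement to prove is exactly $T_\infty=T$.

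First suppose the process stops, say $U_N=\varnothing$ for some $N$. I would check that then $T_{\omega_N}=T$: any leaf $\xi$ of $T_{\omega_N}$ is clear, so by Property $(*)$ it is either a leaf of $T$ or branching in $T$; a clear branching vertex is still unsaturated, since it can be processed through Case $(2.1)$, which would contradict $U_N=\varnothing$. Hence every leaf of $T_{\omega_N}$ is a leaf of $T$ at full capacity, and a subtree containing $\r$ all of whose leaves are leaves of $T$ must be all of $T$.

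Now assume the process is infinite and, for contradiction, that some edge is never weighted; then $T_\infty\neq T$, so there is an edge $vw$ with $v=\cev w\in T_\infty$ that is never weighted. Fix $n_0$ with $v\in T_{\omega_{n_0}}$. I first note $v$ must be branching: if $\br(v)=1$ then $w$ is its only child and the strict path that first reached $v$ would not stop at the non-branching vertex $v$ but run on through $vw$, weighting it (the root case $v=\r$ is identical, via the strict path chosen at the first step). Since $v$ is not a leaf of $T$ and keeps the unweighted pending edge $vw$, it stays unsaturated once reached, so $v\in U_n$ and hence $a_n\le D_n(v)$ for all $n\ge n_0$.

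The core of the argument is to bound $D_n(v)$. Write $[\r,v]$ with vertices $\r=t_0,\dots,t_k=v$ and edges $e_j=t_{j-1}t_j$, so $e_k=\cev v v$. I claim each weight $\omega_n(e_j)$ stays bounded, by downward induction on $j$. The base case $j=k$ rests on the key local fact that no strict descendant of $v$ is ever chosen after stage $n_0$: if $x_m$ were such a vertex, then $[\r,v]\subsetneq[\r,x_m]$ with positive extra weighted length would give $a_m=D_m(x_m)>D_m(v)\ge a_m$, which is absurd; combined with the fact that $v$ is chosen at most $\br(v)-1$ times (each choice opens at least one pending edge of $v$ other than $vw$), this bounds $\omega_n(e_k)$, i.e.\ $T_{t_k}=T_v$ receives finitely many chosen vertices. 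For the inductive step, assuming $\omega_n(e_i)\le W_i$ for all $i>j$, put $\gamma_j:=\sum_{i>j}W_i\,\ell_{e_i}<\infty$, so that $D_n(v)\le D_n(t_j)+\gamma_j$ for all $n$. Then for every child $c\neq t_{j+1}$ of $t_j$ and every stage $n\ge n_0$ with $x_n\in T_c$ one has
\[
D_n(t_j)+\omega_n(t_jc)\,\ell_{t_jc}\le a_n\le D_n(v)\le D_n(t_j)+\gamma_j,
\]
so $\omega_n(t_jc)\le\gamma_j/\ell_{t_jc}$ is bounded and each such $T_c$ receives only finitely many chosen vertices. Since $t_j$ is chosen finitely often and $T_{t_{j+1}}$ receives finitely many by the inductive hypothesis, $T_{t_j}$ receives finitely many, i.e.\ $\omega_n(e_j)$ is bounded. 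Hence $D_n(v)$, and therefore $a_n$, is bounded for all $n\ge n_0$.

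To close, I would contradict this bound by exhibiting a subsequence of $(a_n)$ tending to infinity. Since the process is infinite while $\r$ is chosen only finitely often, infinitely many chosen vertices lie in $\bigcup_i T_{u_i}$ over the finitely many children $u_1,\dots,u_{\br(\r)}$ of $\r$; by the pigeonhole principle some $T_{u_i}$ contains infinitely many of them, and each of these increments the edge $\r u_i$, so along that subsequence $a_n\ge\omega_n(\r u_i)\,\ell_{\r u_i}\to\infty$. This contradicts the boundedness of $a_n$, proving $T_\infty=T$, so every edge is eventually weighted. I expect the induction bounding $D_n(v)$—and in particular the local claim that no strict descendant of $v$ is ever processed—to be the main obstacle, since that is exactly where the minimality rule defining $x_n$ must be played off against the branching structure along $[\r,v]$.
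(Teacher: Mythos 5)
Your proof is correct and follows essentially the same strategy as the paper's: argue by contradiction, note that the vertex adjacent to the never-weighted edge is branching and stays unsaturated (so $a_n \le \ell_{\omega_n}([\r,v])$ for all large $n$), bound the weights along the path $[\r,v]$ by a reverse induction that plays the minimality rule against the branching structure, and conclude by a contradiction since the root-edge weights, and hence $a_n$ along a subsequence, must tend to infinity. The differences are only bookkeeping — you count chosen vertices in subtrees where the paper bounds pending-edge weights via the identity $\omega_n(\cev u u)=\sum_{ux}\omega_n(ux)$, and you treat the halting case explicitly where the paper dispatches it in one line.
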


\begin{proof} 
Let $N = N_{T, \chi}$. 
 Let $T_0 = \bigcup_{i=0}^{N_T} T_{\omega_i}$, and for the sake of a contradiction, suppose $E(T)\setminus E(T_0) \neq \emptyset$. Since $T_0$ is a tree, there exists a vertex $v$ in $V(T) \setminus V(T_0)$ adjacent to a vertex $u\in T_0$, so we have $\cev v=u$. 

As already observed before, the first 
 $\br(\r)$ terms of the factorial sequence are all $0$, and the weighting consists in giving weights to the edges of the strict paths of $T$, in particular to all the pending edges of $T$ at $\r$.

 Note that since $u$ is branching, it  remains unsaturated in all the trees $T_{\omega_i}$ which contain $u$. This is impossible if the factorial  sequence is finite, so we can suppose that $N = \infty$.

 \medskip
 
 To simplify the presentation, we will use the usual $O$ notation in what follows: for a sequence of non-negative number $\{f_n\}_{n\in \mathbb N}$, we write $f_n =O(1)$ if there exists a constant $C>0$ such that for all $n\in \mathbb N$, we have $f_n \leq C$. 
  \medskip
 
 Denote by $P=\r u_1 u_2 \dots u_k$ the oriented path $[\r,u]$ from $\r$ to $u_k = u$ in $T$. 
 \begin{claim}\label{claim:boundedness}
 For any $1\leq j\leq k$, and any pending edge $u_jx$ at $u_j$ in $T_0$, the sequence $\Bigl\{\omega_n(u_jx)\, \bigl|\, n\in \mathbb N \textrm{ with } u_jx\in T_{\omega_n} \Bigr\}$ verifies $\omega_n(u_jx) = O(1)$.
 The same statement holds for the sequence $\Bigl\{\omega_n(\r u_1)\, \bigl|\, n\in \mathbb N \textrm{ with } \r u_1\in T_{\omega_n} \Bigr\}$ 
 \end{claim}
 
 We prove the first part of the claim by a reverse induction on $j$.

   Let $x_0, x_1, \dots$ be the factorial-determining sequence corresponding to the edge weighing sequence $\omega_0, \omega_1, \dots$.   Consider first the vertex $u_k = u$, and let $u_kx$ be a pending edge at $u_k$ in $T_0$. We claim that $\omega_n(u_kx)\leq 1$ for all $n$ with $u_k x\in T_{\omega_n}$.  Suppose this is not the case, and consider the integer $m$ such that $x_m$ is chosen in the subtree $T_{x}$, and $\omega_m(u_kx)=1$ and $\omega_{m+1}(u_kx)=2$. We have $\ell_{\omega_m}\bigl([\r,x_m]\bigr) > \ell_{\omega_m}\bigl([\r,u_k]\bigr)$, which shows that the choice of $v$ instead of $x_n$ gives a strictly smaller value for $a_n$ (in the weighting process). This contradiction proves the claim for $j=k$. 

Suppose now that the claim holds for all integers $i$ with $1\leq j<i \leq k$. We prove that the claim holds for integer $j$. Let $u_jx$ be a pending edge at $u_j$ in $T_0$, and suppose that $\omega_n(u_jx)$ tends to infinity, as $n$ tends to infinity. By the hypothesis of the induction, we have $\omega_n(u_iu_{i+1})  = O(1)$ for all $j+1\leq i \leq k$ and for all large enough integers $n$. In addition, since for any large $n$, we have $\omega_n(u_ju_{j+1}) =\sum_{u_{j+1}x \in T_0} \omega_n(u_{j+1}x)$, we infer that $\omega_n(u_j u_{j+1}) = O(1)$. This in particular implies that 
$\ell_{\omega_n}([u_j, u_k]) \leq C$, for some constant $C>0$ and all large enough integers $n$.  Let $m$ be an integer  such that 
$\omega_{m}(u_jx) > C/\ell(u_jx)$ and $\omega_{m+1}(u_jx) = \omega_{m}(u_jx)+1$, which exists by the assumption that $\omega_n(u_jx)$ tends to infinity.  The vertex $x_{m}$ lies in the subtree $T_x$. 
We have 
\begin{align*}
\ell_{\omega_n}\bigl([\r,x_m]\bigr) \geq \ell_{\omega_m}\bigl([\r,x]\bigr) &= \ell_{\omega_m}\bigl([\r,u_j]\bigr)  + \ell_{\omega_m}\bigl([u_j,x]\bigr)  = \ell_{\omega_m}\bigl([\r,u_j]\bigr)   + \omega_m(u_jx) \ell(u_jx) \\
&> \ell_{\omega_m}\bigl([\r,u_j]\bigr)   + C \geq \ell_{\omega_m}\bigl([\r,u_j]\bigr) + \ell_{\omega_m}\bigl([u_j,u_k]\bigr)  \\
&= \ell_{\omega_m}\bigl([\r,u_k]\bigr), 
\end{align*}
which is a contradiction with the choice of $x_m$. This proves the claim for all the pending edges at $u_j$, $j=1,\dots, k$. 
The boundedness of the sequence $\omega_n(\r u_1)$ follows now from the fact that $\omega_n(\r u_1) = \sum_{u_1x \in E(T_0)} \omega_n(u_1x)$ for all large enough integers $n$.

\medskip

  To finish the proof of the proposition, note that since $N_{T, \chi}=\infty,$ we have $\sum_{\r x \in E(T_0)} \omega_n(\r x) \to \infty$, which shows that $n!_{(T,\ell, \chi)} \to \infty$ as $n$ tends to infinity. 
   On the other hand, by the claim we just proved, we have $\ell_{\omega_n}\bigl([\r, u_k]\bigr) \leq C$ for some constant $C>0$ and all large enough integers $n\in\mathbb N$. But this is impossible since at some stage $n$, the weighting process will have the better choice of $v$ instead  of $x_n$.   
\end{proof}

\subsection{Super-additivity of the factorial sequence}
We now prove the following useful proposition.
\begin{prop}\label{prop:supad}
 For any pair triple $(T, \ell,\chi)$  consisting of a locally finite tree $T$, a length function $\ell$ and a capacity $\chi$ on $T$, and for all non-negative integers $n\geq m$, we have 
 $$n!_{(T,\ell,\chi)} \geq (n-m)!_{(T,\ell,\chi)} + m!_{(T,\ell,\chi)}.$$
\end{prop}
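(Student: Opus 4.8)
The plan is to deduce the inequality from the recursive min--max formula stated just above, which I first recast as a statement about merging sorted sequences. Writing $d=\br(\r)$ and $u_1,\dots,u_d$ for the children of $\r$, set
\[F_j(s):= s!_{(T_{u_j},\ell_j,\chi_j)} + s\,\ell_{\r u_j}, \qquad 0\le s< N_j,\]
for each child. Selecting the $n{+}1$ smallest elements of the multiset $\bigsqcup_{j}\{F_j(s)\}$ and reading off their maximum is exactly the content of the min--max formula; hence, if $b_0\le b_1\le\cdots$ denotes the non-decreasing rearrangement of this multiset, then $n!_{(T,\ell,\chi)}=b_n$. I would prove super-additivity by induction on the tree, the inductive hypothesis being that each child's factorial sequence $s\mapsto s!_{(T_{u_j},\ell_j,\chi_j)}$ is super-additive; since adding the linear term $s\,\ell_{\r u_j}$ preserves super-additivity, each $F_j$ is then super-additive as well, and it remains to show that the merge $b$ of super-additive sequences is super-additive.

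This last point is the heart of the argument, and I would isolate it as a lemma: if $F_1,\dots,F_d$ are non-decreasing with $F_j(0)=0$ and each super-additive, then their sorted merge $b$ satisfies $b_{p+q}\ge b_p+b_q$. To prove it I pass to the counting function $M(x):=\sum_{j}M_j(x)$, where $M_j(x)=\#\{\,s:\,F_j(s)\le x\,\}$, so that $b_p=\min\{x:\,M(x)\ge p+1\}$. The key observation is that each $M_j$ is \emph{sub-additive}: if $M_j(x)=p$ and $M_j(y)=q$ (with $p,q<N_j$) then $F_j(p)>x$ and $F_j(q)>y$, whence $F_j(p+q)\ge F_j(p)+F_j(q)>x+y$, forcing $M_j(x+y)\le p+q=M_j(x)+M_j(y)$ (the saturated cases $p=N_j$ or $q=N_j$ being immediate from $M_j\le N_j$). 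Summing over $j$, the function $M$ is sub-additive, and since it is non-decreasing I may, given any $z$ with $0\le z<b_p+b_q$, write $z=z_1+z_2$ with $z_1<b_p$ and $z_2<b_q$ (for instance $z_i$ proportional to $b_p,b_q$) to get $M(z)\le M(z_1)+M(z_2)\le p+q$; as $M(z)=0\le p+q$ for $z<0$ as well, we conclude $b_{p+q}>z$ for every $z<b_p+b_q$, i.e. $b_{p+q}\ge b_p+b_q$. The degenerate case $b_p+b_q=0$, and the case where one of $b_p,b_q$ vanishes, reduce to the monotonicity of $b$.

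Finally I would settle the remaining structural and bookkeeping points. The base cases are immediate: for $T=\{\r\}$ all $a_i$ vanish, and for a rooted path $a_i=i\,\ell(P)$ is linear, hence super-additive. When $\br(\r)=1$ with strict path $P$ ending at a branching vertex $v$, Claim~\ref{claim2.6} gives $a_i=a_i^{T_v}+i\,\ell(P)$, so the case reduces to $T_v$ after the super-additivity-preserving linear shift. For the induction to be well-founded on possibly infinite trees, I expect the main obstacle to be a finite-reduction step: for a fixed $n$ the numbers $a_0,\dots,a_n$ depend only on the finitely many branching vertices reachable from $\r$ in at most $n$ branching steps together with the lengths of the intervening strict paths, so one may truncate $T$ to a finite tree, assigning to each new leaf a capacity equal to the number of extended-boundary points of the cut-off subtree (capped at $n+1$, or $\infty$), without changing $a_0,\dots,a_n$; on the resulting finite tree the induction is simply on the number of vertices. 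Granting this reduction, the merge lemma together with the two elementary cases completes the proof.
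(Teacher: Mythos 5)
Your proof is correct, and its combinatorial core is genuinely different from the paper's. Both arguments start from the same place: the description of $n!_{(T,\ell,\chi)}$ as the $(n+1)$-st smallest element of the sorted merge of the shifted child sequences $F_j(s)=s!_{(T_{u_j},\ell_j,\chi_j)}+s\,\ell_{\r u_j}$ (Claim~\ref{claim2.5}), together with the reduction of the case $\br(\r)=1$ via Claim~\ref{claim2.6}. From there the paper argues directly: fixing $m\le M$, it uses the inductive hypothesis $\mathscr Q_{M-1}$ on the subtrees to exhibit at least $M-m+1$ elements of the merged multiset bounded above by $M!_{(T,\ell,\chi)}-m!_{(T,\ell,\chi)}$, which forces $(M-m)!_{(T,\ell,\chi)}\le M!_{(T,\ell,\chi)}-m!_{(T,\ell,\chi)}$. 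You instead isolate a clean, reusable lemma --- a sorted merge of non-decreasing super-additive sequences vanishing at $0$ is super-additive --- and prove it by showing each counting function $M_j$ is sub-additive and splitting any $z<b_p+b_q$ proportionally; this is more modular and arguably more transparent than the paper's term-counting. The price is the induction bookkeeping: you induct on the tree, which is not well-founded for infinite trees, so you need your truncation step. That step is a genuine proof obligation which you only sketch (one must also cut infinite strict paths into capacity-one leaves, and verify that the first $n+1$ factorials are unchanged --- true because $T_{\omega_k}$ reaches branching depth at most $k$, so cut vertices at depth $n+1$ are never selected in the first $n$ steps), though it can be made rigorous. The paper avoids this issue entirely by inducting on the index $M$ with the property $\mathscr Q_M$ quantified over \emph{all} triples, applying the hypothesis to subtrees at strictly smaller indices. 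Note that your lemma is compatible with that scheme: in its proof one has $M_j(z_1)\le M(z_1)-(d-1)\le p-1$ and likewise $M_j(z_2)\le q-1$, so only super-additivity of the child sequences up to index $p+q-2$ is ever invoked; running your merge lemma inside the paper's induction on the index would combine the best of both approaches and make the truncation unnecessary.
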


 Applying Fekete's Lemma, the proposition implies that
\begin{cor}\label{cor:limit}
 For any triple $(T, \ell,\chi)$ with $N_{T, \chi} =\infty$, the limit of the sequence $\frac 1n (n!)_{(T,\ell,\chi)}$ exists and belongs to the interval $(0,+\infty]$. 
\end{cor}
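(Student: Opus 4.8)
The plan is to feed the super-additivity estimate of Proposition~\ref{prop:supad} into Fekete's lemma, and then to argue separately that the resulting limit is strictly positive. Write $a_n := n!_{(T,\ell,\chi)}$. By the weighting-process definition, each $a_n = \ell_{\omega_n}([\r,x_n])$ is a sum of non-negative terms, so $a_n \geq 0$; and by Proposition~\ref{prop:supad} the sequence is super-additive, $a_{p+q} \geq a_p + a_q$ for all $p,q$. Since $N_{T,\chi} = \infty$, the sequence $\{a_n\}$ is defined for every $n$, so the hypotheses of the super-additive form of Fekete's lemma are met on all of $\mathbb N$.

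First I would record the existence of the limit. Fekete's lemma, applied to the super-additive sequence $\{a_n\}$, gives that $\frac1n a_n$ converges and that
\[
\lim_{n\to\infty}\frac{a_n}{n} \;=\; \sup_{n\geq 1}\frac{a_n}{n}\;\in\;(-\infty,+\infty].
\]
Because every $a_n$ is non-negative, the supremum is non-negative, so the limit already lies in $[0,+\infty]$; we deliberately allow the value $+\infty$, which is why the target interval is $(0,+\infty]$ and not a bounded one.

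It remains to exclude the value $0$, and here the decisive observation is that a single positive term suffices: since the limit equals $\sup_{n}\frac{a_n}{n}$, it is enough to exhibit one index $n$ with $a_n > 0$. I would extract such an index from the structure of the weighting process together with the positivity of $\ell$. After finitely many initial steps the root $\r$ becomes saturated, in the sense that all of its $\br(\r)$ pending edges (a finite set, by local finiteness) have received a weight. Since $N_{T,\chi} = \infty$, the process does not terminate, so there is a later step $n$ at which the chosen vertex $x_n$ lies strictly below the root $\r$. The path $[\r,x_n]$ then contains a root edge $\r u_i$ carrying weight $\omega_n(\r u_i) \geq 1$, whence
\[
a_n \;=\; \ell_{\omega_n}([\r,x_n]) \;\geq\; \omega_n(\r u_i)\,\ell_{\r u_i} \;\geq\; \ell_{\r u_i} \;>\; 0,
\]
using $\ell_{\r u_i} > 0$. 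Therefore $\sup_n \frac{a_n}{n} \geq \frac{a_n}{n} > 0$, and the limit belongs to $(0,+\infty]$.

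The only delicate point I foresee is this strict-positivity step: one must be sure that the weighting process genuinely descends below the root, and this is exactly where the hypothesis $N_{T,\chi} = \infty$ is essential, since it rules out the degenerate configurations (a finite tree, or an infinite ray of valence-two vertices) in which $N_{T,\chi}$ is finite and the sequence could remain identically zero. As an alternative route to the same conclusion, one can invoke the stronger fact established inside the proof of Proposition~\ref{prop:cover}, namely that $N_{T,\chi}=\infty$ forces $a_n \to \infty$; this makes positivity immediate, since $a_n > 0$ for all large $n$ then yields $\sup_n \frac{a_n}{n} > 0$ directly.
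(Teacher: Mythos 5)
Your proof is correct and takes essentially the same route as the paper: the paper derives the corollary in one line by feeding the super-additivity of Proposition~\ref{prop:supad} into Fekete's lemma, exactly as you do. The only difference is that the paper leaves the strict positivity of the limit implicit, whereas you correctly fill in that detail (after the finitely many steps in which the root's pending edges are weighted, every chosen vertex $x_n$ lies below the root, so $a_n>0$), which is a sound and welcome addition rather than a departure.
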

We will later describe the value of the limit. 
\begin{proof}[Proof of Proposition~\ref{prop:supad}]
We prove by induction the following property $\mathscr Q_M$, for non-negative integers $M$.

\medskip

\noindent$(\mathscr Q_M)\,\,\,\,$ \emph{For all triples $(T, \ell,\chi)$ consisting of  a rooted tree $T$, length function $\ell$ and capacity function $\chi$ on $T$, and for all non-negative integers $n\leq M$ and $0\leq m\leq n$, we have  
$$n!_{(T,\ell,\chi)} \geq (n-m)!_{(T,\ell,\chi)} + m!_{(T,\ell,\chi)}.$$}

The result obviously holds for $M=0$. So suppose $\mathscr Q_n$ holds for all $n<M$. We prove $\mathscr Q_M$.

Using Claim~\ref{claim2.6}, we can reduce to  the case where the root is branching, i.e., $d:=\br(\r)\geq 2$. Denote by $u_1, \dots, u_d$ the children of $\r$, and denote by $\ell_j$ and $\chi_j$ the restriction of $\chi$ to $T_{u_j}$, respectively. Let $N_j := N_{T_{u_j},\chi_j}$. For $j=1, \dots, d$, denote by 
$S_{j}:=\bigl\{a_i^{j}\bigr\}_{0\leq i<N_j}$ the set of factorials of $(T_{u_j}, \ell_j, \chi_j)$ with $a_i^{j} := i!_{(T_{u_j},\ell_j, \chi_j)}$. Define $A_j := \bigl\{a_i^j + i  \ell_{\r u_j} \bigr\}_{0\leq i< N_{j}},$ as in the proof of Theorem~\ref{thm:main1}.  

Let $0\leq  m \leq n\leq M$ be two integers. We show the inequality $n!_{(T,\ell,\chi)} \geq (n-m)!_{(T, \ell,\chi)} + m!_{(T, \ell,\chi)}$.  We can suppose that $n=M$, as otherwise, the inequality follows from   the validity of $\mathscr Q_{M-1}$.

By Claim~\ref{claim2.5}, $M!_{(T,\ell,\chi)}$ is the $(M+1)$-st term in the multiset union $A$ of the sets $A_1, \dots, A_d$ when the terms are put in an increasing order. We label each element of the multiset union $A$ with the index $j$ of the set $A_j$ where it comes from, and fix an increasing order on the elements of $A$. In this way we can define positive integer numbers $M_j$, for $j=1,\dots, d$, as the number of terms labeled with $j$ among the first $M+1$ terms. In other words, for each $1\leq j\leq d$, 
$a_i^j+i \ell_{\r u_j}$ for $i=0, \dots, M_j-1$ are among the $M+1$ first terms of $A$. In particular, we have $\sum_{j=1}^d M_j = M+1,$ \textrm{and} 
\begin{equation}\label{eq:sa1}
\textrm{for all $1\leq j\leq d$,}\,\, \,\, M!_{(T,\ell, \chi)} \geq a_{M_j-1}^j+(M_j-1) \ell_{\r u_j},\,\, \textrm{with equality for at least one $j$}.
\end{equation}

Similarly, the $(m+1)$-st term in $A$ is equal to $m!_{(T,\ell, \chi)}$, and we define $m_j$, for each $j=1, \dots, d$, as the number of terms of $A_j$ which appear in the first $m+1$ terms of  $A$. We have $\sum_{j=1}^d m_j = m+1,$ and
\[ \textrm{for all $1\leq j\leq d$,} \,\,\,\, m!_{(T,\ell, \chi)} \geq a_{m_j-1}^j+(m_j-1) \ell_{\r u_j} \,\, \textrm{with equality for at least one $j$}.\]
In addition, since all the terms of the form $a_{m_j}^j+m_j \ell_{\r u_j}$, for $j=1, \dots, d$, appear after the $(m+1)$-st term of the sequence of $A$, it follows that
\begin{equation}\label{eq:sa2}
\forall \,\,\, 1\leq j \leq d, \qquad a_{m_j}^j+m_j \ell_{\r u_j} \geq m!_{(T,\ell, \chi)}.
\end{equation}
Suppose without loss of generality that  $m!_{(T,\ell, \chi)} = a_{m_1-1}^1+(m_1-1) \ell_{\r u_1}$. Note that we have $M_j \geq m_j$ for all $j$.

\medskip

Combining Inequalities~\eqref{eq:sa1} and~\eqref{eq:sa2}, we get
\begin{align*}
M!_{(T, \ell, \chi)} - m!_{(T, \ell, \chi)} &\geq \Bigl(a^1_{M_1-1}+(M_1-1) \ell_{\r u_1}\Bigr) -  \Bigl(a_{m_1-1}^1+(m_1-1)\ell_{\r u_1}\Bigr) \\
&= \Bigl(a^1_{M_1-1} - a_{m_1-1}^1 \Bigr) + (M_1 -m_1) \ell_{\r u_1},
\end{align*}
and for all values of $j\geq 2$ with $M_j> m_j$, we have
\begin{align*}
M!_{(T, \ell, \chi)} -m!_{(T,\ell, \chi)} &\geq \Bigl(a_{M_j-1}^j+(M_j-1) \ell_{\r u_j}\Bigr) - \Bigl(a_{m_j}^j+m_j\ell_{\r u_j} \Bigr)\\
&= \Bigl(a_{M_j-1}^j  - a_{m_j}^j \Bigr) +(M_j- m_j - 1) \ell_{\r u_j}.
\end{align*}

Since $M_j \leq M-1$ for all $j$, by property $\mathscr Q_{M-1}$ applied to the subtrees $T_{u_j}$, we get for all $j\geq 2$ with $M_j > m_j$,
\[\Bigl(a_{M_j-1}^j  - a_{m_j}^j \Bigr) +(M_j- m_j - 1) \ell_{\r u_j} \geq a^j_{M_j-1-m_j} + (M_j- m_j - 1) \ell_{\r u_j}.\]
Moreover, for $j=1$, we have 
\[\Bigl(a^1_{M_1-1} - a_{m_1-1}^1 \Bigr) + (M_1 -m_1) \ell_{\r u_1} \geq a_{M_1-m_1}^1+(M_1-m_1) \ell_{\r u_1}.\]
We infer that all the terms of the form $a_i^j+i \ell_{\r u_j}$ for $j \geq 2$ and $0\leq i\leq M_j-m_j-1$, and all the terms $a_i^1+i \ell_{\r u_1}$ for 
$0\leq i\leq M_1-m_1$ are bounded from above by $M!_{(T, \ell, \chi)} -m!_{(T,\ell, \chi)}$. This shows that 
in the multiset union $A=\bigcup_{j=1}^d A_j$, there are at least 
$$M_1-m_1 +1 + \sum_{\substack{2\leq j\leq d\\
\textrm{such that}\,\,M_j>m_j}} (\,M_j-m_j\,) = 1+ \Bigl(\,\sum_{j=1}^d M_j-m_j\,\Bigr) = M-m+1
$$
terms bounded from above by $M!_{(T, \ell, \chi)} -m!_{(T,\ell, \chi)}$. Since the $(M-m+1)$-st term in the sequence of elements of $A$ is $(M-m)!_{(T,\ell, \chi)}$, we finally get the required inequality
\[M!_{(T,\ell, \chi)} -m!_{(T,\ell, \chi)}\geq (M-m)!_{(T,\ell, \chi)}.\]
\end{proof}

\section{How much information factorial sequence gives about the tree?} \label{sec:hminf} 

 It is natural to ask how much information about the tree $T$ is captured by the factorial sequence and, in particular, whether the factorial sequence associated to $T$ characterizes $T$ uniquely? The question is intimately related to the question of characterizing the sequences of integers which can be realized as factorials associated to a tree. In this section we discuss these questions. Let us make the following definition.

\begin{defn}[Factorial realizability] \rm Let $N \in \mathbb N \cup\{\infty\}$. A sequence  $S = \{a_i\}_{0\leq i<N}$ of increasing non-negative real numbers is \emph{factorial realizable} if there exists a locally finite rooted tree $T$, a length function $\ell$ and a capacity function $\chi$ on $T$ such that for each non-negative integer $0\leq n< N$, we have $a_n = n!_{(T, \ell ,\chi)}$.  
\end{defn}

\subsection{Realizability of sufficiently biased sequences}

 Consider an infinite increasing sequence of positive numbers $S$. Let $d \in \mathbb N$ be a natural number. 
 We can rewrite the elements of $S$ in the form (in an increasing order) 
$$a_{0,1}, \dots, a_{0,d}, a_{1,1}, \dots, a_{1,d}, a_{2, 1}, \dots, a_{2,d^2}, \dots, a_{n,1}, \dots, a_{n,d^n}, \dots $$ 
consisting for each $n$ of $d^n$ reals $a_{n,1}\leq \dots \leq a_{n,d^n}$. 
\begin{defn} \rm An  infinite increasing sequence $S$ is called \emph{$d$-sufficiently biased} if it satisfies:
$$\textrm{for each $n \geq 0$}, \qquad a_{n+1, 1} > 2d^{n+1} \sum_{i=0}^n a_{i,2^i}.$$ 
\end{defn}

Let $d\geq 2$ be an integer. Let $\mathscr T_d$ be the rooted $d$-regular tree, where every node has branching equal to $d$, and for each integer $n\geq 0$, choose an arbitrary total order $\leq_n$ on the vertices of the $\mathscr T_d$ at generation $n$.  Let $\leq$ be the total order on the vertices of $\mathscr T_d$ induced by  total orders $\leq_n$, and by declaring $u<v$ for two vertices $u,v$ of $\mathscr T_d$ provided that the vertex $u$ has generation strictly smaller than that of $v$. 

\begin{defn}\rm Given a collection of total orders $\{\leq_n\}_{n=0}^\infty$ inducing a total order $\leq$ on the nodes of $\mathscr T_d$ as above, and given a length function $\ell:E(\mathscr T_d) \rightarrow \mathbb N$, we say that  $\ell$ and $\leq$ are \emph{coherent} if the following two properties hold in the 
construction of the factorials associated to $(T,\ell)$: 
\begin{itemize}
 \item[(1)] for each non-negative integer $n$, all the vertices in generation $n+1$ are clear as far as there exists an unsaturated vertex in generation $n$;
 \item[(2)] the  order of weighting unweighted pending edges at vertices of generation $n-1$  of $\mathscr T_d$  in the weighting process coincides with the total order $\leq_n$ on generation $n$.
\end{itemize}
\end{defn}

Note in particular that (1) implies that for any $n$, a vertex $x_k$ of $\mathscr T_d$ which gives $k!_{(T,\ell)}$ in the construction of the factorial sequence lies in generation $n$ provided that  $d^{n}\leq k \leq d^{n+1}-1$. 

We have the following theorem. 
\begin{thm}\label{thm:realizability} Let $d\geq 2$ be an integer. For any $d$-sufficiently biased sequence $S$ as above with $a_{0,1}= a_{0,2} =\dots = a_{0,d} = 0$, and any collection $\Bigl\{\leq_n\Bigr\}_{n\in \mathbb N}$ of total orders $\leq_n$ on the $n$-th generation of the $d$-regular tree $\mathscr T_d$, there is a length function $\ell : E(\mathscr T_d) \rightarrow \mathbb R_+$ 
such that 
\begin{itemize}
 \item $\ell$ and $\mathcal O$ are coherent, and
 \item the factorial sequence associated to the pair $(\mathscr T_d, \ell)$ coincides with $S$.
\end{itemize}
 \end{thm}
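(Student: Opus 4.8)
The plan is to construct the length function $\ell$ on $\mathscr{T}_d$ generation by generation, choosing the lengths of the pending edges so that the greedy weighting process is forced to realize exactly the prescribed sequence $S$ in the prescribed order. The governing heuristic is that the ``sufficiently biased'' hypothesis makes the sequence so rapidly increasing that, once the process begins to assign weight to edges at generation $n+1$, it must first completely exhaust (make clear) all vertices at generation $n$; this is precisely coherence property (1). First I would set up the indexing so that the $d^n$ reals $a_{n,1} \leq \dots \leq a_{n,d^n}$ correspond to the $d^n$ vertices of generation $n$, listed according to the chosen total order $\leq_n$. I would then \emph{define} the edge length $\ell(e)$ for a pending edge $e = \cev{v}v$ at a vertex $v$ of generation $n$ by a recursive formula that makes the weighted distance $\ell_{\omega}([\r,v])$ equal to the target value $a_{n-1,\cdot}$ at the appropriate stage; concretely, $\ell(\cev{v}v)$ should equal the increment $a_{n, k} - (\text{weighted length accumulated along } [\r,\cev{v}])$ needed to lift the already-realized factorial of the parent up to the prescribed target. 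Because the tree is exactly $d$-regular, every vertex is branching, so Case (2.1) of the weighting process always applies at a clear vertex, and the recursive min--max formula (the displayed Theorem preceding this subsection) gives the clean bookkeeping.

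The core of the argument is an induction on the generation $n$, proving simultaneously: (a) $\ell$ restricted to generations $\leq n$ is well-defined and positive; (b) the first $\sum_{i=0}^{n} d^i$ terms produced by any coherent run of the weighting process are exactly $a_{0,1},\dots,a_{n,d^n}$ in increasing order; and (c) after these terms are produced, every vertex of generation $n$ is clear (saturated with respect to its two initial pending edges) and every vertex of generation $n{+}1$ has received weight exactly one on its incoming edge, matching property (2) of coherence. The key quantitative input is the bias inequality $a_{n+1,1} > 2 d^{n+1}\sum_{i=0}^{n} a_{i,2^i}$: this guarantees that the smallest \emph{new} target at generation $n{+}1$ strictly exceeds the largest possible value the process could produce while still having any unsaturated vertex at generation $\leq n$. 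Thus the greedy choice of $x_n$ (minimizing $\ell_{\omega_n}([\r,x_n])$ over unsaturated vertices) is forced to clear all of generation $n$ before touching generation $n{+}1$, which is exactly what coherence property (1) asserts and what the remark after the definition records ($d^n \leq k \leq d^{n+1}-1$ forces $x_k$ into generation $n$).

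I would verify positivity of the lengths as part of step (a): the increment defining $\ell(\cev{v}v)$ is a difference of consecutive targets adjusted by accumulated weighted length, and the bias condition — together with the fact that the weights accumulated along $[\r,\cev v]$ are bounded by $\sum_{i=0}^{n} a_{i,2^i}$ because each ancestor edge is weighted at most a controlled number of times before generation $n{+}1$ is reached — keeps this difference strictly positive. The bound $a_{i,2^i}$ appearing in the hypothesis is exactly the quantity controlling the maximal weighted contribution of the $i$-th generation's ancestor edges, since at most $2^i$ descendants must be resolved through any single generation-$i$ edge before the subtree below it is exhausted; this is where the constant $2^i$ (rather than $d^i$) enters, reflecting that the binary branching of the \emph{order of clearing} at each branching vertex, not the full degree $d$, governs the relevant weight count.

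The main obstacle I anticipate is establishing the combinatorial claim in step (c) precisely: namely, that the weight $\omega(\cev v v)$ on any ancestor edge of a vertex resolved at generation $n{+}1$ stays bounded by the quantity $\sum_{i=0}^n a_{i,2^i}$ implicitly controls, and that no ``interleaving'' occurs whereby the process partially weights a generation-$(n{+}1)$ edge and then returns to generation $n$. Ruling out interleaving is the heart of the matter, and it is exactly here that the strict inequality in the definition of $d$-sufficiently biased (with its explicit factor $2d^{n+1}$) is needed: it must dominate not merely the sum of previously realized targets but twice that sum scaled by the full count of generation-$(n{+}1)$ vertices, ensuring a definite gap that survives the worst-case accumulation of weighted ancestor lengths. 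Once this gap estimate is proved, the equidistribution of the greedy choices across generation $n$ in the order $\leq_n$ follows from Case (2.1) of the weighting process together with the induction hypothesis, and conclusions (b) and (2) are then immediate bookkeeping.
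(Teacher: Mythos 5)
Your proposal is, in structure, exactly the paper's proof: assign the value $a_{n,k}$ to the $k$-th vertex of generation $n$ in the order $\leq_n$, define the length of each new pending edge by the recursive equation (target value) minus (weighted length accumulated along the path to the parent at the moment the vertex is processed), and prove positivity, coherence, and the matching of the factorial sequence by induction on the generation, with the bias inequality supplying the gap that rules out interleaving between generations. The paper makes the accumulated term explicit---when $u_{n+1,i}$ is reached, the weight on its generation-$j$ ancestor edge $v_{j-1}v_j$ is exactly $d^{n+1-j}+f_{i,j}$, where $f_{i,j}\leq d^{n+1-j}$ is the number of already-processed generation-$(n+1)$ vertices below $v_j$---and then bounds $\sum_{j=1}^n\bigl(d^{n+1-j}+f_{i,j}\bigr)\ell(v_{j-1}v_j)\leq d^{n+1}\sum_{j=1}^n a_{j,d^j}<\tfrac12 a_{n+1,1}$.

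The one point where your reasoning goes wrong is the interpretation of the exponent in $a_{i,2^i}$: your claim that at most $2^i$ descendants must be resolved through any single generation-$i$ edge, so that a binary branching of the clearing order governs the weight count, is false. The counting is $d$-ary: while generation $n+1$ is being cleared, a generation-$j$ ancestor edge carries weight between $d^{n+1-j}$ and $2d^{n+1-j}$ (the factor $2$ comes from $f_{i,j}\leq d^{n+1-j}$, not from any binary structure), and consequently your stated bound $\sum_{i=0}^n a_{i,2^i}$ on the accumulated weighted length is wrong as written---the correct bound must carry the multiplicities $d^{n+1-j}$ and uses the top terms $a_{j,d^j}$, exactly as in the paper's displayed inequality. (The $2^i$ in the paper's definition of $d$-sufficiently biased, like the range $1\leq k\leq 2^j$ in property $(\mathscr L_n)$, is evidently a typo for $d^i$; the proof itself works with $a_{j,d^j}$.) This slip is repairable and does not change the architecture of your argument, but as stated your positivity estimate in step (a) would not go through.
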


 \begin{proof} We describe how to construct the length function by induction.  
 
 For each $n\in \mathbb N$, denote by $T_n$ the subtree of $\mathscr T_d$ consisting of all the vertices at generation not exceeding $n$. Denote by $u_{n,1} <_n \dots <_n u_{n,d^n}$ 
 all the vertices of generation $n$ in an increasing order with respect to the total order $\leq_n$. For each $1\leq i \leq d^n$, denote by $e_{n,i}$ the unique edge of $T$ joining a vertex of generation $n-1$ to $u_{n,i}$. 
 So, for example, $u_{1,1} <_1 u_{1,2} < \dots < u_{1,d}$ are the $d$ vertices of $\mathscr T_d$ adjacent to the root $\r$, and we have $e_{1,1} = \r u_{1,1}, \dots, e_{1,d} = \r u_{1,d}$. 
  
  \medskip
  
 First consider $n=1$. Define for each $i=1,\dots, d$, the length of $e_{1,i}$ by $\ell(e_{1,i}) := a_{1,i}$. Proceeding inductively, suppose that the length of all the edges in the tree $T_n$ have been already assigned, and that the lengths of edges of $T_n$ verify the following property $(\mathscr L_n)$
\[(\mathscr L_n)\qquad \qquad  \forall\,\,\,  1\leq j \leq n \textrm{ and }\,\, 1\leq k \leq 2^j\,,\,\qquad {\frac{a_{j,1}}2} \leq \ell(e_{j,k}) \leq a_{j,k}\,.\]

We now explain how to define $\ell(e_{n+1, i})$ for all $1\leq i \leq d^{n+1}$, ensuring the property $(\mathscr L_{n+1})$ as well. 

Fix an $1\leq i\leq d^{n+1}$, and let $P_i = v_0 v_1 v_2 \dots v_n v_{n+1}$ be the unique path from the root $\r =v_0$ to $v_{n+1} = u_{n+1, i}$. Note that $e_{n+1,i} = v_{n}v_{n+1}$.
For each $1\leq j \leq n$, 
let $f_{i,j}$ be the number of descendants of $v_j$ among the vertices $u_{n+1, 1}, \dots, u_{n+1, i-1}$. Obviously, we have $0\leq f_{i,j} \leq d^{n+1-j}$ for any $1\leq j \leq n $, and, we have $f_{1,1} = \dots, = f_{1,n}=0$.
Define $\ell(e_{n+1,i})$ by the recursive equation
\[\ell(e_{n+1, i} )+ \sum_{j=1}^n \bigl(d^{n+1-j}+f_{i,j}\bigr)\ell(v_{j-1}v_j) = a_{n+1,i}.\]
 By property $(\mathscr L_n)$, since $S$ is $d$-sufficiently biased, we have 
\[\sum_{j=1}^n \bigl(d^{n+1-j}+f_{i,j}\bigr)\ell(v_{j-1}v_j) \leq d^{n+1} \sum_{j=1}^n a_{j, d^j} < \frac12 a_{n+1,1} \leq a_{n+1,i},\]
which ensures that  
$$ \frac{a_{n+1,1}}2\leq \ell(e_{n+1, j}) \leq  a_{n+1,i}.$$

To prove that $\ell$ and $\mathcal O$ are coherent, and that the factorial sequence associated to $(T,\ell)$ coincides with $S$ one can proceed by induction. The details are straightforward and are left to the reader.     
 \end{proof}

\subsection{Module of definition of the length function} 
Let $S$ be an increasing sequence of positive numbers. Denote by $\mathbb Z\langle S \rangle$ the $\mathbb Z$-submodule of $\mathbb R$ generated by the elements of $S$. We have the following proposition.
\begin{prop}\label{prop:mdef} Let $S$ be an increasing sequence of positive numbers realizable by a pair $(T, \ell)$. For any edge $e\in E(T)$, we have $\ell(e) \in \mathbb Z \langle S \rangle$.
\end{prop}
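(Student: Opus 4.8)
The plan is to prove the statement by induction on the tree, reconstructing each edge length as an explicit difference of two factorials. Two normalizations will simplify the bookkeeping. First, since $0!_{(T,\ell)}=0\in\mathbb Z\langle S\rangle$, the constant $0$ is available for free. Second, I will reduce the case $\br(\r)=1$ to the branching case: if $P$ denotes the initial strict segment of $T$ from $\r$ to the first branching vertex or leaf $v$, then Claim~\ref{claim2.6} gives $a_i=a_i^v+i\,\ell(P)$, so that $\ell(P)=a_1-a_1^v=a_1$ (because $a_1^v=0$) and $a_i^v=a_i-i\,a_1\in\mathbb Z\langle S\rangle$ for all $i$; hence it suffices to treat $T_v$, and I may assume from the outset that $d:=\br(\r)\ge 2$, with children $u_1,\dots,u_d$ joined to $\r$ by edges of length $\ell_j:=\ell_{\r u_j}$.

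Next I will fix a factorial-determining sequence and its associated weighting $\{\omega_n\}$ for $(T,\ell)$. By Claim~\ref{claim2.5}, each factorial $a_n$ is charged to exactly one child subtree $T_{u_j}$ — the one containing the chosen vertex $x_n$ — and equals $a_n=a^{u_j}_{k}+k\,\ell_j$ with $k=\omega_n(\r u_j)$. Collecting, in order, the factorials charged to a fixed $T_{u_j}$ therefore recovers precisely the sequence $\beta^j_i:=a^{u_j}_i+i\,\ell_j$, $0\le i<N_j$, each term of which is an element of $S$. Since $\beta^j_0=a^{u_j}_0=0$ and, by Property $(*)$, $u_j$ is either branching or a leaf, we have $a^{u_j}_1=0$, whence
\[
\ell_j=\beta^j_1-\beta^j_0\in\mathbb Z\langle S\rangle,
\qquad\text{and therefore}\qquad
a^{u_j}_i=\beta^j_i-i\,\ell_j\in\mathbb Z\langle S\rangle
\]
for every $i$. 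In particular the factorial sequence $\{a^{u_j}_i\}_i$ realized by the triple $(T_{u_j},\ell_j,\chi_j)$ is contained in $\mathbb Z\langle S\rangle$, so that $\mathbb Z\langle\{a^{u_j}_i\}_i\rangle\subseteq\mathbb Z\langle S\rangle$.

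Finally I will conclude by induction: applying the inductive hypothesis to each $(T_{u_j},\ell_j,\chi_j)$ shows that every edge of $T_{u_j}$ has length in $\mathbb Z\langle\{a^{u_j}_i\}_i\rangle\subseteq\mathbb Z\langle S\rangle$, and combined with $\ell_j\in\mathbb Z\langle S\rangle$ for the edges incident to the root this gives $\ell(e)\in\mathbb Z\langle S\rangle$ for every edge $e$ of $T$. The main obstacle is the identity $\ell_j=\beta^j_1-\beta^j_0$: it hinges on every relevant edge being visited at least twice by the weighting process, so that its length surfaces as a genuine difference of two factorials rather than being absorbed silently into a single $a_n$. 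This is automatic whenever the lower endpoint of the edge is branching or an unsaturated leaf, and the delicate point to secure is the control of a pending edge whose lower endpoint becomes saturated after a single visit; establishing that $\beta^j_1$ exists for each $j$ (equivalently $N_j\ge 2$) and that the recovered quantities $\ell_j$ and $a^{u_j}_i$ do not depend on the non-unique choices made in the process is exactly what Claims~\ref{claim2.5} and~\ref{claim2.6} are designed to provide, and is what completes the induction.
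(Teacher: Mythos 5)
Your route is genuinely different from the paper's. The paper's proof fixes one weighting sequence, invokes exhaustiveness (Proposition~\ref{prop:cover}) to find, for each vertex $v$, the smallest $n$ with $x_n=v$, observes that at that stage $\omega_n(\cev{v}v)=1$, and recovers $\ell(\cev{v}v)=n!_{(T,\ell)}-\ell_{\omega_n}([\r,\cev{v}])$; an induction on the generation of $v$ then places this in $\mathbb Z\langle S\rangle$, since the subtracted term is an integer combination of lengths of edges above $v$. You instead peel off the root using the merge structure of Claims~\ref{claim2.5} and~\ref{claim2.6}: each pending edge length at the root is recovered as a single factorial, $\ell_{\r u_j}=\beta^j_1$, and the subtree sequences $a^{u_j}_i=\beta^j_i-i\,\ell_{\r u_j}$ stay inside $\mathbb Z\langle S\rangle$, so the recursion descends level by level. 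This is a legitimate alternative and yields the sharper byproduct that root-pending lengths are elements of $S$ itself; note, though, that your statement that collecting the factorials charged to $T_{u_j}$ recovers \emph{precisely} the $\beta^j_i$ also quietly uses exhaustiveness, so you do not entirely avoid Proposition~\ref{prop:cover}.

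Two of your justifications are wrong, however, and exactly at the delicate points. (a) Property~$(*)$ does not imply that a child $u_j$ of the root is branching or a leaf: it concerns only the clear vertices (the leaves) of the factorial trees $T_{\omega_n}$, and says nothing about children of the root. Such a child may have valence two, and then $a^{u_j}_1$ equals the length of the strict path of $T_{u_j}$, not $0$, so $\ell_{\r u_j}=\beta^j_1$ fails; in that situation the proposition itself fails (subdivide an edge of $\mathscr T_d$ into two halves: all factorials remain integers, while the half-lengths are not in $\mathbb Z\langle S\rangle$). The step is correct only under the implicit assumption that every internal vertex of $T$ is branching --- which holds in the paper's only application, Proposition~\ref{prop:neg}, and which the paper's own proof also tacitly needs, since its assertion that every vertex occurs as some $x_n$ likewise fails for valence-two vertices --- but it must be assumed, not deduced from Property~$(*)$. (b) Claims~\ref{claim2.5} and~\ref{claim2.6} do not ``provide'' $N_j\geq 2$. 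If $u_j$ is a leaf of capacity one, then $N_j=1$, $\beta^j_1$ does not exist, and $\ell(\r u_j)$ never enters any factorial, so it need not lie in $\mathbb Z\langle S\rangle$ at all; this failure mode, which you correctly single out, can only be excluded by hypothesis (no capacity-one leaves), not resolved by the two claims. The same caveat applies to your reduction when $\br(\r)=1$: it recovers the total length $\ell(P)$, never the individual edges of $P$. With these hypotheses made explicit, your induction is sound and stands as a genuine alternative to the paper's argument.
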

\begin{proof} Let $x_1, x_2, \dots$ be a factorial-determining sequence of vertices for $(T, \ell)$. By exhaustiveness of the weighting process, for each vertex $v$ of the tree, there exists an integer $n$ so that $x_n=v$, that we suppose in addition to be the smallest such $n$.  Write the path $[\r,v]$ in $T$ as $v_0v_1\dots v_k$ with $v_0=\r$ and $v_k=v$. Since $\omega_n(v_{k-1}v_k)=1$, we have 
$n!_{(T,\ell)} = \ell_{\omega_n}([\r,v]) = \ell(v_{k-1}v_k) + \ell_{\omega_n}([\r, v_{k-1}])$. 
Which gives
\[ \ell(v_{k-1}v_k)  =  n!_{(T,\ell)} -  \ell_{\omega_n}([\r, v_{k-1}]).\]
Using this observation, a straightforward induction gives  $\ell(e) \in \mathbb Z\langle S\rangle$ for any $e \in E(T)$.
\end{proof}
\subsection{Two non-isomorphic trees with the same factorial sequence}
The following direct corollary of Theorem~\ref{thm:realizability} and Proposition~\ref{prop:mdef} shows that the factorial sequence cannot determine the tree in general.
\begin{prop}\label{prop:neg}
 There are non-isomorphic trees $T_1$ and $T_2$ with the same factorial sequence.
\end{prop}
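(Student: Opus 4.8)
The plan is to produce two concrete non-isomorphic trees realizing the same factorial sequence by exploiting the freedom in Theorem~\ref{thm:realizability} together with the rigidity constraint of Proposition~\ref{prop:mdef}. First I would fix an integer $d \geq 2$ and a $d$-sufficiently biased sequence $S$ with $a_{0,1} = \dots = a_{0,d} = 0$. By Theorem~\ref{thm:realizability}, for \emph{any} choice of a collection of total orders $\{\leq_n\}_{n \in \mathbb N}$ on the generations of the $d$-regular tree $\mathscr T_d$, there exists a length function $\ell$ on $\mathscr T_d$ which is coherent with the induced order and whose associated factorial sequence is exactly $S$. This gives one realization $(\mathscr T_d, \ell)$ of $S$.

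The key idea is then to realize $S$ by a tree that is \emph{not} isomorphic (as a rooted tree, and a fortiori giving a metrically distinct model) to $(\mathscr T_d, \ell)$. One natural approach is to choose $S$ so that the length values forced by Proposition~\ref{prop:mdef} to lie in $\mathbb Z\langle S\rangle$ can be matched by a structurally different tree. Concretely, I would take a second realization of $S$ using a tree $T_2$ whose branching pattern differs from the pure $d$-regular one --- for instance by subdividing certain edges of $\mathscr T_d$ (inserting valence-two vertices, which changes the combinatorial tree but can be absorbed into the metric realization) or by taking $d' \neq d$ and constructing, via a second application of Theorem~\ref{thm:realizability} or of the recursive min-max formula, a $d'$-ary tree whose factorial sequence is again $S$. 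Since Theorem~\ref{thm:main1}$(ii)$ tells us factorials depend only on the metric realization together with the branching data, subdivisions of $(\mathscr T_d, \ell)$ automatically preserve the factorial sequence while producing a non-isomorphic combinatorial tree $T_2$; this already suffices if ``non-isomorphic'' is read at the level of combinatorial trees.

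The step I expect to be the main obstacle is ensuring the two trees are genuinely non-isomorphic in whatever sense is intended (combinatorial rooted trees versus metric realizations). If mere subdivision is deemed too cheap --- because it does not change the metric realization $\Gamma$ --- then I would instead produce two trees with \emph{distinct} metric realizations sharing the same factorial sequence. For this I would pick $S$ sufficiently biased and build two coherent length functions on trees of different regularity, say $\mathscr T_2$ and $\mathscr T_3$, each realizing $S$ by Theorem~\ref{thm:realizability}; the sufficiently-biased hypothesis guarantees the factorial sequence is determined generation-by-generation in the same way regardless of the arity, so both yield $S$. The branching numbers (or generation sizes) of $\mathscr T_2$ and $\mathscr T_3$ differ, so no isomorphism of rooted trees can exist. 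I would then invoke Proposition~\ref{prop:mdef} only to confirm consistency of the length assignments, and conclude directly from Theorem~\ref{thm:realizability} that both trees realize $S$, establishing Proposition~\ref{prop:neg}.

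The cleanest write-up, which I would adopt, is to state an explicit $S$ (for instance a rapidly growing geometric-type sequence satisfying the bias inequality) and to exhibit the two trees $\mathscr T_2$ and $\mathscr T_3$ together with their coherent length functions from Theorem~\ref{thm:realizability}; the verification that both give $S$ is then immediate from that theorem, and the non-isomorphism is visible from the distinct branching. The delicate point to get right in the full proof is checking that a single sequence $S$ can be simultaneously $2$-sufficiently biased and $3$-sufficiently biased, which is harmless since one can simply take $S$ biased enough to satisfy the stronger of the two inequalities for every $n$.
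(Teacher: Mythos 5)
Your proposal has a genuine gap, and it sits exactly at the point you flagged as ``delicate.'' The fix you rely on --- realizing one sequence $S$ on both $\mathscr T_2$ and $\mathscr T_3$ --- is impossible, not merely delicate. For any tree with a (strictly positive) length function and standard capacity, the factorial sequence begins with \emph{exactly} $\br(\r)$ zeros: the root stays unsaturated at weighted distance $0$ until all of its pending edges are weighted, and after that every unsaturated vertex lies at strictly positive weighted distance. Hence any sequence realized on $\mathscr T_2$ has exactly two initial zeros, while any sequence realized on $\mathscr T_3$ has exactly three; no sequence can be realized on both, no matter how biased you take it. (Relatedly, the claim that the bias hypothesis makes the factorials ``determined generation-by-generation in the same way regardless of the arity'' is false: under coherence, generation $n$ contributes a block of $d^n$ terms, and this block structure depends on $d$.) Your other prong --- subdividing edges of $(\mathscr T_d,\ell)$ --- only produces a different combinatorial \emph{model} of the same metric tree, i.e.\ a pair $(T_2,\ell_2)$ with the same metric realization; as you yourself note, this is the cheap reading. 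It does not produce two trees with the \emph{standard} length function sharing a factorial sequence, which is what the proposition means (the notation $n!_T$ without a length function refers to $\ell\equiv 1$), so neither prong establishes the statement.

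The paper's proof stays on a \emph{single} tree $\mathscr T_d$ and instead varies the collection of total orders $\{\leq_n\}$: Theorem~\ref{thm:realizability} applied to one integer-valued $d$-sufficiently biased sequence $S$ with two different order collections yields two length functions $\ell$ and $\ell'$ on $\mathscr T_d$, both realizing $S$. Proposition~\ref{prop:mdef} then forces $\ell$ and $\ell'$ to be integer valued (since $S\subset\mathbb Z$), so the metric realizations $\Gamma$ and $\Gamma'$ each admit a model with the standard length function (subdivide each edge of length $k$ into $k$ unit edges). Choosing $S$ so that $\Gamma$ and $\Gamma'$ are non-isomorphic metric trees, these standard models give the desired non-isomorphic trees $T_1$ and $T_2$ with $n!_{T_1}=n!_{T_2}=a_n$ for all $n$, by Theorem~\ref{thm:main1}$(ii)$. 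If you want to salvage your write-up, replace the $\mathscr T_2$-versus-$\mathscr T_3$ step by this ``same tree, two order collections'' device, and keep the integer-valuedness step --- it is not a consistency check but the ingredient that lets you pass from metric trees to honest standard-length trees.
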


\begin{proof}
Consider a $d$-regular tree with $d\geq 2$. For any $d$-sufficiently biased sequence $S$ of integers, and  any collection of total orders on the $n$-th generation of the $d$-regular tree $\mathscr T_d$, for $n\in \mathbb N$, there is a length function $\ell$ such that factorial sequence of $(T, \ell)$ coincides with $S$. Changing the total orders $\leq_n$ associates another length function $\ell'$ with the same factorials. Note that by Proposition~\ref{prop:mdef}, the length function $\ell$ and $\ell'$ are integer valued. 

However, one can easily construct 
a $d$-sufficiently biased sequence such that the two metric trees $\Gamma$ and $\Gamma'$ associated to $(T, \ell)$ and  $(T, \ell')$, respectively, become non-isomorphic. Each of $\Gamma$ and $\Gamma'$ has a model with a standard metric (i.e., with length function equal to one on edges). This results in two non-isomorphic trees $T_1$ and $T_2$ with the same factorials.\end{proof}

\begin{remark}\rm A variant of the  construction of Theorem~\ref{thm:realizability} leads to the following stronger statement. Let $T_1, T_2$ be any pair of infinite rooted  locally finite trees, with roots $\r_1$ and $\r_2$, respectively, so that $\br(\r_1) = \br(\r_2)$. Suppose that all the vertices of $T_1$ and $T_2$ are branching.
There exist length functions $\ell_1: E(T_1) \rightarrow \mathbb N$ and 
$\ell_2: E(T_2) \rightarrow \mathbb N$ so that the factorial sequences associated to $(T_1, \ell_1)$ and $(T_2, \ell_2)$ coincide. The proof goes as follows. 
One shows that for any infinite locally finite tree $T$ in which every vertex is branching, and for any fixed total order on the vertices of generation $n$,  and 
any appropriately biased sequence $S$ of integers with respect to $T$ (a modification of the definition for $d$-regular trees which takes into account the structure of $T$), there exists a length function $\ell: E(T) \rightarrow \mathbb N$ such that the factorial sequence associated to $(T, \ell)$ coincides with $S$. For any sequence which is biased for both $T_1, T_2$, this leads to the statement. 
Since we do not have any utility for this stronger version, we omit the detailed proof.
\end{remark}

\subsection{The case of two trees one included in the other}
In this section, we prove that, not surprising, if one of two trees is included in the other one, and the two factorial sequences are the same, then the two trees are the same.

\begin{prop} Let $(T,\ell,\chi)$ and $(T',\ell',\chi')$ be two triples consisting of rooted locally finite trees with length and capacity functions, so that  
  $T \subseteq T'$, and the restriction of $\ell'$ (resp. $\chi'$) to $T$ coincides with $\ell$ (resp. $\chi$), and so that $(T, \ell,\chi)$ and $(T', \ell',\chi')$ have the same factorial sequence. Then the inclusion induces an isomorphism $T = T'$ (and so $\ell = \ell'$ and $\chi=\chi'$).
\end{prop}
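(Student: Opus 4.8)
The plan is to argue by contradiction: suppose the inclusion $T \subseteq T'$ is proper, so there exists an edge $e = uv \in E(T') \setminus E(T)$ with $u \in V(T)$. Since $T$ is a subtree containing the root, I may choose $e$ so that its initial vertex $u$ already lies in $T$; the descendant subtree $T'_v$ then contributes to the factorial sequence of $(T',\ell',\chi')$ but not to that of $(T,\ell,\chi)$. The strategy is to exploit the exhaustiveness of the weighting process (Proposition~\ref{prop:cover}) together with super-additivity/growth to show that this extra branch forces the two factorial sequences to differ, contradicting the hypothesis.

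First I would treat the question of whether the sequences are finite or infinite. If $T$ has $N_{T,\chi} < \infty$, then the extra edge $e$ and its subtree strictly increase $N_{T',\chi'}$ (via Formula~\eqref{form:nt}, since either a new branching vertex or a leaf of larger capacity is created relative to $T$), so the two sequences have different lengths and cannot coincide. The substantive case is therefore $N_{T,\chi} = N_{T',\chi'} = \infty$. Here I would compare the weighting processes directly. Because the restriction of $\ell',\chi'$ to $T$ equals $\ell,\chi$, and because the choices in the weighting process depend only on the root, the branching vertices, the leaves, and the weighted lengths of strict paths of descendant subtrees (as used in the proof of Theorem~\ref{thm:main1}$(ii)$), any factorial-determining sequence for $(T,\ell,\chi)$ is an \emph{admissible but possibly non-greedy} sequence of choices inside $(T',\ell',\chi')$: it simply never selects vertices in the new subtree $T'_v$. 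The key point is that in $T'$ the greedy process may sometimes prefer a vertex inside $T'_v$, and this preference can only lower the corresponding $a_n$.

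The main step is then to make this comparison quantitative. I would invoke Proposition~\ref{prop:cover} applied to $T'$: any factorial-determining sequence for $(T',\ell',\chi')$ eventually gives weight to every edge of $T'$, in particular to the edge $e = uv$. Let $n$ be the first index at which the greedy process for $T'$ selects a vertex in $T'_v$; at that stage the available value $\ell'_{\omega_n}([\r, v])$ is finite and enters the competition for the minimum. The recursive min-max formula (the Theorem following the proof of Theorem~\ref{thm:main1}) expresses each factorial of $T'$ as a minimum over distributions of the indices among the children subtrees, and the presence of the additional subtree $T'_v$ enlarges the index set over which the minimum is taken compared to $T$. Enlarging the feasible set in a minimization can only decrease (weakly) the resulting value, and I would show it decreases strictly at the first stage where $T'_v$ becomes the optimal branch to feed — which must occur, by exhaustiveness, since $e$ eventually receives weight. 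This produces an index $n$ with $n!_{(T',\ell',\chi')} < n!_{(T,\ell,\chi)}$, the desired contradiction.

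The hard part will be pinning down the strict inequality cleanly, i.e.\ ruling out the degenerate possibility that the extra subtree $T'_v$ only ever ties, never strictly improves, the greedy minimum. I expect this to require a careful argument that the first time $e$ is weighted, the value $\ell'_{\omega_n}([\r,v])$ is \emph{strictly} smaller than the value the process would otherwise have produced within $T$ alone, using the positivity of the length function and the super-additive growth from Corollary~\ref{cor:limit} to guarantee that the $T$-only values diverge while a competing value through $e$ remains available; formalizing "feeding the extra branch strictly lowers some $a_n$" via the min-max formula, while bookkeeping the capacities at leaves correctly, is where the real care is needed.
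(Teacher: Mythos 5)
Your architecture --- argue by contradiction, pick an edge $e=uv\in E(T')\setminus E(T)$ with $u\in V(T)$, and use the exhaustiveness of the weighting process (Proposition~\ref{prop:cover}) --- is the same as the paper's, but the step you yourself flag as ``the hard part'' is a genuine gap, and it cannot be closed along the lines you sketch. You want to extract an index $n$ with $n!_{(T',\ell',\chi')} < n!_{(T,\ell,\chi)}$ by arguing that at the first stage where the greedy process in $T'$ weights $uv$, the value through $uv$ \emph{strictly} beats every $T$-only alternative. But the greedy process selects a vertex whose weighted distance is less than \emph{or equal to} all alternatives (note also that the competing value is $\ell'_{\omega_n}([\r,u])$, not $\ell'_{\omega_n}([\r,v])$, since $uv$ is unweighted at that moment); nothing forces strictness, and under your contradiction hypothesis strictness is in fact impossible: by the well-definedness Theorem~\ref{thm:main1}, every valid greedy prefix in $T'$ continues with the value $n!_{(T',\ell',\chi')}$, which by hypothesis equals $n!_{(T,\ell,\chi)}$, so at every single step the best option through $T'_v$ at most \emph{ties} the best option inside $T$. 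Your fallback --- positivity of lengths and super-additive growth making the $T$-only values diverge ``while a competing value through $e$ remains available'' --- also fails, because that competing value is not fixed: $\ell'_{\omega_n}([\r,u])$ grows as the edges of $[\r,u]$ accumulate weight, in step with the factorials themselves. So no step with a strict drop can be exhibited, and the min-max formula only gives the weak inequality you already noted.

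The paper's proof turns the tie phenomenon into the contradiction instead of fighting it, by running the comparison in the opposite direction. Take a weighting sequence $\{\omega_n\}$ producing the factorials of $(T,\ell,\chi)$. By induction on $n$, the equality of the two factorial sequences together with Theorem~\ref{thm:main1} applied to $T'$ shows that this \emph{same} run is a valid greedy run for $(T',\ell',\chi')$: the set of unsaturated vertices of $T'$ contains that of $T$, the minimum of the weighted distances over the larger set is $n!_{(T',\ell',\chi')}$ (independently of the prefix, by well-definedness), and the vertex chosen in $T$ attains $n!_{(T,\ell,\chi)}$, which equals that minimum; hence the choice remains legitimate in $T'$. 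Now apply Proposition~\ref{prop:cover} to this run \emph{as a run in $T'$}: it must eventually give weight to every edge of $T'$, yet it never weights $uv$. That is the contradiction --- no strict inequality of values is needed, and this is exactly the upgrade from ``admissible'' to ``greedy'' that your proposal mentions but does not supply; it is precisely what the hypothesis of equal factorials buys. (Your preliminary case $N_{T,\chi}<\infty$ via Formula~\eqref{form:nt} is correct but becomes unnecessary: if the $T$-run terminates, the vertex $u$ is still unsaturated in $T'$ once it has entered the weighted tree, so the $T'$-sequence is strictly longer and the two sequences already differ.)
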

\begin{proof} For the sake of a contradiction, assume $T\subsetneq T'$ and $n!_{(T, \ell,\chi)} = n!_{(T',\ell',\chi')}$ for all $n \in \mathbb N$.  There exists a vertex $v$  of $T'\setminus T$ such that  $\cev v$ belongs to $T$.  
Consider  a sequence of weighting $\omega_n$ for $T$ 
which provides the factorial sequence for $(T,\ell)$, as in Section~\ref{sec:def}. By the equality of the factorial sequences of $(T, \ell,\chi)$ and $(T',\ell',\chi')$, and since 
$T \subset T'$ and the length and capacity functions coincide on $T'$, the same weighting sequence provides the factorial sequence in $T'$. 
This is however impossible since by Proposition~\ref{prop:cover} any weighting sequence eventually gives a 
weight to any edge of the tree $T'$, while the edge $\cev vv$ in $T'$ clearly remains weightless in the sequence
$\omega_n$. 
\end{proof}

Let $d\geq 2$ be an integer. Consider the family $\mathcal T_d$ of all locally finite trees $T$ containing only branching vertices with $2\leq \br(v)\leq d$ for any vertex $v$. 

\begin{question} \emph{Prove or disprove:} for any pair of trees $T_1, T_2\in \mathcal T_d$ with the same factorial sequence, the two trees $T_1$ and $T_2$ are isomorphic.
\end{question}

\section{Growth of the factorial sequence: transience and equidistribution}\label{sec:growth}
The examples given in the previous section of trees with 
any sufficiently biased  sequence of reals as the factorial sequence 
show that the factorials might have any atypical behavior. In this section, we prove Theorems~\ref{equivalence-intro} and~\ref{thm:main2-intro}, which show however some asymptotic regularity behavior  when $n$ tends to infinity.

We first recall some basic definitions and results on random walks and flows on locally finite infinite 
trees. 

Let $(T, \ell)$ be a pair consisting of a locally finite rooted tree and a length function $\ell: E \rightarrow \mathbb R_+$. We assume as before that 
the edges of $T$ are oriented away from the root; $E(T)$ denotes the oriented edges of $T$ with this orientation.  A \emph{flow} $\sigma$ on $T$ consists of an application 
$\sigma: E(T) \rightarrow \mathbb R_{\geq 0}$ such that for any vertex $v\neq \r$ of $T$, we have 
$$\sigma(\cev{v}v) = \sum_{vu \in E(T)} \sigma(vu).$$

The \emph{total amount} of a flow $\sigma$ is by definition the quantity $\sum_{\r v\in E(T)} \sigma(\r v)$, and if this sum is equal to one, then $\sigma$ is called a \emph{unit flow}.
We denote by $\mathscr{F}_u(T)$ the set of all unit flows on $T$. Note that $\mathscr{F}_u(T)$ is non-empty if and only if $T$ is infinite.

Denote, as before, by $\partial T$ the boundary of $T$, which consists of infinite (oriented) paths $s$ which start from the 
root $\r$. The boundary  comes with a natural topology induced by the tree structure. Recall that a basis of non-empty 
open sets in this topology are the sets $B_v$ in bijection with the vertices of $v$, where for a vertex $v\in V(T)$, the set $B_v$ contains all the elements $s \in \partial T$ which contain $v$, i.e., 
\[B_v :=\Bigl\{\,s \in \partial T \,|\, v\in s\,\Bigr\}.\]
Any unit flow $\theta \in \mathscr F_u(T)$ defines naturally a measure of total mass one on $\partial T$ by 
\[\forall v\in T,\qquad \theta(B_v):=\theta(\cev v\, v).\]
This association of measures to unit follows induces a bijection from $\mathscr F_u(T)$ to the set of Borel measures of total mass one on $\partial T$.

\medskip

 Consider the $L^2$-space of real-valued functions on the edges of the tree
 $$L_\ell^2(E) :=\Bigl\{\,\theta: E \rightarrow \mathbb R \,\,\bigl|\,\, \sum_{e\in E}\,  \ell(e) \theta(e)^2<\infty\,\Bigr\},$$
 with the scalar product $\langle\cdot\,,\cdot\rangle$ given by 
 \[\forall\,\,\theta_1, \theta_2 \in L^2(E)\qquad \quad \langle \theta_1, \theta_2\rangle: = \sum_{e}\ell(e)\theta_1(e)\theta_2(e).\]
We may refer to the norm squared of an element $\theta \in L_\ell^2(E)$, defined by 
$||\theta||^2:=\langle \theta ,\theta\rangle,$ as the \emph{energy} of $\theta.$  The space of unit flows of bounded energy on $T$ is defined by
\[\mathscr F_{u}^b(T,\ell) :=  \mathscr F_u(T) \cap L_\ell^2(E).\]

The \emph{conductance} $c_{uv}$ of an edge $uv \in E(T)$ is defined as the inverse of the length $\ell(uv)$, i.e., $c_{uv} : = \frac 1{\ell(uv)}$. For $uv\in E(T)$, we define the conductance of the edge with reverse orientation $vu$ by symmetry, $c_{vu} =c_{uv}$.

 Consider the random walk  $RW_{(T,\ell)}$ on the tree $T$ which starts from the root, and which has, for any vertex $v\in V(T)$, a  probability of transition $p_{vu}$ from $v$ to any of its neighbors $u \sim v$ in the tree given by 
 \[p_{vu} = \frac 1{c_{vu}}\,\sum_{\substack{w\in V(T)\\ w\sim v}} c_{vw}.\] 
 In particular, for the standard length function $\ell \equiv 1$, there is an equal chance of moving from a vertex $v$ to any of its neighbors, and $RW_{(T, 1)} = RW(T)$ is the simple random walk on  $T$.
  
  \medskip
  
 Recall that a random walk on a tree is called \emph{transient} if, almost surely, the walk returns to the root only a finite number of times.   Otherwise, it is called \emph{recurrent}. We call a pair $(T, \ell)$ \emph{transient} (resp. \emph{recurrent}) if the random walk $RW_{(T,\ell)}$  is transient (resp. recurrent). 
 The following classical theorem gives a necessary and sufficient condition for the transience of
the  random walk $RW_{(T,\ell)}$, see e.g.~\cite{DS, Lyons83, NW59, Royden}.
\begin{thm}
 The random walk $RW_{(T, \ell)}$ on $T$ is transient if and only if $\mathscr F_{u}^b(T, \ell) \neq \emptyset.$
\end{thm}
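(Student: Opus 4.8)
The plan is to pass through the classical dictionary between the random walk and the electrical network in which the edge $uv$ carries resistance $\ell(uv)$, hence conductance $c_{uv} = 1/\ell(uv)$, so that $RW_{(T,\ell)}$ is exactly the associated network walk. Both transience and the existence of a finite-energy unit flow will be reduced to the finiteness of the effective resistance from the root to infinity. First I would set up a finite exhaustion: for each $n$ let $T_{\leq n}$ be the ball of radius $n$ about $\r$ and form the finite network $G_n$ by identifying every vertex at distance exactly $n$ into a single sink $z_n$. Write $R_n$ for the effective resistance from $\r$ to $z_n$ in $G_n$. By Rayleigh monotonicity (passing from $G_n$ to $G_{n+1}$ only removes a shorting) the sequence $R_n$ is nondecreasing, so $R_{\mathrm{eff}} := \lim_n R_n \in (0,\infty]$ is well defined, and the probabilistic content of the cited references gives $\mathbb P_{\r}[\text{no return to }\r] = 1/(c_\r R_{\mathrm{eff}})$ with $c_\r = \sum_{w\sim\r} c_{\r w}$; thus $RW_{(T,\ell)}$ is transient if and only if $R_{\mathrm{eff}} < \infty$.

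For the implication that a finite-energy flow forces transience, suppose $\theta \in \mathscr F_u^b(T,\ell)$. Cutting $\theta$ at generation $n$, that is, routing all its mass into $z_n$, produces a unit flow on the finite network $G_n$ whose energy is at most $\|\theta\|^2$. By Thomson's principle $R_n$ is the minimal energy among unit flows on $G_n$, so $R_n \leq \|\theta\|^2$ for every $n$, whence $R_{\mathrm{eff}} \leq \|\theta\|^2 < \infty$ and the walk is transient.

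For the converse, let $\theta_n$ be the unit current flow from $\r$ to $z_n$ on $G_n$, so that Thomson's principle gives $\|\theta_n\|^2 = R_n \leq R_{\mathrm{eff}} < \infty$. The $\theta_n$ then form a norm-bounded family in the Hilbert space $L_\ell^2(E)$; extracting a weakly convergent subsequence yields a limit $\theta$ with $\|\theta\|^2 \leq R_{\mathrm{eff}} < \infty$. Flow conservation at any fixed vertex is a closed linear condition that holds for all sufficiently large $n$, once the shorting at $z_n$ has receded beyond that vertex, so it survives in the limit, and the total mass stays equal to one; hence $\theta \in \mathscr F_u^b(T,\ell)$, which is therefore nonempty.

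The main obstacle is precisely this last passage to the limit: one must rule out a defective weak limit that leaks mass or fails conservation. Here the tree structure is decisive, since on a tree the current $\theta_n(e)$ through a fixed edge $e$ is determined by the finite subnetwork between $\r$ and $e$ together with the effective resistance of the subtree beyond $e$, and this quantity stabilizes as $n\to\infty$. This lets one replace the abstract weak-compactness argument by a direct edge-by-edge convergence, after which conservation and unit total mass for the limiting flow are immediate, completing the equivalence.
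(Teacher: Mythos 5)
The paper never proves this statement: it is quoted as a classical theorem with pointers to Doyle--Snell, T.~Lyons, Nash--Williams and Royden, so there is no internal proof to compare against. Your argument is precisely the standard proof from those sources --- exhaust by finite networks $G_n$ with the level-$n$ vertices shorted to a sink $z_n$, use Rayleigh monotonicity to define $R_{\mathrm{eff}}=\lim_n R_n$, translate transience into $R_{\mathrm{eff}}<\infty$ via the escape-probability formula, get one implication from Thomson's principle applied to the truncation of a finite-energy unit flow, and the other from a bounded-energy limit of the finite unit current flows. Structurally this is the right argument, and it is the one the cited references carry out.

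One step is missing relative to \emph{this paper's} definitions. Here a flow is by definition a function $\sigma\colon E(T)\to\mathbb R_{\geq 0}$ on the edges oriented away from the root, satisfying conservation; $\mathscr F_u^b(T,\ell)$ consists of such non-negative flows. Your weak (or edge-wise) limit $\theta$ of the current flows $\theta_n$ is shown to satisfy conservation and unit mass, but you never check $\theta\geq 0$, and a weak limit of signed objects need not a priori be non-negative. The fix is short on a tree: the subtree below a vertex $u$ meets the rest of $G_n$ only at $u$ and at the sink $z_n$, so the voltage $v_n$ (normalized to vanish at $z_n$) restricted to that subtree solves a Dirichlet problem with boundary data $v_n(u)$ at $u$ and $0$ at $z_n$; hence $v_n(w)=v_n(u)\,\mathbb P_w[\tau_u<\tau_{z_n}]\leq v_n(u)$ for every child $w$ of $u$, so $\theta_n(uw)=c_{uw}\bigl(v_n(u)-v_n(w)\bigr)\geq 0$ on all edges oriented away from the root, and non-negativity survives the limit. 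A second, smaller caveat: your edge-by-edge stabilization claim (currents determined by downstream effective resistances) degenerates to $0/0$ at a vertex $u$ all of whose subtrees have effective resistance tending to infinity; there one should note that the current entering $u$ in $G_n$ is at most $R_n/R_u(n)\to 0$, so the limiting current is zero. With these two points supplied, the proof is complete.
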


Suppose from now on that $(T, \ell)$ is a transient pair, so that we have $\mathscr F_{u}^b(T,\ell) \neq \emptyset$. 
This implies  the existence and uniqueness of a flow of minimum energy $\eta \in \mathscr F_{u}^b(T,\ell)$, c.f.~\cite{Lyons90, DS}. 
 The flow $\eta$ is called the \emph{unit current flow} on $T$. Although not necessary for what follows, we recall the following probabilistic interpretation of $\eta$: for any edge $uv\in E(T)$, $\eta(uv)$ 
is the expected \emph{net} number of crossing of the edge $uv$ for the random walk $RW_{(T, \ell)}$, where net means that a crossing of an edge $uv \in E(T)$  is counted with positive sign while the walk crosses the edge from $u$ to $v$, and with negative sign if the edge is crossed from
$v$ to $u$.

\medskip

The Borel measure associated to the unit current flow $\eta$ is called the \emph{harmonic measure} on $(T,\ell)$ and is denoted 
by $\mu_{\mathrm{har}}$. We recall the following useful property for the harmonic measure whose proof 
can be found e.g. in~\cite{Lyons90, LP}

\begin{prop}\label{prop:height} Let $(T,\ell)$ be a pair consisting of a locally finite tree $T$ with a length function $\ell$. Suppose that the random walk $RW_{(T, \ell)}$ is transient, and let $\eta$ and $\mu_{\mathrm{har}}$
be the corresponding unit current flow and harmonic measure, respectively. Let $\theta$ be any flow of bounded energy in $\mathscr F_u^b(T, \ell)$, and let $\mu_\theta$ be the Borel
measure associated  to $\theta$. Then we have 
\[\lim_{\substack{v\in s \\ v \to \infty}} \ell_\eta([\r,v]) = ||\eta||^2 \qquad \textrm{$\mu_\theta$-a.s. on $\partial T$},\]
where $\ell_\eta([\r,v]) = \sum_{e\in [\r,v]} \eta(e) \ell(e)$.
\end{prop}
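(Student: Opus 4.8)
The plan is to write $F(v):=\ell_\eta([\r,v])=\sum_{e\in[\r,v]}\eta(e)\ell(e)$ and to prove the assertion by squeezing the limit $F_\infty(s):=\lim_{v\in s}F(v)$ between a \emph{pointwise} upper bound and an \emph{integral} lower bound, both equal to $\|\eta\|^2$. Since the unit current flow takes nonnegative values (it is directed away from the root), $F$ is nondecreasing along every ray, so $F_\infty(s)$ exists in $[0,\infty]$ for each $s\in\partial T$; the content of the proposition is that this limit equals the constant $\|\eta\|^2$ for $\mu_\theta$-almost every $s$.

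First I would record the integral identity $\int_{\partial T}F_\infty\,d\mu_\theta=\|\eta\|^2$. This rests on the energy-minimizing property of $\eta$: as $\eta$ minimizes $\|\cdot\|^2$ over the affine set $\mathscr F_u^b(T,\ell)$ and $\theta-\eta$ is an admissible direction (a finite-energy flow of total amount zero), the first-order optimality condition gives $\langle\eta,\theta-\eta\rangle=0$, that is $\langle\eta,\theta\rangle=\|\eta\|^2$. Writing the increment of $F$ across the edge $\cev v\,v$ as $F(v)-F(\cev v)=\eta(\cev v\,v)\ell(\cev v\,v)\ge 0$ and telescoping gives $F_\infty(s)=\sum_{v\in s}\bigl(F(v)-F(\cev v)\bigr)$; then Tonelli (all terms nonnegative) together with $\mu_\theta(B_v)=\theta(\cev v\,v)$ yields
\[\int_{\partial T}F_\infty\,d\mu_\theta=\sum_{v\neq\r}\bigl(F(v)-F(\cev v)\bigr)\,\theta(\cev v\,v)=\sum_{e\in E(T)}\eta(e)\ell(e)\theta(e)=\langle\eta,\theta\rangle=\|\eta\|^2.\]

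The main step, and the one I expect to be the real obstacle, is the pointwise upper bound $F_\infty(s)\le\|\eta\|^2$ for every ray $s$, equivalently $V(v):=\|\eta\|^2-F(v)\ge 0$ for every vertex $v$. Here $V$ is precisely the voltage function of $\eta$: the conservation identity $\eta(\cev v\,v)=\sum_{\cev u=v}\eta(vu)$ at each $v\neq\r$ translates, via Ohm's law $\eta(e)\ell(e)=$ (voltage drop across $e$), into the harmonicity of $V$ on $T\setminus\{\r\}$. To control $V$ I would pass to the finite wired exhaustions $T_{(n)}$ obtained by shorting all generation-$n$ vertices to a single sink $z_n$, with finite unit current $\eta_n$ and voltage $V_n$ normalized by $V_n(z_n)=0$. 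In each finite network the maximum principle gives $0=V_n(z_n)\le V_n(v)\le V_n(\r)=\|\eta_n\|^2_{T_{(n)}}$, while the standard convergence of wired currents on a transient network yields $\eta_n(e)\to\eta(e)$, hence $F_n(v):=\sum_{e\in[\r,v]}\eta_n(e)\ell(e)\to F(v)$ and $\|\eta_n\|^2\to\|\eta\|^2$ (here I would cite \cite{DS, LP, Lyons90}). Passing to the limit gives $V(v)=\|\eta\|^2-F(v)\ge 0$ for all $v$, so $F_\infty(s)\le\|\eta\|^2$ for all $s$; in particular $F_\infty$ is finite and the limit genuinely exists.

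Finally I would combine the two bounds: the function $\|\eta\|^2-F_\infty$ is nonnegative everywhere by the pointwise bound, yet integrates to zero against $\mu_\theta$ by the integral identity, so it vanishes $\mu_\theta$-almost everywhere. Thus $F_\infty(s)=\|\eta\|^2$ for $\mu_\theta$-a.e.\ $s\in\partial T$, which is the claim. The delicate point throughout is the passage to the infinite network in the pointwise bound, namely the validity of the maximum principle in the limit and the convergence of the finite currents and energies; by contrast the orthogonality identity and the Tonelli interchange are routine once the sign condition $\eta\ge 0$ is exploited.
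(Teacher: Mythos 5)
Your proof is correct, but there is no ``paper proof'' to match it against: the paper states Proposition~\ref{prop:height} as a known result and simply points to \cite{Lyons90, LP} for its proof. What you have done is reconstruct, in a self-contained way, the electrical-network argument behind that citation. Your squeeze --- the pointwise bound $F_\infty(s)\le\|\eta\|^2$ via wired exhaustions and the finite maximum principle, the identity $\int_{\partial T}F_\infty\,d\mu_\theta=\langle\eta,\theta\rangle$ via telescoping and Tonelli, and a Thomson-type orthogonality/optimality relation --- is exactly the standard route, and it is consistent with the paper's own toolkit: the proof of Theorem~\ref{thm:equiv} (the claims following Claim~\ref{claim4.10}) runs the very same telescoping and summation-by-parts computations for the limit flow $\phi$, including the bound $F(v)\le H(T,\ell)$ and the identity $\|\phi\|^2=\int_{\partial T}F\,d\mu_\phi$.

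One imprecision is worth repairing. In the paper's conventions a flow is a \emph{nonnegative} function on the oriented edges, so $\mathscr F_u^b(T,\ell)$ is convex but not affine, and the two-sided first-order condition $\langle\eta,\theta-\eta\rangle=0$ is not justified as you state it. Two fixes are available. (a) One-sided perturbation: $(1-t)\eta+t\theta\in\mathscr F_u^b(T,\ell)$ for $t\in[0,1]$, so minimality of $\eta$ only yields $\langle\eta,\theta-\eta\rangle\ge 0$, i.e.\ $\langle\eta,\theta\rangle\ge\|\eta\|^2$; but this inequality is all your argument needs, since the pointwise bound already forces $\int_{\partial T}F_\infty\,d\mu_\theta\le\|\eta\|^2$, and the two inequalities together make $\|\eta\|^2-F_\infty$ a nonnegative function with zero integral. (b) Enlarge to signed unit flows, over which the admissible set is genuinely affine; on a tree the signed energy minimizer coincides with the paper's $\eta$ because the finite wired currents $\eta_n$ are nonnegative (they are expected net crossing numbers of the walk absorbed at the sink $z_n$), hence so is their limit. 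The same remark also closes the one identification you use implicitly, namely that the paper's $\eta$ (defined as the minimizer over \emph{nonnegative} unit flows) is the limit of the wired currents: the standard convergence theorem you cite produces the minimizer over signed flows, and nonnegativity of the $\eta_n$ identifies the two minimizers. With either repair in place, the argument is complete.
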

In other words, $\mu_{\theta}$-almost surely, the infinite rays in $T$ have the same $\ell_\eta$-length, equal to the energy of the unit current flow $\eta$.

\medskip

We are now ready to state the main theorem of this section on the growth of the factorial sequence associated to the transient pairs $(T, \ell)$. Let $\Gamma$ be the metric tree associated to $(T, \ell)$. We call $\Gamma$, and also $(T, \ell)$, \emph{weakly complete} if it verifies the property that for any vertex $v$ of $T$, any infinite strict path $P$ of $T_v$ has length infinite $\ell$-length, i.e., $\ell(P)=\infty$.  Note that if the values of the length function are $\epsilon$-away from zero for some $\epsilon>0$, e.g., for integer valued length functions such as the standard length function $\ell \equiv 1$, the pair $(T, \ell)$ is automatically weakly complete.

\medskip

Let $(T,\ell)$ be a transient pair. Consider a weighting sequence $\omega_n$ 
for the edges as in Section~\ref{sec:def}, and denote by $T_n =T_{\omega_n}$ the corresponding sequence of factorial trees. 
For each integer $n$, denote by $\widetilde \omega_n$ the \emph{normalized weight function} on $T_n$ defined by 
\[\forall e\in T_n\,\qquad \widetilde \omega_n (e) := \frac 1n \omega(e),\]
that we extend by zero to all the edges in $E(T)\setminus E(T_n)$. Note that for all internal vertices $u$ of the tree $T_n$, we have the flow property at $u$ 
for $\widetilde \omega$ 
$$\sum_{uv\in E(T_n)}\widetilde \omega_n(uv) = \widetilde \omega(\cev uu).$$
In addition, at root $\r$ we have
\[\sum_{\r v\in E(T_n)}\widetilde \omega_n(\r v) =1.\]
We can rephrase this by saying that $\widetilde \omega_n$ is a \emph{partial} unit follow on $T$. 

\begin{defn}\rm
 A measure $\mu_n$ on the extended boundary $\widetilde \partial T$ is called consistent with $\widetilde \omega_n$ 
if for all internal vertex $v$ of $T_n$ we have 
$\mu_n(B_v) =\widetilde w_n(\cev vv)$. 
\end{defn}

In particular, for a choice of elements $\rho_0, \rho_1, \dots$ in the extended boundary $\widetilde \partial T$ in the definition of the factorial sequence in Section~\ref{sec:locfin-intro}, the discrete averaging measures 
$\mu_n = \frac 1n \sum_{j=0}^{n-1} \delta_{\rho_j}$ are consistent with $\widetilde \omega_n$. 

Obviously, $\widetilde \omega_n$ depends on the choices we made at each step in constructing the 
factorial sequence. However, the following theorem shows, when the pair $(T, \ell)$ is weakly complete and transient, 
asymptotically, the behavior of $\widetilde \omega_n$ is independent of the 
choices. More precisely,

\begin{thm}\label{thm:measure} Let $(T, \ell)$ be a transient pair, and let $\Gamma$ be the corresponding metric tree. Denote by $\eta$ and $\mu_{\mathrm{har}}$ the unit current flow on $T$ and the harmonic measure on $\partial T$, respectively. Assume that $\Gamma$ is weakly complete.  Then
\begin{itemize}
 \item[(1)] the sequence $\widetilde \omega_n$ converges point-wise to $\eta$, i.e., for any edge $e$, 
 we have  
 \[\lim_{n \to \infty} \widetilde \omega_n(e) =\eta(e).\]
 \item[(2)] for any sequence of measures $\mu_n$ on $\partial T$ with $\mu_n$ 
 consistent with $\widetilde \omega_n$, the sequence $\mu_n$ converges weakly to the harmonic measure 
 $\mu_{\mathrm{har}}$.
 \item[(3)] the (logarithmic) factorials of $(T, \ell)$ satisfy the following asymptotic
 \[H(\Gamma) = \lim_{n \rightarrow \infty} \frac 1n \, n!_\Gamma = ||\eta||^2.\]
\end{itemize} 
\end{thm}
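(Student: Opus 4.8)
The plan is to show that every pointwise subsequential limit of the partial unit flows $\widetilde\omega_n$ equals the current flow $\eta$, and then to read off the value of $H(\Gamma)$ and the equidistribution statement from this. First I would regard each $\widetilde\omega_n$ as a point of the compact space $[0,1]^{E(T)}$ (note $\widetilde\omega_n(e)=\omega_n(e)/n\le 1$, since at most $n$ of the elements $\rho_0,\dots,\rho_{n-1}$ pass through a fixed edge), and pass to a subsequence converging pointwise to some $\theta:E(T)\to[0,1]$. By the exhaustiveness of the weighting process (Proposition~\ref{prop:cover}) every vertex is eventually interior to $T_{\omega_n}$, so the conservation identities satisfied by $\widetilde\omega_n$ pass to the limit at every vertex; together with $\sum_{\r u}\widetilde\omega_n(\r u)=1$ (a finite sum, as $\r$ has finite valence) this shows $\theta$ is a unit flow. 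The whole problem thus reduces to proving $\theta=\eta$.

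The next step is the energy bound $\|\theta\|^2\le H(\Gamma)$. Here I would use the identity
\[\sum_{j=0}^{n-1}a_j \;=\; \sum_{i<j}\langle\rho_i,\rho_j\rangle \;=\; \sum_{e\in E(T)}\ell_e\binom{\omega_n(e)}{2},\]
which follows from $a_j=\sum_{i<j}\langle\rho_j,\rho_i\rangle$ and $\langle\rho,\tau\rangle=\sum_{e\in\rho\cap\tau}\ell_e$. Crucially, edges of weight one contribute $\binom{1}{2}=0$, so this quantity is finite even though $\widetilde\omega_n$ itself may have infinite energy (e.g.\ along a pendant infinite strict path weighted once). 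Dividing by $n^2/2$ and using $a_j/j\to H(\Gamma)$ with $H(\Gamma)<\infty$ (transience, Theorem~\ref{equivalence-intro}), a Ces\`aro argument gives $\tfrac{2}{n^2}\sum_{j<n}a_j\to H(\Gamma)$. Since $\omega_n(e)(\omega_n(e)-1)/n^2\to\theta(e)^2$ edgewise, Fatou's lemma yields $\|\theta\|^2=\sum_e\ell_e\theta(e)^2\le H(\Gamma)$. In particular $\theta\in\mathscr F_u^b(T,\ell)$, so by energy-minimality of the current flow $\|\eta\|^2\le\|\theta\|^2\le H(\Gamma)$.

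The crux is the reverse bound $H(\Gamma)\le\|\eta\|^2$, which I would get from an optimization characterization of the factorials. Writing $Q(A)=\sum_{\{\rho,\tau\}\subset A}\langle\rho,\tau\rangle=\sum_e\ell_e\binom{c_e(A)}{2}$ for a configuration $A\subset\widetilde\partial\Gamma$ with edge-multiplicities $c_e(A)$, the marginal cost of routing one additional ray through a vertex $v$ is exactly $\ell_{\omega}([\r,v])$; hence the factorial greedy is precisely the incremental greedy for minimizing the separable convex functional $Q$ over the laminar family of integer flows of total mass $n$. A standard exchange argument for separable convex minimization over such a family shows this greedy is globally optimal, so $\sum_{j<n}a_j=\min_{|A|=n}Q(A)$. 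I would then bound the minimum from above probabilistically: sampling $n$ i.i.d.\ rays from the harmonic measure $\mu_{\mathrm{har}}$ gives $\mathbb E[c_e]=n\eta(e)$ and $\mathbb E\binom{c_e}{2}=\binom{n}{2}\eta(e)^2$, so $\mathbb E[Q]=\binom{n}{2}\|\eta\|^2$. Weak completeness forces $\mu_{\mathrm{har}}$ to be non-atomic (an atom would make $\eta$ bounded below along an eventually branch-free ray, i.e.\ an infinite strict path, which by weak completeness has infinite length, contradicting finite energy), so the sampled rays are a.s.\ distinct and $Q$ is a.s.\ finite. Hence some configuration achieves $Q\le\binom{n}{2}\|\eta\|^2$, giving $\sum_{j<n}a_j\le\binom{n}{2}\|\eta\|^2$ and, after dividing by $n^2/2$, $H(\Gamma)\le\|\eta\|^2$.

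Combining the two bounds forces $\|\theta\|^2=\|\eta\|^2=H(\Gamma)$, and uniqueness of the minimal-energy unit flow gives $\theta=\eta$. Since every subsequence of $\{\widetilde\omega_n\}$ then has a further subsequence converging to $\eta$, the full sequence converges pointwise to $\eta$, which is $(1)$; and $\tfrac1n\,n!_\Gamma=a_n/n\to H(\Gamma)=\|\eta\|^2$ is $(3)$. Finally $(2)$ follows from $(1)$: any $\mu_n$ consistent with $\widetilde\omega_n$ satisfies $\mu_n(B_v)=\widetilde\omega_n(\cev v v)\to\eta(\cev v v)=\mu_{\mathrm{har}}(B_v)$ for every vertex $v$, and as the clopen cylinders $\{B_v\}$ generate the topology of the compact boundary $\partial T$ they form a convergence-determining class, whence $\mu_n\to\mu_{\mathrm{har}}$ weakly. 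I expect the main obstacle to be the optimality lemma $\sum_{j<n}a_j=\min_A Q(A)$, together with the careful use of weak completeness to rule out atoms of $\mu_{\mathrm{har}}$ so that the probabilistic upper bound produces an admissible (distinct-ray, finite-energy) configuration.
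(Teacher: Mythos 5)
Your first half is correct and genuinely different from the paper: viewing $\widetilde\omega_n$ as points of $[0,1]^{E(T)}$, and especially the identity $\sum_{j<n}a_j=\sum_{e}\ell_e\binom{\omega_n(e)}{2}$ combined with Stolz--Ces\`aro and Fatou to get $\|\theta\|^2\le H(\Gamma)$ for every subsequential limit $\theta$, is a clean substitute for the paper's route (the paper instead proves pointwise convergence with the explicit product formula of Theorem~\ref{thm:pwconv} by induction on generations, and then bounds the energy via a telescoping computation and the identity $\|\phi\|^2=\int_{\partial T}F\,d\mu_\phi$). The genuine gap is in your proof of the crux inequality $H(\Gamma)\le\|\eta\|^2$: weak completeness does \emph{not} force $\mu_{\mathrm{har}}$ to be non-atomic. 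Your parenthetical argument assumes a ray carrying an atom is eventually branch-free; but an atom only forces $\eta\geq c>0$ along the ray, hence (by finiteness of the energy) that the ray has \emph{finite total length} --- it does not prevent the ray from branching at every vertex. Concretely, take a ray $s=v_0v_1v_2\dots$ with $\ell(v_iv_{i+1})=2^{-i}$ and attach to each $v_i$ an infinite binary tree with unit edge lengths. Every vertex is branching, so there are no infinite strict paths and the pair is (vacuously) weakly complete; it is transient since $s$ alone has finite resistance; and the walk stays on $s$ forever with positive probability, so $\mu_{\mathrm{har}}(\{s\})>0$. For such a tree your i.i.d.\ sample contains repeated rays with probability bounded away from zero, and a multiset with repetitions is not an admissible configuration (a factorial-defining sequence consists of \emph{distinct} elements of $\widetilde\partial\Gamma$), so $\mathbb E[Q]=\binom{n}{2}\|\eta\|^2$ no longer yields an admissible configuration of that cost. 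The theorem is still true for this tree, so it is your method, not the statement, that fails here.

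A second, smaller issue is that the optimality lemma $\sum_{j<n}a_j=\min_{|A|=n}Q(A)$ is asserted rather than proved; it is a tree analogue of Bhargava's theorem that $P$-orderings minimize pairwise valuation products, and on an infinite tree the feasible multiplicity vectors are integer flows subject to an asymptotic end-by-end capacity-one constraint (exactly where atoms intervene), not a finite polymatroid, so ``standard exchange argument'' hides real work. The paper sidesteps both difficulties: it proves $H(T,\ell)\le\|\eta\|^2$ (Corollary~\ref{cor:ub}) by the flow-tracing argument of Proposition~\ref{ub-growth}, which for each $n$ follows edges where $0<\widetilde\omega_n(e)\le\eta(e)$ --- using conservation of both flows and weak completeness to traverse strict paths --- until it exits $T_{\omega_n}$ at an unsaturated vertex $v$, whence $\frac1n\, n!_\Gamma\le\sum_{e\in[\r,v]}\eta(e)\ell(e)\le\|\eta\|^2$ by Proposition~\ref{prop:height}. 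To complete your approach you would need either to substitute such an argument for the sampling step, or to repair the sampling by replacing repeated copies of an atom $s$ with distinct rays or leaf-paths deviating from $s$ arbitrarily late, an estimate that uses precisely the finite length of atoms and would have to be carried out in detail.
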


\begin{remark}\label{rem:ex}\rm The condition of being weakly complete is necessary as the following example shows. Consider a pair $(T, \ell)$ with $T$ a rooted tree with $\r$. Assume $\r$ has two children $u,v$, $T_u$ is an infinite path of finite $\ell$-length, and $(T_u,\ell)$ is a recurrent pair. Then the unit current flow on $(T,\ell)$ is the unit flow on the strict path $P_u$ which contains $u$. However, the limit of $\widetilde \omega_n(\r u)$ is obviously zero, as $\omega_n(\r u) =1$ for all large $n$. This is a typical situation where, in absence of the weakly completeness assumption, the arguments of the next section fail.
\end{remark}

The rest of this section is devoted to the proof of Theorem~\ref{thm:measure}. 

\subsection{Upper bound on the growth of factorials}
 
 In this section, we assume $T$ is a locally finite tree and $\ell$ is a length function on $T$ so that the pair $(T, \ell)$ is  transient, so $\mathscr F^b_u(T) \neq \emptyset$, and $(T, \ell)$ is weakly complete. Both the condition are necessary for what follows. 

Let $\omega_n$ be a sequence of weightings resulting in the construction of the factorial sequence of $(T, \ell)$.
Let $T_n = T_{\omega_n}$, and denote by $U_n$ the set of all the unsaturated vertices of $T_n$, as before.  Let 
$\widetilde \omega_n =\frac 1n \omega_n$, that we extend by zero to all the edges $E(T) \setminus E(T_n)$. 

Let $\theta \in \mathscr F^b_{u}(T, \ell)$ be a unite flow of bounded energy on $T$. We have the following proposition.

\begin{prop}\label{ub-growth}
 For each non-negative integer $n$, there exists a vertex $v \in U_n$ such that for all the edges $e$ on the oriented path $[\r,v]$ from $\r$ to $v$, we have 
 \begin{equation}\label{prop:p1}
 0<\widetilde \omega_n(e)\leq \theta(e).
 \end{equation}
  In particular, the path $P$ is part of an infinite ray of $T$, and we have 
 \begin{equation}\label{prop:p2}
 H(T, \ell)\leq \sum_{e\in P} \theta(e) \ell(e).
 \end{equation}
\end{prop}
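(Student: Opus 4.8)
The plan is to read $\widetilde\omega_n$ as an incomplete unit flow and to compare it, edge by edge, against the genuine bounded-energy unit flow $\theta$ by means of a cut argument. Call an edge $e$ of $T_n$ \emph{good} if $\widetilde\omega_n(e)\le\theta(e)$ and \emph{bad} otherwise, and let $W\subseteq V(T_n)$ be the set of vertices reachable from $\r$ along a path of good edges. Since $T_n$ is a finite subtree containing $\r$ and its edges carry positive weight, $W$ is a finite subtree rooted at $\r$, and for any $v\in W$ every edge $e$ on $[\r,v]$ automatically satisfies $0<\widetilde\omega_n(e)\le\theta(e)$. Thus the assertion \eqref{prop:p1} reduces to showing that $W$ meets the set $U_n$ of unsaturated vertices; the path $P$ is then $[\r,v]$ for any such $v\in W\cap U_n$.

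To prove $W\cap U_n\neq\emptyset$ I would argue by contradiction, assuming every vertex of $W$ is saturated. First note that any flow vanishes on the edge entering a leaf of $T$: conservation at a leaf $w$ forces $\theta(\cev{w}w)=\sum_{wu}\theta(wu)=0$. Consequently $W$ can contain no saturated leaf of $T$, since the edge entering it would be an internal (hence good) edge of $W$ and would give $0<\widetilde\omega_n\le\theta=0$. Every vertex of $W$ is therefore an internal, saturated vertex of $T_n$, so all of its children lie in $T_n$ and $\widetilde\omega_n$ is conserved at it with no absorption. Writing $\partial W$ for the set of edges leaving $W$, these are exactly the bad edges issuing from $W$, and they all lie in $T_n$; moreover $\partial W\neq\emptyset$, because a deepest vertex of $W$ is internal in $T_n$ and none of its children can be reached through a good edge. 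Balancing the two flows across $\partial W$ now gives the contradiction: the unit mass leaving $\r$ is conserved throughout $W$ and is not absorbed, so $\sum_{e\in\partial W}\widetilde\omega_n(e)=1$, while for the honest flow $\theta$ the net outflow of the ancestor-closed set $W$ equals its supply at the root, $\sum_{e\in\partial W}\theta(e)=1$. Since every $e\in\partial W$ is bad, $1=\sum_{e\in\partial W}\widetilde\omega_n(e)>\sum_{e\in\partial W}\theta(e)=1$, which is absurd. Hence $W\cap U_n\neq\emptyset$ and \eqref{prop:p1} follows.

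It then remains to extract \eqref{prop:p2}. The path $P=[\r,v]$ extends to an infinite ray by following positive-$\theta$ edges downward from $v$: conservation $\theta(\cev{u}u)=\sum_{uw}\theta(uw)>0$ produces at each step a child carrying positive flow. For the growth bound, since $v\in U_n$ and $a_n=n!_\Gamma$ is by construction the minimal $\ell_{\omega_n}$-distance from $\r$ to a vertex of $U_n$, I would estimate
\[
\frac1n\, n!_\Gamma=\frac1n\,a_n\le\frac1n\,\ell_{\omega_n}([\r,v])=\sum_{e\in P}\widetilde\omega_n(e)\,\ell(e)\le\sum_{e\in P}\theta(e)\,\ell(e),
\]
using $\omega_n=n\widetilde\omega_n\le n\theta$ along $P$. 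Recalling from Corollary~\ref{cor:limit} that $H(\Gamma)=\lim_n\frac1n\,n!_\Gamma$, this yields the stated inequality; specializing to $\theta=\eta$ and passing to the limit along the ray, Proposition~\ref{prop:height} identifies the eventual value of $\sum_{e\in P}\eta(e)\ell(e)$ along a $\mu_{\mathrm{har}}$-typical ray with the energy $\|\eta\|^2$, giving the upper bound $H(\Gamma)\le\|\eta\|^2$ that this proposition is designed to feed into.

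The main obstacle I expect is the bookkeeping in the cut argument: verifying that $W$ cannot contain a saturated leaf of $T$ and that $\partial W$ consists purely of bad edges lying in $T_n$, so that the two flow balances line up into the sharp contradiction $1>1$. This is where transience is indispensable, as it is exactly the hypothesis that guarantees a comparison flow $\theta\in\mathscr F^b_u(T,\ell)$ exists. A secondary point requiring care is the passage from the per-$n$ inequality to a statement about $H(\Gamma)$, which uses the superadditivity of Proposition~\ref{prop:supad} together with Proposition~\ref{prop:height}; it is precisely in identifying the $\eta$-length of almost every ray that the weak-completeness hypothesis enters.
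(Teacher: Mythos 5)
Your cut-set argument is a genuinely different route from the paper's proof (which builds the path $[\r,v]$ greedily, one strict path at a time, applying flow conservation at a single vertex at each step), and the flow-balance contradiction across $\partial W$ is correct \emph{provided} $W$ is finite. That is exactly where your proof has a genuine gap: you justify finiteness of $W$ by asserting that ``$T_n$ is a finite subtree,'' and this is false in general. The weighting process assigns weight $1$ to an \emph{entire} strict path at each step, and strict paths may be infinite rays of valence-two vertices; for instance, if $\r$ has two children $u,v$ with $T_u$ an infinite path and $T_v$ an infinite binary tree, the very first step may weight all of $T_u$, so $T_1$ is already infinite (this can happen even for transient, weakly complete pairs). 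When $W$ is infinite, the deepest-vertex argument and both balance identities $\sum_{e\in\partial W}\widetilde\omega_n(e)=1=\sum_{e\in\partial W}\theta(e)$ break down, because flow mass can escape to infinity inside $W$. This is not a technicality but the actual failure mode of the statement: by Remark~\ref{rem:ex}, the proposition is \emph{false} once weak completeness is dropped, yet your core argument never invokes weak completeness (you locate it, incorrectly, only in the later appeal to Proposition~\ref{prop:height}, which holds for any transient pair). In the paper's own counterexample, $W$ contains the whole infinite path of finite length carrying the unit current flow, every vertex of $W$ is saturated, $\partial W$ is a single bad edge, and no contradiction arises --- precisely because $W$ is infinite.

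The gap is reparable, and the repair shows where weak completeness must enter your scheme: $T_n$ is a union of finitely many strict paths (at most two are newly weighted per step); along any strict path, $\theta$ is constant, by conservation at its valence-two interior vertices; if such a path is infinite, weak completeness gives it infinite $\ell$-length, so finiteness of the energy $\sum_{e}\ell(e)\theta(e)^2$ forces $\theta\equiv 0$ on it. Since every good edge satisfies $\theta(e)\geq\widetilde\omega_n(e)>0$, good edges lie only on the finitely many \emph{finite} strict paths of $T_n$; hence $W$ is finite and your cut argument then goes through. (For fairness: the paper's termination step also says ``$T_n$ is finite'' and needs the same repair, but its induction uses weak completeness essentially at every stage, to guarantee each traversed strict path is finite, whereas in your write-up the hypothesis does no work at all.) A final small point: your derivation of \eqref{prop:p2} establishes, as the paper's does, the per-$n$ bound $\frac 1n n!_\Gamma\leq\sum_{e\in P}\theta(e)\ell(e)$ with $P$ depending on $n$; the statement with $H(T,\ell)$ on the left is then obtained exactly as in Corollary~\ref{cor:ub}, by taking $\theta=\eta$, bounding all these sums uniformly by $\|\eta\|^2$ via Proposition~\ref{prop:height}, and letting $n\to\infty$, so your treatment of that part matches the paper's.
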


As an application of this proposition, we get the following interesting corollary.

\begin{cor}\label{cor:ub} For any transient and weakly complete pair $(T, \ell)$, we have 
\[\lim_{n\to \infty} \frac 1n (n!_{(T, \ell)}) \leq \|\eta\|^2 < \infty,\]
where $\eta \in \mathscr F^b_u(T, \ell)$ is the unite current flow. \end{cor}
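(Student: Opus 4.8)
The plan is to feed the unit current flow into the upper bound of Proposition~\ref{ub-growth} and then to control the resulting quantity by elementary potential theory. Since $(T,\ell)$ is transient we have $\mathscr F^b_u(T,\ell)\neq\emptyset$, so the unit current flow $\eta$ exists and $\|\eta\|^2<\infty$. Taking $\theta=\eta$ in Proposition~\ref{ub-growth}, inequality~\eqref{prop:p2} produces, for every $n$, a vertex $v_n\in U_n$ whose path to the root satisfies $H(T,\ell)\leq \sum_{e\in[\r,v_n]}\eta(e)\ell(e)=\ell_\eta([\r,v_n])$. The whole statement therefore reduces to the single estimate $\ell_\eta([\r,v])\leq\|\eta\|^2$, valid for every vertex $v$, since this bound is independent of $n$ and of the choices made in the weighting process.

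To prove this estimate I would introduce the voltage (potential) function $U(v):=\|\eta\|^2-\ell_\eta([\r,v])$ associated with $\eta$. Rewriting the definition of $\ell_\eta$ through $c_{uw}=1/\ell(uw)$ gives $\eta(uw)=c_{uw}\bigl(U(u)-U(w)\bigr)$ for each oriented edge $uw$, so that Kirchhoff's node law for the flow $\eta$ at every vertex $v\neq\r$ becomes the discrete harmonicity relation $\sum_{x\sim v}c_{vx}\bigl(U(x)-U(v)\bigr)=0$. Hence $U$ is harmonic off the root, $U(\r)=\|\eta\|^2$, and $U$ is non-increasing along every ray because $\eta\geq 0$. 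It is a standard fact for transient networks that the voltage function of the unit current flow from $\r$ to $\infty$, normalized to vanish at infinity, equals $\|\eta\|^2$ times the escape probability $\mathbb P_v[\,RW_{(T,\ell)}\text{ started at }v\text{ never returns to }\r\,]$; see~\cite{Lyons90, LP, DS}. In particular $0\leq U(v)\leq\|\eta\|^2$, which is exactly the desired bound $\ell_\eta([\r,v])\leq\|\eta\|^2$.

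Combining the two steps gives $H(T,\ell)\leq\|\eta\|^2<\infty$, and since $T$ is infinite the limit $\frac1n\,n!_{(T,\ell)}\to H(T,\ell)$ exists by Corollary~\ref{cor:limit}, which yields the claimed inequality. I expect the only genuinely delicate point to be the non-negativity $U\geq 0$ of the voltage: the easy direction $\ell_\eta([\r,v])\geq 0$ is immediate from $\eta\geq 0$, whereas the upper bound $\ell_\eta([\r,v])\leq\|\eta\|^2$ asserts that no ray can accumulate more $\ell_\eta$-length than the total energy. One can relate this to Proposition~\ref{prop:height}, which shows that the $\ell_\eta$-length of a ray converges to $\|\eta\|^2$ for $\mu_{\mathrm{har}}$-almost every ray; but since atypical rays may accumulate strictly less length, only the one-sided bound coming from the maximum principle (equivalently, from the probabilistic formula above) is needed here. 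It is precisely this asymmetry between the upper and lower bounds that makes the potential-theoretic input essential, rather than a direct appeal to the almost-sure equality of Proposition~\ref{prop:height}.
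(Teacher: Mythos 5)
Your proof is correct, and it coincides with the paper's argument up to one point: both take $\theta=\eta$ in Proposition~\ref{ub-growth}, but the key estimate $\sum_{e\in[\r,v]}\eta(e)\ell(e)\le\|\eta\|^2$ is obtained by genuinely different means. The paper stays within its own toolkit: by~\eqref{prop:p1} the flow $\eta$ is strictly positive on every edge of the path $P=[\r,v]$ supplied by the proposition, so the cylinder $B_v$ has positive harmonic measure $\mu_{\mathrm{har}}(B_v)=\eta(\cev v\, v)>0$; Proposition~\ref{prop:height} then yields at least one ray through $v$ whose $\ell_\eta$-length equals $\|\eta\|^2$, and non-negativity of the summands bounds the partial sum along $P$ by the total. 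You instead prove the stronger uniform bound $\ell_\eta([\r,v])\le\|\eta\|^2$ for \emph{every} vertex $v$, via the voltage function $U(v)=\|\eta\|^2-\ell_\eta([\r,v])$, its harmonicity off the root, and the standard potential-theoretic description of the voltage of the unit current flow. Your route buys a cleaner and more general estimate, with no positivity hypothesis along the path and no appeal to Proposition~\ref{prop:height}; its cost is importing standard facts about voltages on transient networks from~\cite{LP,Lyons90,DS} that the paper does not otherwise invoke, whereas the paper's argument is self-contained given Proposition~\ref{prop:height}, which it has already stated with references.

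Two corrections. First, your closing claim that the potential-theoretic input is \emph{essential}, and that a direct appeal to the almost-sure equality of Proposition~\ref{prop:height} cannot work because atypical rays may accumulate less length, is mistaken: the positivity~\eqref{prop:p1} is exactly what excludes this issue, since a cylinder of positive harmonic measure must contain a $\mu_{\mathrm{har}}$-typical ray, and monotonicity of the partial sums finishes the argument---this is precisely the paper's proof. Second, the standard fact you quote is misstated: the voltage of the unit current flow, normalized to vanish at infinity, equals $\|\eta\|^2\,\mathbb{P}_v[\text{the walk started at $v$ ever visits $\r$}\,]$, not $\|\eta\|^2$ times the probability of never returning to $\r$; your version would make $U$ increase along rays, contradicting your own (correct) observation that $U$ is non-increasing. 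Since either probability lies in $[0,1]$, the inequality $0\le U(v)\le\|\eta\|^2$ that you actually use is unaffected, so this is a misquotation rather than a gap.
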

 \begin{proof} Take $\theta = \eta$ in Proposition~\ref{ub-growth}, for the unite current flow $\eta$ on $(T,\ell)$. By Proposition~\ref{prop:height}, $\mu_{\mathrm{har}}$-almost surely, all the infinite rays of $T$ have the same length, equal to $\|\eta\|^2$, with respect to $\ell_\eta$. For the path $P$ in the proposition, since $\theta$ is positive on any edge of $[\r, v]$, we get
 $\sum_{e \in P} \eta(e)\ell(e) \leq \|\eta\|^2$, and the corollary follows.  
 \end{proof}

\begin{proof}[Proof of Proposition~\ref{ub-growth}]
 We construct the path $P$ proceeding by induction and using a greedy procedure. We actually prove both the statements in the proposition simultaneously.   Note that for all edges $e$ in $E(T_n)$, we automatically have $\widetilde \omega_n(e) >0$, by the definition of $T_n$.

 Let $n\in \mathbb N$.  Consider first the equation
 \begin{equation*}
 (*)\qquad \sum_{\r u\in E(T_n)} \omega_n(\r v)  = n = n\sum_{\r u \in E(T)} \theta(\r u).
 \end{equation*}
 On of the two following cases, (0,1) or (0.2), can happen. 
 
 \medskip
 
 (0.1) Either, there exists an edge $\r u\in E(T)$ which does not belong to $T_n$ and which satisfies $\theta(\r u) >0$.
In this case, we have $\widetilde \omega_n(\r u) =0$. Let $v=\r$. Then the path $[r,v]$ is reduced to a single vertex $\r$, and Inequality~\eqref{prop:p1} trivially holds. In addition, since $\theta(\r u)>0$, $v$ must be part of an infinite path in $T$, and we have 
$n!_{(T, \ell)} =0 \leq \sum_{e\in P}\theta(e)\ell(e)$, so that Inequality~\eqref{prop:p2} holds as well.

\medskip
(0.2) Otherwise, we have $\theta(\r v)=0$ for all $\r v \in E(T) \setminus E(T_n)$, and Equation~(*) gives
 \[\sum_{\r u\in E(T_n)} \omega_n(\r v)  = n = n\sum_{\r u \in E(T_n)} \theta(\r u).\]
 Therefor there exists an edge $\r z_1\in E(T_n)$ with $0<\widetilde \omega_n(\r z_1) \leq \theta(\r z_1)$. 
 Let $P_1$ be a strict path in $T$ which contains $\r z_1$. Since $\theta(\r z_1)>0$, and the norm of 
 $\theta$ is finite, the strict path $P_1$ has to be of  finite $\ell$-length, i.e., $\ell(P_1)<\infty$. By the assumption that $(T, \ell)$ is weakly complete,  the path $P_1$ has to be a finite path in $T$. Denote by 
 $u_1$  the other end-vertex of $P_1$. Note that $u_1$ is a vertex of $T_n$ which is not a leaf of $T$, since, otherwise, we should have $\theta(e) =0$ for all the edges in $P_1$.
 
 \medskip

Proceeding inductively on $k\in \mathbb N$, assume that we have a sequence of vertices 
$u_0=\r, u_1, u_2, \dots, u_{k}$ and $z_1, z_2, \dots, z_{k}$ such that there is a strict oriented path $P_i$ in $T_{u_{i-1}}$
from  $u_{i-1}$ to $u_{i}$ which  contains the edge $u_{i-1}z_{i} \in E(T_n)$,  for all $1\leq i\leq k$, 
the vertex $u_k$ is not a leaf of $T$, and $\theta(e) \geq \widetilde \omega_n(e)>0$ for all edges in any path among the $P_i$s. 
One of two following cases can happen

\medskip

($k$.1) either there exists an edge $u_ku \in E(T)$ 
 which does not belong to $T_n$ so that  $\theta(u_ku)>0$. In this case, we let $v=u_k$ and let $P = [\r,u_k]$ (the union of all the paths $P_1, \dots, P_{k}$). Since $\theta(u_1u)>0$, the path $P$ is part of an 
 infinite path which contains the edge $u_1u$, and we have 
 \[\frac 1n n!_T \leq \sum_{e\in P} \widetilde \omega_n(e) \leq \sum_{e\in P} \theta(e), \]
 which proves the result. 

 \medskip
 
 ($k$.2) Otherwise, for all the edges $u_ku \in E(T) \setminus E(T_n)$, we must have $\theta(u_ku)=0$. Since $\theta(e) >0$ for any $e\in [\r, u_k]$, and $\theta$ is a flow, this implies that $u_k$ is not a leaf of $T_n$.
Therefore, we must have
 $$\sum_{u_ku\in E(T_n)}\widetilde \omega_n(u_ku) = \widetilde 
 \omega_n(\cev u_k u_k)  = \widetilde \omega_n(u_{k-1}z_{k-1}) \leq \theta(u_{k-1}z_{k-1}) = \theta(\cev u_ku_k) = \sum_{u_ku\in E(T_n)}\theta(u_k u).$$
In particular, there exists $u_kz_{k+1}\in E(T_n)$ with 
\[0<\widetilde \omega_n(u_kz_{k+1}) \leq \theta(u_kz_{k+1}).\]
Let $P_{k+1}$ the strict path in $T_n$ starting from $u_k$ which contains the edge $u_kz_{k+1}$. Since $\theta$ has bounded norm, and the value of $\theta$ on all edges of 
$P_{k+1}$ are equal to $\theta(u_kz_{k+1})>0$,  from the assumption that  $(T, \ell)$ is weakly complete, we infer that 
the path $P_{k+1}$ is finite in $T$. Let $u_{k+1}$ be  the other end-point of 
$P_{k+1}$, and note that $u_{k+1}$ is not a leaf in $T$.

Since the tree $T_n$ is finite, this process eventually stops, i.e., there is an $m$ such that 
the case $(m.1)$ happens, and the proposition follows.
 \end{proof}

\subsection{Point-wise convergence of $\widetilde \omega_n$ in the case $H(T, \ell)$ is finite.}
In this section, we assume that the pair $(T, \ell)$ is so that  the sequence 
$\frac 1n n!_{(T, \ell)}$ converges to a finite number $H = H(T, \ell)<\infty.$ In particular, by Corollary~\ref{cor:ub}, what follows applies to transient weakly complete pairs $(T, \ell)$.  Our main result is Theorem~\ref{thm:pwconv} which shows that for any edge $e$, the sequence $\widetilde \omega_n(e)$  converges to a number $[0,1]$.

\medskip

Without loss of generality, using Claim~\ref{claim2.6}, we can assume that the root has branching $\br(\r)=d \geq 2$. 

Denote by $u_1, \dots, u_d$ all the children of $\r$, and set $N_j = N_{T_{u_j}}$. Let $S_j = \Bigl\{a^j_i\Bigr\}_{0\leq i<N_{j}}$ be the factorial sequence of the subtree $T_{u_j}$ (which might be finite), and define the sets $A_, \dots, A_d$ as in the previous section
$A_j = \Bigl\{ a^j_i + i \ell_{\r u_j}\Bigr\}_{0\leq i < N_j}.$ 
 
 Enumerate the terms in the multiset union $A$ of the sets $A_j$ in a fixed increasing order induced by the weighting sequence $\omega_n$, depending on to which subtree $T_{u_j}$ the (factorial-determining) vertex $x_n$ in the definition of the factorials belongs. So each element of $A$ is labeled with an index among $1,\dots, d$. For each $j$, denote by $k_j(n)+1$ the number of elements among the first $n+1$ terms in $A$ labeled by $j$, i.e., the number of indices $0\leq i < N_j$ with $a_i^j+i \ell_{\r u_j}$ among the first $n+1$ terms of the sequence $A$. 
 We have the following  straightforward, but useful, inequalities
\[0\leq k_j(n) - k_j(n-1) \leq 1,\]
\begin{equation}\label{eq2}
a_{k_j(n)}^j+k_j(n) \ell_{\r u_j} \leq n!_{(T, \ell)} \leq a_{k_j(n)+1}^j+ \bigl(k_j(n)+1\bigr) \ell_{\r u_j},
\end{equation}
for all $j=1, \dots, n$, and all $0\leq n<N_j$ .
Note that in addition,  by exhaustiveness of the weighting process proved in Proposition~\ref{prop:cover}, we have 
\begin{center}
\emph{the tree $T_{u_j}$ is infinite if and only if $k_j(n) \to \infty.$}
\end{center}
For an integer $1\leq j\leq d$ with $N_{j} = \infty$, denote by $H_{u_j}$ the limit 
\[H_{u_j}:=\lim_{k\to \infty} \frac1k a_{k}^j = H(T_{u_j}, \ell_{|T_{u_j}}),\]
which exists by Corollary~\ref{cor:limit}, and which belongs to the interval $(0,\infty]$. 

\medskip

For all $1\leq j \leq d$, and all non-negative integers $n<N_j$, we get from Equation~\eqref{eq2}
\begin{align}\label{eq:lim2}
\frac {k_j(n)}n  \,\Bigl(\,\frac {a_{k_j(n)}^j}{k_{j}(n)}+\ell_{\r u_j}\,\Bigr)\, \leq \,\frac 1n n!_{(T,\ell)} \leq  \frac{k_j(n)+1}n \Bigl(\,\frac{a_{k_j(n)+1}^j}{k_{j}(n)+1}+\ell_{\r u_j}\,\Bigr)
\end{align}
Note that we have $\frac {k_j(n)}n = \widetilde \omega_n(\r u_j)$.
We distinguish the following three different cases:

\begin{itemize}
\item[(1)] We have $N_j <\infty$. In this case, since $k_j(n)< N_j$, we get $\widetilde \omega_n(ru_j) \to 0$ as $n$ tends to infinity. 
\item[(2)] We have $N_j =\infty$ and $H_{u_j}<\infty$. In this case, making $n$ tend to infinity, we get from \eqref{eq:lim2} that the limit of $\widetilde \omega_n(e_j)$ exists and is equal to
\[\lim_{n\to \infty} \widetilde \omega_n(e_j) = \frac H{H_{u_j}+ \ell_{\r u_j}}>0.\]

\item[(3)] We have $N_j=\infty$ and $H_{u_j}=\infty$. In this case, since $\frac 1n n!_{(T,\ell)}$ converges to a finite $H$, and the term $a^j_{k_j(n)}/k_j(n)$ in~\eqref{eq:lim2} converges to infinity, we must have
\[\lim_{n\to \infty} \widetilde \omega_n(e_j) = \frac{k_j(n)}n = 0.\]
 \end{itemize}

We have thus proved the point-wise convergence of $\widetilde \omega_n$ at all the pending edges at the root of $T$. Since $\widetilde \omega_n$ is a partial flow, and the weighting is exhaustive, it follows that in the first and third cases, we actually have $\lim_{n\to \infty} \widetilde \omega_n(e) =0$ for all the edges $e$ in the subtree $T_{u_j}$.

Note that, again by the exhaustiveness of the weighting, and since $\widetilde \omega_n$ is a partial flow, we get the equation
\begin{align}
H \sum_{j=1}^d \frac 1{H_{u_j}+\ell_{\r u_j}} =1.
\end{align}

\begin{defn}\rm \label{def:hv}
For any vertex $v \in V(T)$, define $H_v \in [0,\infty]$ as follows:
\[H_v:=\begin{cases}
0 & \qquad \textrm{if the tree $T_v$ is finite,}\\
H(T_v, \ell_{|T_v}) = \lim_{n\to \infty}\frac 1n (n!)_{(T_v,\ell_{|T_v})} & \qquad \textrm{otherwise.} 
\end{cases}.
\]
\end{defn}
Proceeding now by induction on the generation $|v|$ of vertices $v\in T$, we prove the following theorem.
\begin{thm}[Point-wise convergence of $\widetilde\omega_n$]\label{thm:pwconv} For any edge $uv \in E(T)$, the limit,  when $n$ tends to infinity, of $\widetilde w_n(uv)$ exists. It is non-zero precisely when $H_v $ lies in the interval $(0, \infty)$, in which case, the limit is given by 
\[\lim_{n\to \infty} \widetilde \omega_n(uv) = \prod_{\substack{w\in [\r,v] \\ w\neq \r}} \frac {H_{\cev w}}{H_{w} + \ell_{\cev w w}}.\]
\end{thm}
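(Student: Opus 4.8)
The plan is to argue by induction on the generation $|v|$, with the analysis of the pending edges at the root carried out just above serving as the base case $|v|=1$. Throughout, write $u=\cev v$ and $\pi_v:=\prod_{w\in[\r,v],\,w\neq\r}\frac{H_{\cev w}}{H_w+\ell_{\cev w w}}$, so that the assertion is $\widetilde\omega_n(uv)\to\pi_v$ when $H_v\in(0,\infty)$ and $\widetilde\omega_n(uv)\to 0$ otherwise. The engine of the induction is the observation that the global weighting process, restricted to a subtree $T_u$, is exactly the weighting process of $T_u$ viewed as a rooted tree in its own right: for any $y\in T_u$ the weighted distance splits as $\ell_{\omega_n}([\r,y])=\ell_{\omega_n}([\r,u])+\ell_{\omega_n}([u,y])$, the first summand being common to all of $T_u$, so that the subsequence of factorial-determining vertices $x_t$ falling in $T_u$, listed in order, is a factorial-determining sequence for $(T_u,\ell|_{T_u})$. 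Consequently, writing $m=m_u(n):=\omega_n(\cev u u)$ for the local time, the restriction of $\omega_n$ to $E(T_u)$ coincides with a subtree weighting $\omega^{(u)}_m$, and $\widetilde\omega_n(uv)=\frac{m}{n}\cdot\frac1m\,\omega^{(u)}_m(uv)$.

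First I would record the monotonicity of the regimes along the tree. If $H_v\in(0,\infty)$, i.e. $T_v$ is transient, then pushing a unit flow of finite energy on $T_v$ out through the edge $uv$ produces one on $T_u$, so $T_u$ is transient and infinite; hence $H_v\in(0,\infty)\Rightarrow H_u\in(0,\infty)$. Taking the contrapositive, if $H_u\notin(0,\infty)$ then $H_v\notin(0,\infty)$, and in this case the inductive hypothesis applied to the edge $\cev u u$ (whose child endpoint $u$ has generation $|v|-1$) gives $\widetilde\omega_n(\cev u u)\to 0$; since $\widetilde\omega_n$ is a partial flow one has $0\le\widetilde\omega_n(uv)\le\widetilde\omega_n(\cev u u)$, so $\widetilde\omega_n(uv)\to 0$, in agreement with the claim.

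The substantial case is $H_u\in(0,\infty)$. Here the inductive hypothesis gives $\widetilde\omega_n(\cev u u)\to\pi_u>0$, whence $m=m_u(n)=n\,\widetilde\omega_n(\cev u u)\to\infty$. Since $H_u<\infty$, the pair $(T_u,\ell|_{T_u})$ is transient (Theorem~\ref{equivalence-intro}) and weakly complete, so the root-decomposition established just above in cases (1)--(3), applied now with $u$ as root of $T_u$ and $H_u$ playing the role of $H$, yields
\[
\lim_{m\to\infty}\frac1m\,\omega^{(u)}_m(uv)=
\begin{cases}
\dfrac{H_u}{H_v+\ell_{uv}} & \text{if }H_v\in(0,\infty),\\[2mm]
0 & \text{if }H_v=0\text{ or }H_v=\infty.
\end{cases}
\]
Because both factors of $\widetilde\omega_n(uv)=\frac{m}{n}\cdot\frac1m\omega^{(u)}_m(uv)$ converge and $m\to\infty$, the product converges to $\pi_u$ times the local limit. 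When $H_v\in(0,\infty)$ this is $\pi_u\cdot\frac{H_u}{H_v+\ell_{uv}}=\pi_v$, the extra factor being exactly the $w=v$ term of the product; when $H_v\notin(0,\infty)$ it is $0$. This closes the induction.

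The main obstacle I anticipate is making the restriction principle fully rigorous: one must check that the interleaving of the global process among the several children of $u$ does not disturb the internal ordering of the choices made inside $T_u$, so that $\omega_n|_{E(T_u)}$ really is a bona fide stage $\omega^{(u)}_{m}$ of the $T_u$-process, and that $n\mapsto m_u(n)$ is the correct local clock (here exhaustiveness, Proposition~\ref{prop:cover}, guarantees $m_u(n)\to\infty$ in the relevant regime). A minor point to dispatch is the degenerate situation $\br(u)=1$, where the root-decomposition is invoked only after the reduction of Claim~\ref{claim2.6}; there flow balance gives $\widetilde\omega_n(uv)=\widetilde\omega_n(\cev u u)$ directly, consistent with the relation $H_u=H_v+\ell_{uv}$ that forces the $w=v$ factor to equal $1$.
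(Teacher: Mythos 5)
Your architecture is the same as the paper's: induction on the generation $|v|$, with the root-case analysis (cases (1)--(3)) as the base; the dichotomy according to whether $\widetilde\omega_n(\cev u u)\to 0$ or $\widetilde\omega_n(\cev u u)\to\pi_u>0$; and, in the latter case, restriction of the global process to $T_u$ with local clock $m=\omega_n(\cev u u)$ and the factorization $\widetilde\omega_n(uv)=\widetilde\omega_n(\cev u u)\cdot\omega_n(uv)/\omega_n(\cev u u)$, whose second factor is handled by the root-case decomposition applied to $T_u$. This is exactly what the paper does (it introduces the set $I_u$ of times at which the process acts inside $T_u$ and the induced weighting sequence $\omega^u_j$), and the "restriction principle" you flag as the main obstacle is asserted there at essentially the same level of rigor as in your sketch.

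There is, however, one genuine logical flaw: both of your appeals to Theorem~\ref{equivalence-intro} are circular. The direction you invoke, namely $H<\infty\Rightarrow$ transience (used once for $T_v$ in the monotonicity step, and once for $T_u$ before applying the root decomposition), is the implication $(ii)\Rightarrow(i)$ of Theorem~\ref{thm:equiv}, whose proof in the paper rests precisely on Theorem~\ref{thm:pwconv} — the statement you are proving. Only the converse direction, Corollary~\ref{cor:ub}, is available at this stage, and even it requires weak completeness, which is not a standing hypothesis of this section (the section assumes only $H(T,\ell)<\infty$). Fortunately both uses can be excised. First, the root decomposition applied to $T_u$ needs no transience and no weak completeness: the analysis of cases (1)--(3) uses only that the normalized factorials of the tree in question converge to a finite limit, i.e. exactly $H_u<\infty$, which is the hypothesis of your "substantial case". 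Second, the monotonicity $H_v\in(0,\infty)\Rightarrow H_u\in(0,\infty)$ should be proved not by pushing a finite-energy flow through $uv$ but by the elementary inequality coming from Claim~\ref{claim2.5}: among the terms whose increasing rearrangement gives the factorials of $T_u$, the $n+1$ terms $i!_{(T_v,\ell)}+i\ell_{uv}$, $0\le i\le n$, are all bounded by $n!_{(T_v,\ell)}+n\ell_{uv}$, whence $n!_{(T_u,\ell)}\le n!_{(T_v,\ell)}+n\ell_{uv}$ and so $H_u\le H_v+\ell_{uv}<\infty$, while $H_u>0$ because $T_u\supseteq T_v$ is infinite (Corollary~\ref{cor:limit}). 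With these two repairs your proof is correct and coincides with the paper's; note that the paper itself silently omits the verification that $H_v\notin(0,\infty)$ in the case $\widetilde\omega_n(\cev u u)\to 0$, so your instinct to include the monotonicity step is sound — only its justification must avoid the equivalence theorem.
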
 

The rest of this section is devoted to the proof of this theorem. We proceed by induction on the generation $|v|$ of $v$. By what proceeded the statement of the theorem, we already proved the theorem for the pending edges at $\r$, i.e., in the case $u=\r$ and $|v|=1$. 

Let $m\in \mathbb N$, and assume that the statement holds for all vertices $v$ with $|v|=m$. We prove the theorem for all vertices of generation $m+1$. So let $uv\in E(T)$ with $|v|=m+1$. Since $|u|=m$, the statement already holds for the edge $\cev u u \in E(T)$. 
Two cases can happen:

\begin{itemize}
\item Either, $\lim_{n \to \infty} \widetilde \omega_n(\cev u u) = 0$. In this case, for all edges in the subtree $T_{\cev u}$ we have $\widetilde \omega_n(e) \leq \widetilde \omega_n(\cev uu)$, and so $\lim_{n\to \infty}\widetilde \omega_n(e)=0$. In particular, the limit when $n$ tends to infinity of $\widetilde \omega_n(uv)$ exists and is equal to zero.
\item Or, $\lim_{n \to \infty} \widetilde \omega_n(\cev u u) >0$. \end{itemize}
In this case, we have $\lim_{n\to\infty} \omega_n(\cev u u) =\infty$, and by the hypothesis of the induction,
we have $H_{\cev u} \in (0, \infty)$ and the following equation holds:
\[\lim_{n\to \infty} \widetilde \omega_n(\cev u u) = \prod_{\substack{w\in [\r, u] \\ w\neq \r}} \frac {H_{\cev w}}{H_{w} + \ell_{\cev w w}}.\]
In particular, the tree $T_{\cev u}$ is infinite. 

Denote by $I_u = \Bigl\{p_0, p_1,p_2,\dots \Bigr\} \subset \mathbb N$ the set of all the non-negative integers $n$ where an unweighted edge incident to a vertex of the subtree $T_{u}$ is weighted in the description of the weighting process, enumerated in an increasing order, so $p_0<p_1<\dots$. 
The weighting sequence $\omega_{p_j}$ restricted to the subtree $T_u$ produces a weighting sequence $\omega_j^u$ for $(T_u, \ell_{|T_u})$. Since $0<H_u<\infty$, by what proceeded before the statement of the theorem applied to $T_u$, we get that
\begin{itemize}
\item for all edges $uv \in E(T_u)$, the limit when $j$ tends to infinity of $\widetilde \omega^u_j =\omega_j^u/j$ exists; and
\item this limit is non-zero precisely when $H_v \in (0, \infty)$, in which case, the limit is given by 
\[\lim_{j \to \infty} \widetilde \omega_j^u (uv) = \frac{H_u}{H_v+ \ell_{uv}}.\]
\end{itemize}
We now observe that for all edges $uv\in E(T)$,
\begin{align*} 
\lim_{n \to \infty } \widetilde \omega_n(uv) &= \lim_{n \to \infty }\frac 1n \omega_n(uv) = \lim_{n \to \infty } \Bigl(\frac{\omega_n(\cev u u)}n \cdot \frac {\omega_n(uv)}{\omega_n(\cev u u)}\Bigr) \\
&= \lim_{n\to \infty} \widetilde \omega_n(\cev u u) \,. \lim_{n\to \infty} \widetilde \omega_j^u(uv).
\end{align*}
Combing all these together, we finally get that for $uv\in E(T)$,
\begin{itemize}
\item the limit when $n$ tends to infinity of $\widetilde \omega_n(uv)$ exists; and 
\item it is non-zero precisely when $\widetilde \omega_j^u(uv)$ has a non-zero limit when $n$ tends to infinity, i.e., when $H_v \in (0, \infty)$, in which case we have 
\begin{align*}
\lim_{n \to \infty } \widetilde \omega_n(uv)  &= \Big(\prod_{\substack{w\in [\r, u] \\ w\neq \r}} \frac {H_{\cev w}}{H_{w} + \ell_{\cev w w}}\Bigr)\cdot \frac {H_u}{H_v+ \ell_{uv}}= \prod_{\substack{w\in [\r, v] \\ w\neq \r}} \frac {H_{\cev w}}{H_{w} + \ell_{\cev w w}}\,\,.
\end{align*}
\end{itemize}
This finishes the proof of our theorem. \qed

\subsection{Equivalence of the transience of $(T, \ell)$ with finiteness of $H(T, \ell)$}
In this section, we prove Theorem~\ref{equivalence-intro}. So let $(T, \ell)$ be a pair consisting of an infinite locally finite tree and a length function $\ell$ on $T$. Assume that $(T, \ell)$ is weakly complete. 
\begin{thm}The following two statements are equivalent.
\begin{itemize}
\item[$(i)$] The pair $(T, \ell)$ is transient.
\item[$(ii)$] We have $H(T, \ell)<\infty$.
\end{itemize}
\label{thm:equiv} \end{thm}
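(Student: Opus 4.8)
The plan is to prove the two implications separately, treating $(i)\Rightarrow(ii)$ as essentially already settled and concentrating on $(ii)\Rightarrow(i)$. For the first implication, if $(T,\ell)$ is transient then the unit current flow $\eta$ exists with finite energy $\|\eta\|^2$, and Corollary~\ref{cor:ub} gives directly $H(T,\ell)=\lim_n \frac1n\,n!_{(T,\ell)}\le\|\eta\|^2<\infty$. So the whole content lies in the converse: assuming $H(T,\ell)<\infty$, I must exhibit a unit flow of bounded energy on $T$, since by the classical criterion $RW_{(T,\ell)}$ is transient if and only if $\mathscr F^b_u(T,\ell)\neq\emptyset$.

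For $(ii)\Rightarrow(i)$, the candidate flow is the pointwise limit $\eta(e):=\lim_n\widetilde\omega_n(e)$, which exists by Theorem~\ref{thm:pwconv} precisely because $H(T,\ell)<\infty$ (after reducing, via Claim~\ref{claim2.6}, to $\br(\r)\ge 2$; the degenerate case of only finitely many branching vertices produces a recurrent tree with a terminating factorial sequence, matching the stated equivalence under the convention $H=\infty$). First I would check that $\eta\in\mathscr F_u(T)$. Each $\widetilde\omega_n$ is a partial unit flow, and by the exhaustiveness of the weighting process (Proposition~\ref{prop:cover}) every edge eventually lies in some $T_n$; hence for a fixed internal vertex $u$, which is locally finite, the conservation identity $\sum_{uv\in E(T)}\widetilde\omega_n(uv)=\widetilde\omega_n(\cev uu)$ holds for all large $n$ and, being a finite sum, passes to the limit to give $\sum_{uv}\eta(uv)=\eta(\cev uu)$. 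The same argument at the root, where $\sum_{\r v}\widetilde\omega_n(\r v)=1$ for every $n$, shows that $\eta$ has total mass one; vertices $v$ with $H_v\notin(0,\infty)$ carry no flow and are handled trivially.

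The crux, and the step I expect to be the main obstacle, is the bound $\|\eta\|^2<\infty$. Here I would use the explicit product formula of Theorem~\ref{thm:pwconv} together with the balance identity $H_v\sum_{u:\,\cev u=v}\frac1{H_u+\ell_{vu}}=1$, valid at every vertex $v$ with $H_v\in(0,\infty)$ (the subtree version of the root relation $H\sum_j\frac1{H_{u_j}+\ell_{\r u_j}}=1$ derived just before Theorem~\ref{thm:pwconv}), where $H_v$ is as in Definition~\ref{def:hv}. Truncating at generation $D$ and setting $g_D(v):=\sum_{e\in E(T_v),\,|e|\le D}\ell(e)\bigl(\eta(e)/\eta(\cev vv)\bigr)^2$, I would run a finite downward induction on the generation: $g_D(v)=0$ when $|v|=D$, while for $|v|<D$ the recursion $g_D(v)=\sum_{u}\bigl(H_v/(H_u+\ell_{vu})\bigr)^2(\ell_{vu}+g_D(u))$ combined with the inductive hypothesis $g_D(u)\le H_u$ and the balance identity yields $g_D(v)\le H_v$. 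Summing at the root then gives $S_D:=\sum_{|e|\le D}\ell(e)\eta(e)^2\le H$ for every $D$, whence $\|\eta\|^2=\sup_D S_D\le H<\infty$. An alternative route to finiteness bypasses the product formula: the double-counting identity $\sum_{j=0}^{n-1}a_j=\sum_{e}\ell(e)\binom{\omega_n(e)}{2}$, obtained by counting for each edge $e$ the $\binom{\omega_n(e)}{2}$ pairs $\rho_i,\rho_j$ sharing $e$, gives $\|\widetilde\omega_n\|^2=\frac2{n^2}\sum_{j<n}a_j+\frac1{n^2}\sum_e\ell(e)\omega_n(e)$, and Fatou gives $\|\eta\|^2\le\liminf_n\|\widetilde\omega_n\|^2$; since $\frac2{n^2}\sum_{j<n}a_j\to H$ by Cesàro, the remaining task is to control $\frac1{n^2}\sum_e\ell(e)\omega_n(e)$ along a subsequence, which is exactly where the subtlety of long "frontier" strict paths enters and makes the first route preferable. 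Either way, once $\eta\in\mathscr F^b_u(T,\ell)$ has been produced, transience follows from the criterion, completing the equivalence.
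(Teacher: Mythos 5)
Your proposal is correct, and its skeleton is the paper's: $(i)\Rightarrow(ii)$ is Corollary~\ref{cor:ub}, and for $(ii)\Rightarrow(i)$ both you and the paper take the pointwise limit flow supplied by Theorem~\ref{thm:pwconv} (your $\eta$, the paper's $\phi$), check it is a unit flow, bound its energy by $H(T,\ell)$, and conclude via the criterion $\mathscr F_u^b(T,\ell)\neq\emptyset$. The genuine difference is in the energy bound, which is the crux. The paper defines the potential $F(v)=\sum_{w\in[\r,v],\,w\neq\r}\phi(\cev w w)\,\ell_{\cev w w}$, proves $F(v)\le H(T,\ell)$ by telescoping the product formula, and then invokes the flow--potential identity $\|\phi\|^2=\int_{\partial T}F\,d\mu_\phi$, established by summation by parts over exhausting cut sets (quoting Lyons). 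You instead run a finite downward induction $g_D(v)\le H_v$ on truncated subtree energies, powered by the vertex balance identity, and let the truncation depth $D\to\infty$; this is more elementary and self-contained (no boundary measure $\mu_\phi$, no extension of $F$ to $\partial T$, no cut-set argument), at the cost of hiding the potential-theoretic picture that the paper's route makes explicit and reuses conceptually alongside Proposition~\ref{prop:height} in the equidistribution results. One imprecision you should fix: in the balance identity and in the recursion for $g_D(v)$, the sums must be restricted to children $u$ with $H_u\in(0,\infty)$, equivalently $\eta(vu)>0$. A child whose subtree $T_u$ is finite has $H_u=0$ by Definition~\ref{def:hv}, so the formal term $\bigl(H_v/(H_u+\ell_{vu})\bigr)^2(\ell_{vu}+g_D(u))=\bigl(H_v/\ell_{vu}\bigr)^2\ell_{vu}$ it would contribute is nonzero, whereas its true contribution is zero because no flow passes through $vu$; with the sums so restricted, the balance identity is exactly flow conservation of the limit flow at $v$ divided by the incoming value $\eta(\cev v v)$, and your induction closes as written. (The paper's own displayed root relation carries the same implicit restriction.) Your fallback double-counting identity is correct but, as you yourself note, leaves the term $\frac1{n^2}\sum_e\ell(e)\omega_n(e)$ uncontrolled, so abandoning it in favor of the induction is the right call.
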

The implication $(i) \Rightarrow (ii)$ is already proved in Corollary~\ref{cor:ub}. We prove $(ii)$ implies $(i)$.

Let $\omega_n$ be a sequence of weighting for the pair $(T, \ell)$. Assume that $H(T, \ell) <\infty$. For each vertex $v\in T$, define $H_v$ by Definition~\ref{def:hv}. By the results of the previous section, 
we have the point-wise convergence of the sequence $\widetilde \omega_n$ to some $\phi : E(T) \rightarrow \mathbb R_{\geq 0}$. Obviously, we have $\phi \in \mathscr F_u(T)$, and by what we proved in the previous section, the non-zero values of $\phi$ on edges are given by 
\[\forall \,\, uv\in E(T) \textrm{ with }\phi(uv)\neq0 , \qquad \phi(uv)  =\prod_{\substack{w\in [\r,v]\\ w\neq \r}} \frac {H_{\cev w}}{H_{w} +\ell_{\cev w w}}.\]
The following claim finishes the proof of our theorem.
\begin{claim}\label{claim4.10} The unit flow $\phi$ has bounded energy, and thus belongs to $\mathscr F_u^b(T, \ell)$.  
\end{claim}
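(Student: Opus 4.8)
The plan is to show directly that $\|\phi\|^2 \le H$, where $H := H(T,\ell)$ is finite by hypothesis; since $\phi$ is already a unit flow, this places $\phi$ in $\mathscr F_u^b(T,\ell)$ and proves the claim. I would set $\Phi(v):=\phi(\cev v v)$ for every vertex $v\neq\r$ and $\Phi(\r):=1$, and let $F$ denote the support of $\phi$, i.e. the set of vertices with $\Phi(v)>0$, which by Theorem~\ref{thm:pwconv} is exactly the set of $v$ with $H_v\in(0,\infty)$. Two relations drive the argument. The first is the flow property $\sum_{v\in F,\,\cev v=u}\Phi(v)=\Phi(u)$, valid because $\phi$ vanishes off $F$. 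The second is the local relation
\[\Phi(v)\bigl(H_v+\ell_{\cev v v}\bigr)=\Phi(\cev v)\,H_{\cev v}, \qquad v\in F,\]
which is simply the ratio of the product formula of Theorem~\ref{thm:pwconv} taken at $v$ and at its parent (and which also holds when the parent is $\r$, using $H_\r=H$ and $\Phi(\r)=1$).

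Next I would truncate the energy by generation. For each $n$ set $E_n:=\sum_{v\in F,\,|v|\le n}\ell_{\cev v v}\,\Phi(v)^2$, the finite energy carried by edges up to generation $n$; the sequence $E_n$ increases to $\|\phi\|^2$, so it suffices to bound it uniformly. Introduce the nonnegative remainder $\rho_n:=\sum_{v\in F,\,|v|=n}H_v\,\Phi(v)^2$. The heart of the proof is the conservation identity
\[E_n+\rho_n=H \qquad \text{for all } n,\]
proved by induction. The base case is immediate, since $E_0=0$ and $\rho_0=H_\r\Phi(\r)^2=H$. For the inductive step I would group the generation-$(n+1)$ vertices of $F$ by their parents (each such parent lies in $F$ by the flow property) and, for a fixed parent $u\in F$, combine the two relations above to get
\[\sum_{\substack{v\in F\\ \cev v=u}}\Phi(v)^2\bigl(\ell_{\cev v v}+H_v\bigr)=H_u\,\Phi(u)^2;\]
summing over all $u\in F$ with $|u|=n$ yields $(E_{n+1}-E_n)+\rho_{n+1}=\rho_n$, which is exactly the inductive step. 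Since $\rho_n\ge 0$, the identity gives $E_n\le H$ for every $n$, whence $\|\phi\|^2=\lim_n E_n\le H<\infty$.

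The step I expect to require the most care is the realization that the naive backward recursion $E(v)=H_v\Phi(v)^2$ on subtrees cannot be closed directly on an infinite tree, since there are no leaves from which to start; the remainder $\rho_n$ is precisely the device that turns this heuristic into a valid \emph{forward} induction. To make the grouping legitimate I must check that every support vertex of generation $n+1$ has a support parent of generation $n$, and that every $u\in F$ has at least one child in $F$ (so that no flow is lost in the telescoping); both facts follow from the flow property together with the characterization $v\in F\iff H_v\in(0,\infty)$, so no appeal to the global normalization of $\phi$ is needed. A final minor point is that $H_\r=H\in(0,\infty)$ is strictly positive by Corollary~\ref{cor:limit}, which guarantees $\r\in F$ and thereby launches the induction.
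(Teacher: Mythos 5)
Your proof is correct, and it reaches the bound $\|\phi\|^2\leq H(T,\ell)$ by a genuinely different organization than the paper's. The paper introduces the potential $F(v)=\sum_{w\in[\r,v],\,w\neq\r}\phi(\cev w w)\,\ell_{\cev w w}$ and proves two separate claims: the pointwise bound $F(v)\leq H(T,\ell)$, obtained by telescoping the product formula of Theorem~\ref{thm:pwconv} along the path $[\r,v]$, and the representation $\|\phi\|^2=\int_{\partial T}F\,d\mu_\phi$, obtained by Abel summation over finite cut sets tending to infinity. You dispense with the boundary measure and the cut-set limit altogether: your conservation identity $E_n+\rho_n=H$ is an exact statement about finite sums, proved by induction on the generation, and the bound follows since $\rho_n\geq 0$. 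The algebraic engine is the same in both proofs --- your local relation $\Phi(v)\bigl(H_v+\ell_{\cev v v}\bigr)=\Phi(\cev v)\,H_{\cev v}$ is precisely the telescoping step in the paper's bound on its potential, and in fact $H-F(v)=H_v\Phi(v)$ on the support of $\phi$, while the flow property gives $\sum F(v)\Phi(v)=E_n$ with the sum over support vertices of generation $n$, so that your remainder $\rho_n$ is exactly the paper's telescoping deficit made explicit and the two computations are term-by-term equivalent. What your bookkeeping buys is elementarity: no measure on $\partial T$, no weak limit along cut sets, only a finite induction; your care in checking that support vertices have support parents and that support parents pass all their flow to support children is exactly what legitimizes the generation-wise grouping. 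What the paper's route buys is the identity $\|\phi\|^2=\int_{\partial T}F\,d\mu_\phi$ itself, which ties this claim to the harmonic-length statement of Proposition~\ref{prop:height} used elsewhere in the argument. Both proofs rely on the same two external inputs: that $\phi$ is a unit flow (asserted by the paper just before the claim) and the product formula of Theorem~\ref{thm:pwconv}.
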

 Define the function $F: V(T) \rightarrow \mathbb R$ as follows. Let $F(0) =0$, and for all vertices $v\in V(T) \setminus \{\r\}$, define
 \[F(v):= \sum_{\substack{w \in [\r,v]\\ w\neq r}} \phi(\cev w w) \ell_{\cev w w}.\]
\begin{claim} We have for all $v\in  V(T)$, $F(v) \leq H(T, \ell)$.
\end{claim}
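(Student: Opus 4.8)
The plan is to establish the bound $F(v) \leq H(T,\ell)$ directly from the definition of $F$ together with the explicit product formula for $\phi$ obtained in Theorem~\ref{thm:pwconv}. The key observation is that $F(v)$ is exactly the limiting $\ell$-weighted length of the path $[\r, v]$ under the limit flow $\phi$, so morally $F(v) = \lim_{n} \ell_{\widetilde\omega_n}([\r,v])$, and this in turn should be bounded by the limit of $\frac{1}{n} n!_{(T,\ell)} = H(T,\ell)$. First I would make this precise: for each $n$, the weighting process chooses the factorial-determining vertex $x_n$ so that $a_n = \ell_{\omega_n}([\r,x_n])$ is minimal among unsaturated vertices. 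Since any vertex $v$ with $\phi(\cev{w}w) > 0$ along $[\r,v]$ lies on an infinite ray of positive $\phi$-flow, the edges of $[\r,v]$ receive unboundedly many weights, so $\omega_n(\cev{v}v) \to \infty$; thus $v$ keeps producing unsaturated descendants, and at the relevant stages $\ell_{\omega_n}([\r,v]) \leq a_n \leq n!_{(T,\ell)}$ whenever the process is still weighting inside $T_v$.

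The cleanest route, which I expect to use, is to pass to the limit in the inequality $\ell_{\widetilde\omega_n}([\r,v]) \leq \frac{1}{n}\,n!_{(T,\ell)}$. Concretely, for each edge $\cev{w}w$ on $[\r,v]$ we have $\widetilde\omega_n(\cev{w}w) \leq \widetilde\omega_n(\r u_{j})$ type monotonicity along the path, and summing $\sum_{w \in [\r,v], w\neq\r} \widetilde\omega_n(\cev{w}w)\,\ell_{\cev{w}w}$ gives precisely $\ell_{\widetilde\omega_n}([\r,v])$. The crucial step is that this quantity is bounded above by $\frac{1}{n}\,n!_{(T,\ell)}$: indeed, as long as the subtree $T_v$ is still being explored (i.e.\ $\omega_{n+1}$ differs from $\omega_n$ at an edge of $T_v$), the chosen vertex $x_n$ has weighted distance $\geq \ell_{\omega_n}([\r,v])$ by minimality at the point where the path $[\r,v]$ first became fully weighted. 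Taking $n\to\infty$ along the subsequence $I_v$ of stages where $T_v$ is explored, the left side converges to $F(v)$ by Theorem~\ref{thm:pwconv} (each $\widetilde\omega_n(\cev{w}w) \to \phi(\cev{w}w)$ and the path is finite), while the right side converges to $H(T,\ell)$, yielding $F(v) \leq H(T,\ell)$.

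The main obstacle will be handling vertices $v$ for which $H_v = 0$ or $H_v = \infty$, where some edge of $[\r,v]$ has $\phi(\cev{w}w) = 0$. In those cases the product formula gives $F(v)$ as a sum in which the tail terms vanish in the limit, but one must check the convergence $\ell_{\widetilde\omega_n}([\r,v]) \to F(v)$ still holds and that the comparison with $\frac{1}{n}\,n!$ remains valid; the clean way around this is to note that if $\phi(\cev{w}w) = 0$ for some $w \preceq v$, then $\widetilde\omega_n(e) \to 0$ for every edge $e$ in $T_{\cev{w}}$ downstream of $w$, so those terms of the sum defining $F$ contribute nothing in the limit and $F(v)$ coincides with $F$ evaluated at the last vertex before the flow dies, reducing to the positive-flow case already treated. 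I would therefore split into the case where $\phi$ is strictly positive along all of $[\r,v]$ (the generic case handled by the limit argument above) and the degenerate case where the path leaves the support of $\phi$ (handled by the observation that $F$ stabilizes and the tail is negligible).

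With the claim $F(v) \leq H(T,\ell)$ in hand, Claim~\ref{claim4.10} follows immediately: $F$ is a bounded increasing function along every ray, and the energy $\|\phi\|^2 = \sum_e \ell_e \phi(e)^2$ telescopes against the increments of $F$. Precisely, writing $\Delta F$ along each edge as $\phi(\cev{w}w)\ell_{\cev{w}w}$, one sums $\sum_e \ell_e \phi(e)^2 = \sum_e \phi(e)\cdot\bigl(\phi(e)\ell_e\bigr)$ and uses the flow property to reorganize this as a sum over rays of $\sum_{w} \phi(\cev{w}w)\,\Delta F(w)$, which is controlled by $\sup_v F(v) \leq H(T,\ell) < \infty$ since $\phi$ is a unit flow. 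Hence $\phi \in \mathscr F_u^b(T,\ell)$, establishing transience and completing the proof of $(ii) \Rightarrow (i)$.
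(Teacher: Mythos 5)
Your proof is correct, but it takes a genuinely different route from the paper's. The paper's argument is purely algebraic: it substitutes the explicit product formula of Theorem~\ref{thm:pwconv}, $\phi(\cev v v)=\prod_{i}H_{v_{i-1}}/(H_{v_i}+\ell(e_i))$, into the definition of $F(v)$ and observes that the resulting sum telescopes, giving the exact identity $F(v)=H_\r-H_\r\prod_{j=1}^{k}H_{v_j}/\bigl(H_{v_j}+\ell(e_j)\bigr)$, whence $F(v)\leq H_\r=H(T,\ell)$ at once (after the same reduction you make, namely to vertices with $\phi$ positive along $[\r,v]$, equivalently $0<H_v<\infty$). You instead argue dynamically: you identify $F(v)$ as $\lim_n \ell_{\widetilde\omega_n}([\r,v])$, note that at every stage $n$ with $x_n\in T_v$ one has $\ell_{\omega_n}([\r,v])\leq \ell_{\omega_n}([\r,x_n])=n!_{(T,\ell)}$, verify that such stages occur infinitely often because $\phi(\cev v v)>0$ forces $\omega_n(\cev v v)\to\infty$ and that edge is incremented only at stages with $x_n\in T_v$, and then pass to the limit along this subsequence; the degenerate case where $\phi$ vanishes somewhere on $[\r,v]$ is reduced to the positive case via the downstream-monotonicity of the weights, exactly as the paper's one-line reduction does. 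One small correction: the key inequality $\ell_{\omega_n}([\r,v])\leq \ell_{\omega_n}([\r,x_n])$ follows from path containment and non-negativity of the weights, not from ``minimality'' of $x_n$ as you say (minimality goes the wrong way here); this is a presentational slip, not a gap. The trade-off between the two proofs: the paper needs the precise value of the limit flow from Theorem~\ref{thm:pwconv} but then finishes in one computation, which moreover yields the quantitative defect $H(T,\ell)-F(v)=H_\r\prod_j H_{v_j}/(H_{v_j}+\ell(e_j))$ for free; your argument uses only the existence of the pointwise limit plus the combinatorics of the weighting process, so it is more robust to not knowing the formula, at the cost of the extra argument that $T_v$ is explored infinitely often and the separate treatment of the degenerate case.
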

\begin{proof} It will be enough to prove the result for any vertex $v \in V(T)$ with $0<H_v <\infty$. Let $v$ be  such a vertex and denote by $v_0=\r, v_1, \dots, v_k=v$ all the vertices on the path  $[\r,v]$ from $\r$ to $v$, with $e_i :=v_{i-1}v_i \in E(T)$ for $i=1, \dots, k$. We have
\begin{align*}
F(v)& = \sum_{\substack{u\in [\r,v] \\ u \neq \r}} \phi(\cev u u)\ell(\cev u u) = \sum_{j=1}^k \ell(e_j)\prod_{i=1}^j \frac{H_{v_{i-1}}}{H_{v_i} + \ell(e_i)} \\
&= \sum_{j=1}^k \Bigl(H_{v_j}+\ell(e_j) - H_{v_j}\Bigr)\prod_{i=1}^j \frac{H_{v_{i-1}}}{H_{v_i} + \ell(e_i)} \\
&= \sum_{j=1}^k \Bigl(\bigl(H_{v_j}+\ell(e_j)\bigr)\prod_{i=1}^j \frac{H_{v_{i-1}}}{H_{v_i} + \ell(e_i)} -   H_{v_j}\prod_{i=1}^j \frac{H_{v_{i-1}}}{H_{v_i} + \ell(e_i)}\Bigr)\\\
&= H_\r - H_\r\cdot \frac{\prod_{j=1}^{k}H_{v_j}}{\prod_{j=1}^{k}(H_{v_j} + \ell(e_j))} \leq H_\r = H{(T, \ell)}.
\end{align*} 
\end{proof}

By the previous claim we can extend $F$ to a function on the boundary $\partial T$. We have
\begin{claim} Denote by $\mu_\phi$ the measure of mass one on $\partial T$ associated to $\phi$. 
We have $\|\phi\|^2 = \int_{\partial T} F \,d\mu_\phi.$
\end{claim}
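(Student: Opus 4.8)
The plan is to recognize that the auxiliary function $F$ is nothing but the partial $\ell_\phi$-length along rays, namely $F(v) = \sum_{e \in [\r,v]} \phi(e)\ell(e)$, and then to evaluate the integral by a Fubini--Tonelli interchange, exploiting the defining relation $\mu_\phi(B_v) = \phi(\cev v v)$ between the flow and its boundary measure. Everything in sight is non-negative because $\phi \geq 0$, so the interchange is unconditional.

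First I would extend $F$ to the boundary. For an infinite ray $s = (\r = v_0, v_1, v_2, \dots) \in \partial T$, the sequence $F(v_k)$ is non-decreasing in $k$, since each summand $\phi(\cev w w)\ell_{\cev w w}$ is non-negative, and it is bounded above by $H(T, \ell)$ by the preceding claim. Hence the limit $F(s) := \lim_{k\to\infty} F(v_k) = \sum_{e \in s}\phi(e)\ell(e)$ exists, and $F$ is a bounded Borel function on $\partial T$ (a monotone limit of the manifestly Borel functions $s \mapsto F(v_k)$). Writing $\mathbf 1_{[e \in s]}$ for the indicator that the edge $e$ lies on the ray $s$, this gives the pointwise identity
\[F(s) = \sum_{e \in E(T)} \phi(e)\ell(e)\, \mathbf 1_{[e \in s]}.\]

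Next, since all terms are non-negative, Tonelli's theorem allows me to interchange the summation over edges with the integration over $\partial T$:
\[\int_{\partial T} F\, d\mu_\phi = \sum_{e \in E(T)} \phi(e)\ell(e)\, \mu_\phi\bigl(\{s \in \partial T : e \in s\}\bigr).\]
The one step that carries the content is the identification of the measure of each cylinder. For an edge $e = \cev v v$, a ray from the root contains $e$ if and only if it passes through $v$, so $\{s : e \in s\} = B_v$, and by the very definition of $\mu_\phi$ we get $\mu_\phi(B_v) = \phi(\cev v v) = \phi(e)$. Substituting this yields
\[\int_{\partial T} F\, d\mu_\phi = \sum_{e \in E(T)} \phi(e)\ell(e)\,\phi(e) = \sum_{e \in E(T)} \ell(e)\phi(e)^2 = \|\phi\|^2,\]
which is the asserted identity.

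I expect no genuine obstacle here: the justification of the interchange and the measurability of $F$ are immediate from non-negativity and monotone convergence, and the cylinder computation $\{s : e \in s\} = B_v$ is a direct consequence of the tree structure. I note that, combined with the previous claim, this identity also completes the proof of Claim~\ref{claim4.10}: since $F \leq H(T, \ell)$ pointwise and $\mu_\phi$ has total mass one, $\|\phi\|^2 = \int_{\partial T} F\, d\mu_\phi \leq H(T, \ell) < \infty$, so $\phi \in \mathscr F_u^b(T, \ell)$.
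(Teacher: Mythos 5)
Your proof is correct. The paper establishes the same identity by a different organization of the computation: it starts from $\|\phi\|^2=\sum_{uv\in E(T)}\phi(uv)\bigl(F(v)-F(u)\bigr)$ and performs a discrete summation by parts over a finite downward-closed vertex set $U$, using the flow-conservation property of $\phi$ to collapse the partial sum to the boundary term $\sum_{v\in \partial W}F(v)\,\mu_\phi(B_v)$, and then lets the cut-set tend to infinity (a monotone-convergence step, since these boundary sums are integrals of truncations of $F$). You instead expand $F(s)=\sum_{e\in E(T)}\phi(e)\ell(e)\,\mathbf 1_{[e\in s]}$ and interchange sum and integral by Tonelli, so the whole argument rests on the single identification $\mu_\phi\bigl(\{s:e\in s\}\bigr)=\mu_\phi(B_v)=\phi(\cev v v)$. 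The two computations are dual ways of evaluating the same double sum, but yours is shorter, avoids justifying the exhaustion limit, and yields the identity as an equality in $[0,\infty]$ without any a priori finiteness of $\|\phi\|^2$ --- which is exactly the right setting, since the finiteness of $\|\phi\|^2$ is what Claim~\ref{claim4.10} is meant to establish. Note also that in the paper's argument flow conservation is invoked explicitly in the summation by parts, whereas in yours it enters only implicitly, through the fact (already recorded in the paper) that $v\mapsto\phi(\cev v v)$ defines a Borel measure of total mass one on $\partial T$; both proofs ultimately depend on that fact.
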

\begin{proof} This is  standard fact, and can be found e.g. in~\cite[Section 4]{Lyons90}. The idea is that one can write 
\[\sum_{uv\in E(T)} \phi(uv)^2 \ell(uv) = \sum_{uv\in E(T)} \phi(uv) (F(v)-F(u)).\]
Consider a finite cut set $C$ of $T$ with vertex set $U$ and with complementary vertex set $W = V(T) \setminus U$.  Using that $\phi \in \mathscr F_u(T)$,  we have for the partial sum
\[\sum_{\substack{u\in U \\ uv\in E(T)}} \phi(uv) (F(v)-F(u)) = F(\r)+ \sum_{v\in \partial W} F(v) \phi(\cev v v)  = \sum_{v\in \partial W} F(v) \mu_\phi(B_v),\]
where $\partial W$ is the set of all vertices $v$ with $\cev v \in U$, and $B_v$ is the open subset of $\partial T$ defined previously. The result now follows by tending the cut set $C$ to infinity. 
\end{proof}

Combining the two previous claims gives 
\begin{equation}\label{eq:ub}
\|\phi\|^2  = \int_{\partial T} F d\mu_\phi \leq H(T, \ell)<\infty,
\end{equation} and finishes the proof of Claim~\ref{claim4.10}. The proof of Theorem~\ref{thm:equiv} is now complete.

\subsection{Proof of Theorem~\ref{thm:measure}}
With what we proved in the previous sections, we can now complete the proof of Theorem~\ref{thm:measure}. 

Let $(T, \ell)$ be a transient pair, and denote by $\eta \in \mathscr F_u^b(T, \ell)$ the corresponding unit current flow on $T$. Denote by $\phi$ the point-wise limit of $\widetilde \omega_n$ for a weighting sequence $\omega_n$. 

By Corollary~\ref{cor:ub}, we have $H(T, \ell) \leq \|\eta\|^2$. On the other hand, by Inequality~\ref{eq:ub}, 
we have $\|\phi\|^2 \leq H(T,\ell)$. Since $\eta \in \mathscr F_u^b(T, \ell)$ is the flow of minimum energy, it follows that $\phi = \eta$, which is part $(1)$ of  Theorem~\ref{thm:measure}. Part $(2)$  is a direct consequence of part $(1)$. Part $(3)$ follows from the equality $\phi=\eta$ combined with the inequalities of Corollary~\ref{cor:ub} and Equation~\ref{eq:ub}.

\section{Concluding remarks} \label{sec:concluding}
We include here a  brief discussion of some results and questions complementary  to what we presented in the previous sections. 
\subsection{Removed version}
Let $t\in \mathbb N$. Let $T$ be a locally finite tree, $\ell$ and $\chi$ a length and capacity function on $T,$ respecitvly. One can define a $t$-removed version of the factorials associated to $(T, \ell, \chi)$. Choose $\rho_0, \dots, \rho_{t-1} \in \widetilde T$ arbitrarily, in such a way that the capacity condition is verified. Assuming that $\rho_0, \dots, \rho_{n-1}$ are chosen, one chose $\rho \in \widetilde \partial T$ among those unsaturated elements $\rho \in \widetilde \partial T$ which minimizes the quantity 
\[a_n(\rho)=\min_{\substack{A \subset \{0, \dots, n-1\}\\ |A| = n-t}} \sum_{j\in A} \langle \rho, \rho_j\rangle,\]
and define $a_n = a_n(\rho_n)$.

\begin{thm} The sequence $\{a_n\}$ only depends on $(\Gamma, \chi)$ and $t$.
\end{thm}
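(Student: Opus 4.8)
The plan is to adapt, essentially verbatim, the inductive argument behind Theorem~\ref{thm:main1}. Since the pairing $\langle\cdot,\cdot\rangle$ is defined intrinsically on the extended boundary $\widetilde\partial\Gamma$, each quantity $a_n(\rho)$, and hence the sequence $\{a_n\}$, depends a priori only on $(\Gamma,\chi)$ and $t$ \emph{together with} the choices made in the greedy procedure, namely the arbitrary initial elements $\rho_0,\dots,\rho_{t-1}$ and the tie-breaking in each selection of $\rho_n$. Thus the entire content of the theorem is the independence of $\{a_n\}$ from these choices, and I would prove it by induction on $n$, establishing a property $\mathscr P_n^{(t)}$ asserting that for every triple $(T,\ell,\chi)$ the first $n+1$ terms of any $t$-removed sequence coincide, exactly as $\mathscr P_n$ does in the proof of Theorem~\ref{thm:main1}. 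Model independence and the induced action on capacities then follow verbatim from Theorem~\ref{thm:main1}$(ii)$, since only the root, branching, and leaf structure and the lengths of strict paths in the subtrees at branching vertices enter.

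The inductive step rests on a \emph{localization} of the removal at the root. Assume $\br(\r)=d\ge 2$ with children $u_1,\dots,u_d$; the case $\br(\r)=1$ is handled by the shift identity $a_n=(n-t)\ell(P)+b_n^{(v)}$ for $n>t$ (and $a_n=0$ otherwise), in analogy with Claim~\ref{claim2.6}, where $P$ is the strict path to the branching vertex $v$ and $b_n^{(v)}$ is the $t$-removed factorial of $T_v$. For a candidate $\rho$ whose initial edge enters $T_{u_i}$, let $m_i$ be the number of previously chosen $\rho_j$ lying in $T_{u_i}$. For $\rho_j$ entering $T_{u_k}$ with $k\ne i$ one has $\langle\rho,\rho_j\rangle=0$, while for $\rho_j$ in $T_{u_i}$ one has $\langle\rho,\rho_j\rangle=\ell_{\r u_i}+\langle\rho',\rho_j'\rangle>0$, where $\rho'$, $\rho_j'$ are the induced elements of $\widetilde\partial T_{u_i}$. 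Since $a_n(\rho)$ discards the $t$ \emph{largest} pairings, the discarded ones are always among the strictly positive same-subtree pairings; a direct count then gives $a_n(\rho)=0$ when $m_i\le t$, and
\[
a_n(\rho)=(m_i-t)\,\ell_{\r u_i}+\min_{\substack{B\subseteq\{\,j\,:\,\rho_j\in T_{u_i}\,\}\\ |B|=m_i-t}}\ \sum_{j\in B}\langle\rho',\rho_j'\rangle
\]
when $m_i>t$. The second summand is precisely the $t$-removed quantity evaluated at $\rho'$ against the $m_i$ induced elements, so minimising over $\rho$ entering $T_{u_i}$ yields the value $(m_i-t)\ell_{\r u_i}+b^{(i)}_{m_i}$, where $b^{(i)}_m$ is the $m$-th $t$-removed factorial of $(T_{u_i},\ell_i,\chi_i)$, well defined by $\mathscr P_n^{(t)}$ applied to the (proper) subtrees.

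Consequently, setting
\[
d^{(i)}_m:=\begin{cases}0, & m\le t,\\ (m-t)\,\ell_{\r u_i}+b^{(i)}_m, & m>t,\end{cases}
\]
each sequence $\{d^{(i)}_m\}_m$ is nondecreasing (a block of zeros followed by a strictly increasing tail), and any $t$-removed sequence of $(T,\ell,\chi)$ is obtained by merging the multisets $\{d^{(i)}_m\}$ in increasing order, each selection using the currently smallest available term. This is the exact analogue of the merge of the sets $A_j$ in Claim~\ref{claim2.5}, and reproduces $\{a_n\}$ as the sorted merge, independent of tie-breaking, by the same exchange argument as in the proof of Theorem~\ref{thm:main1}. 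Independence from the \emph{initial} choices $\rho_0,\dots,\rho_{t-1}$ is now transparent: whenever $m_i\le t$ the root value is $0$ regardless of where in $T_{u_i}$ the candidate lies, so every point placed in $T_{u_i}$ while $m_i\le t$ is effectively free and its restriction serves as an arbitrary initial element of the $t$-removed process on $T_{u_i}$; by the induction hypothesis the numbers $b^{(i)}_m$ are insensitive to these free choices, and the sorted merge is determined. Capacities are incorporated exactly as in Section~\ref{sec:def}, the capacity condition merely restricting which $\rho$ are unsaturated and thus which $d^{(i)}_m$ are available.

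The main obstacle I anticipate is the localization step itself: one must verify carefully that removing the $t$ largest pairings never touches a cross-subtree pairing (these being the minimal, zero, values) and that the residual sum is exactly the within-subtree $t$-removed quantity shifted by $(m_i-t)\ell_{\r u_i}$ — in particular that the \emph{same} parameter $t$ governs the recursion on subtrees, with the global budget absorbed one subtree at a time. Once this identity is established, well-definedness is a routine transcription of the greedy-merge argument of Theorem~\ref{thm:main1}, with the zeros $d^{(i)}_m$ ($m\le t$) playing the role that the $d$ vanishing leading terms play in the $t=0$ case.
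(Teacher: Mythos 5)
Your proposal is correct, and it is exactly the adaptation the paper intends: the paper gives no proof of this statement beyond the remark that ``the proof is similar to the proof of Theorem~\ref{thm:main1}'', and your induction on $n$ (property $\mathscr P_n^{(t)}$ quantified over all triples), the root-level decomposition with the sorted-merge identification, the exchange argument, and the shift identity for $\br(\r)=1$ are precisely the ingredients of that proof transposed to the $t$-removed setting. The one genuinely new ingredient, which the paper leaves entirely implicit, is your localization lemma: since cross-subtree pairings vanish, the $t$ discarded pairings can always be taken among the same-subtree ones, so each subtree inherits the \emph{same} removal parameter $t$, the candidate's value being $\max\{0,m_i-t\}\,\ell_{\r u_i}$ plus the within-subtree $t$-removed quantity, and the induced subtree sequences are themselves valid $t$-removed greedy sequences (the elements placed while $m_i\leq t$ serving as the arbitrary initial ones). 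You state and justify this correctly, and it is the step on which everything else hinges.

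One point deserves flagging: your localization contradicts the (also unproved) recursive min-max formula displayed immediately after this theorem in the paper, which splits the budget as $t=t_1+\dots+t_d$ over the subtrees and uses the full shift $(n_j-1)\ell(\r u_j)$. Your version --- same $t$ in every subtree, shift $(n_j-1-t)\ell(\r u_j)$ truncated at zero --- appears to be the correct one. On the complete binary tree $\mathscr T_2$ with $\ell\equiv 1$ and $t=1$, a direct computation gives $4!^{\{1\}}=1$: with the four previous boundary points spread over the four depth-two vertices, any new ray has two zero pairings, one pairing equal to $1$ and one at least $2$, and discarding the largest leaves exactly $1$; this agrees with your merge of $d^{(i)}=(0,0,1,2,\dots)$, whereas the paper's displayed formula yields $2$ (and is in fact vacuous as literally written, since $(t_1,\dots,t_d)\in\mathbb N_*^d$ with $\sum t_j=t$ has no solutions when $t<d$). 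So your writeup not only supplies the proof the paper omits, but also corrects the neighboring recursion; the remaining details you defer to Theorem~\ref{thm:main1} (monotonicity of each $d^{(i)}$, and the counting that keeps all subtree indices needed for the first $n+2$ merged terms at most $n$, so that $\mathscr P_n^{(t)}$ suffices) do carry over verbatim as you claim.
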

The proof is similar to the proof of Theorem~\ref{thm:main1}, and leads to a combinatorial proof  of a generalization of~\cite{Bha09}.  Define $n!_{(\Gamma, \chi)}^{\{t\}}:=a_n$. We have the following theorems.
\begin{thm} Let $(T, \ell)$ be a pair of a locally finite tree $T$ rooted at $r$, and a length function $\ell$ on $T$. Let $d=\br(r)$ and denote by $u_1, \dots, u_d$ all the children of $r$. Let $\Gamma_j$ be the metric tree associated to the pair $(T_{u_j}, \ell_{|T_{u_j}})$, and let $\chi_j$ be the restriction of $\chi$ to $T_{u_j}$. We have for all $0\leq n < N_{T, \chi}$,
\[n!^{\{t\}}_{(\Gamma, \chi)}  = \min_{\substack{ (t_1, \dots, t_d)\in \mathbb N_*^d \\ t_1+\dots +t_d=t}}
 \,\, \min_{\substack{(n_1, \dots, n_d) \in \mathbb N_*^d\\
n_1+ \dots +n_d = n+1}} \max\,\, \Bigl\{\,(n_j-1)!^{\{t_j\}}_{(\Gamma_j, \chi_j)} + (n_j -1) \ell(\r u_j)\,\Bigr\}_{j=1}^d.\]
\end{thm}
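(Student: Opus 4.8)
The plan is to adapt the inductive proof of Theorem~\ref{thm:main1} and its min-max corollary to the removed setting. Concretely, I would prove, for all triples $(T,\ell,\chi)$ and all $t$ simultaneously, the implication $\mathscr P^{\{t\}}_n\Rightarrow\mathscr P^{\{t\}}_{n+1}$, where $\mathscr P^{\{t\}}_n$ asserts that the first $n+1$ terms of the $t$-removed sequence depend only on $(\Gamma,\chi)$ and $t$ and obey the stated recursion. The base case is immediate: for $n\le t$ one has $n-t\le 0$, so the only admissible set is $A=\varnothing$ and $a_n=0$. For the inductive step I would first dispose of the case $\br(\r)=1$ exactly as in Claim~\ref{claim2.6}: along the unique strict path $P$ issuing from $\r$ all the intersections $\langle\rho,\rho_j\rangle$ are nested, so passing from $T$ to $T_v$ shifts every value by $i\,\ell(P)$ and leaves the removal operation untouched, reducing us to a branching root.

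The heart of the argument is the branching case $\br(\r)=d\ge 2$ with children $u_1,\dots,u_d$. The key structural fact is that any two elements of $\widetilde\partial\Gamma$ lying in distinct subtrees $T_{u_i},T_{u_j}$ meet only at $\r$, so their intersection pairing vanishes. Hence, for a candidate $\rho\in\widetilde\partial T_{u_j}$, the optimal removal set $A$ in $a_n(\rho)$ discards pairings only among the sequence elements that also lie in $T_{u_j}$, and each surviving pairing splits as $\ell(\r u_j)$ plus the intrinsic pairing inside $T_{u_j}$. Writing $n_j$ for the number of sequence elements placed in $T_{u_j}$ and $t_j$ for the number of removals charged to $T_{u_j}$, so that $\sum_j n_j=n+1$ and $\sum_j t_j=t$, the value attached to an element of $T_{u_j}$ equals $(n_j-1)!^{\{t_j\}}_{(\Gamma_j,\chi_j)}+(n_j-1)\ell(\r u_j)$, the first summand being supplied by $\mathscr P^{\{t_j\}}$ applied to the strictly smaller subtree data $(\Gamma_j,\chi_j)$. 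Since the greedy procedure always selects an element realizing the current minimum, $n!^{\{t\}}_{(\Gamma,\chi)}$ is the $(n+1)$-st smallest value obtained by merging these $d$ shifted subtree sequences; the standard selection identity for merges of nondecreasing lists then rewrites this $(n+1)$-st order statistic as the nested minimum over the allocations $(t_1,\dots,t_d)$ and $(n_1,\dots,n_d)$ of the inner maximum, which is exactly the asserted formula.

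The step I expect to be most delicate is the bookkeeping of the removals: one must check that merging the subtree processes faithfully models the global greedy, i.e.\ that the optimal distribution $t=t_1+\dots+t_d$ is genuinely attained and that no configuration is lost by allocating removals subtree-by-subtree. This is where the extra outer minimization over $(t_1,\dots,t_d)$, absent in the $t=0$ case of Theorem~\ref{thm:main1}, enters, and it relies on the monotonicity of each map $i\mapsto (i)!^{\{t_j\}}_{(\Gamma_j,\chi_j)}+i\,\ell(\r u_j)$, which guarantees that the merged lists are nondecreasing and that the double minimum is achieved. Independence of the value from all the choices made would be established by mirroring the comparison argument in the proof of Theorem~\ref{thm:main1}$(i)$ and Claim~\ref{claim2.5}: assuming two runs first disagree at some index, I would compare the induced allocations in the respective multiset unions and force a contradiction by a swapping argument using the monotonicity just described. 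Finally, invariance under change of model with the same metric realization is inherited verbatim from Theorem~\ref{thm:main1}$(ii)$, since the branching structure and the lengths of strict paths are metric invariants.
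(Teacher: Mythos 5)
The paper itself offers no proof of this theorem (it appears in the concluding remarks, with only the well-definedness statement said to follow the proof of Theorem~\ref{thm:main1}), so your proposal must stand on its own, and it has a genuine gap: the central bookkeeping step is false, and in fact the identity as printed fails. Your structural observation is correct --- elements of $\widetilde\partial\Gamma$ in distinct subtrees pair to zero, so for a candidate $\rho\in\widetilde\partial T_{u_j}$ the optimal removal set discards only pairings with elements of $T_{u_j}$ --- but the sentence that follows contradicts it. The removal budget is \emph{per step}, not global: every candidate spends the full budget $t$ inside its own subtree, so there is no allocation $t=t_1+\dots+t_d$ across subtrees to minimize over. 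Moreover, each discarded pairing $\langle\rho,\rho_i\rangle=\ell(\r u_j)+\langle\rho',\rho_i'\rangle_{\Gamma_j}$ takes its $\ell(\r u_j)$ summand with it, so if $\rho$ is the $n_j$-th element placed in $T_{u_j}$, only the $\max(0,\,n_j-1-t)$ surviving pairings contribute $\ell(\r u_j)$. The value of the best such candidate is therefore
\[(n_j-1)!^{\{t\}}_{(\Gamma_j,\chi_j)}+\max(0,\,n_j-1-t)\,\ell(\r u_j),\]
not $(n_j-1)!^{\{t_j\}}_{(\Gamma_j,\chi_j)}+(n_j-1)\,\ell(\r u_j)$, and the merge (order-statistic) identity you invoke then yields
\[n!^{\{t\}}_{(\Gamma,\chi)}=\min_{\substack{(n_1,\dots,n_d)\\ n_1+\dots+n_d=n+1}}\ \max_{j:\,n_j\geq 1}\ \Bigl\{(n_j-1)!^{\{t\}}_{(\Gamma_j,\chi_j)}+\max(0,\,n_j-1-t)\,\ell(\r u_j)\Bigr\},\]
with the \emph{same} $t$ in every subtree and no outer minimum over $(t_1,\dots,t_d)$. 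The same correction is needed in your $\br(\r)=1$ reduction: the shift there is $\max(0,\,i-t)\,\ell(P)$, not $i\,\ell(P)$.

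That this is not a repairable slip can be checked on $T=\mathscr T_2$ (the binary tree, standard lengths, so each $\Gamma_j\cong\mathscr T_2$) with $t=1$, $n=4$. A valid $1$-removed sequence is $\rho_0=000\dots$, $\rho_1=111\dots$, $\rho_2=011\dots$, $\rho_3=100\dots$ (all values $0$); any fifth ray has exactly two positive pairings, only one of which may be discarded, and the smaller one is at least $1$, with value $1$ attained by $\rho_4=010\dots$; hence $4!^{\{1\}}_{\mathscr T_2}=1$, in agreement with the corrected recursion (per-subtree values $i!^{\{1\}}_{\mathscr T_2}+\max(0,i-1)=0,0,1,2,\dots$, merged multiset $0,0,0,0,1,\dots$). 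The stated formula instead gives $2$: as literally written, with $t_j\in\mathbb N_*$ and $t=1<d=2$, the outer minimum is over an empty set; reading $t_j\geq 0$, the best choice is $(t_1,t_2)=(1,0)$, $(n_1,n_2)=(3,2)$, giving $\max\bigl\{2!^{\{1\}}_{\mathscr T_2}+2,\ 1!_{\mathscr T_2}+1\bigr\}=2\neq 1$. So no swapping or monotonicity argument along the lines of Claim~\ref{claim2.5} can establish the claimed identity; what your (otherwise sound) merge strategy proves is the corrected formula above, and the theorem as printed needs to be amended accordingly.
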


\begin{thm}\label{thm:limt}
For any pair $(T, \ell, \chi)$ with $N_{T,\chi} =\infty$, and any $t\in \mathbb N$, we have 
\[\lim_{n\to \infty} \frac 1n n!^{\{t\}}_{(T, \ell, \chi)}   =  \lim_{n\to \infty} \frac 1n n!_{(T, \ell,\chi)}.\]
\end{thm}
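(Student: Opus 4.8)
The plan is to squeeze the $t$-removed factorials between two ordinary factorials whose indices differ from $n$ only by the fixed constant $t$, and then let $n\to\infty$. Concretely, I would prove the two purely combinatorial inequalities
\[(n-t)!_{(T,\ell,\chi)} \;\le\; n!^{\{t\}}_{(T,\ell,\chi)} \;\le\; n!_{(T,\ell,\chi)}\]
for all $n\ge t$, both by induction on $n$ ranging over the class of \emph{all} triples $(T,\ell,\chi)$, exactly in the style of the proof of Proposition~\ref{prop:supad}. As there, the analogue of Claim~\ref{claim2.6} reduces everything to the case $\br(\r)=d\ge 2$, where both the ordinary and the $t$-removed min-max formulas are available, and the base cases (a single vertex, or a rooted path, where $i!^{\{t\}}=\max(i-t,0)\,\ell(P)$) are immediate.

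For the upper bound I would fix a minimizing partition $(n_1,\dots,n_d)$ with $\sum_j n_j=n+1$ for the ordinary min-max, together with any admissible split $(t_1,\dots,t_d)$ of $t$ (allowing $t_j=0$, with the reading $(\cdot)!^{\{0\}}=(\cdot)!$). Since each $n_j-1<n$, the induction hypothesis on the subtrees gives $(n_j-1)!^{\{t_j\}}_{(\Gamma_j,\chi_j)}\le (n_j-1)!_{(\Gamma_j,\chi_j)}$, hence
\[\max_j\Bigl\{(n_j-1)!^{\{t_j\}}_{(\Gamma_j,\chi_j)}+(n_j-1)\ell(\r u_j)\Bigr\}\;\le\;\max_j\Bigl\{(n_j-1)!_{(\Gamma_j,\chi_j)}+(n_j-1)\ell(\r u_j)\Bigr\}\;=\;n!_{(T,\ell,\chi)},\]
and taking the outer minimum over $(t_j),(n_j)$ yields $n!^{\{t\}}\le n!$.

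For the lower bound I would show that \emph{every} term of the removed min-max already dominates $(n-t)!$. Given any $(t_j)$ and $(n_j)$ appearing in that formula, set $m_j:=n_j-t_j$, so $\sum_j m_j=(n+1)-t=(n-t)+1$. The induction hypothesis on the subtrees gives $(n_j-1)!^{\{t_j\}}_{(\Gamma_j,\chi_j)}\ge (n_j-1-t_j)!_{(\Gamma_j,\chi_j)}=(m_j-1)!_{(\Gamma_j,\chi_j)}$, and since $n_j-1\ge m_j-1$ we also have $(n_j-1)\ell(\r u_j)\ge (m_j-1)\ell(\r u_j)$; therefore the inner maximum dominates $\max_j\{(m_j-1)!_{(\Gamma_j,\chi_j)}+(m_j-1)\ell(\r u_j)\}$, which is $\ge (n-t)!$ by the ordinary min-max once all $m_j\ge 1$. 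Minimizing over the choices gives $n!^{\{t\}}\ge (n-t)!$.

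Dividing the sandwich by $n$ and invoking Corollary~\ref{cor:limit} then finishes the argument: both $\tfrac1n n!$ and $\tfrac1n(n-t)!=\tfrac{(n-t)!}{n-t}\cdot\tfrac{n-t}{n}$ converge to the same value $H(T,\ell,\chi)\in(0,+\infty]$, so the squeeze forces $\tfrac1n n!^{\{t\}}\to H(T,\ell,\chi)$; note this uses neither transience nor weak completeness and covers the case $H=\infty$ as well. The step I expect to be the main obstacle is the lower bound: the substitution $m_j=n_j-t_j$ can produce degenerate parts $m_j\le 0$ (precisely when a subtree is allotted $t_j\ge n_j$ units of forgiveness), for which $(m_j-1)!$ must be read as $0$ and which are not directly covered by the ordinary min-max. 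I would handle these by observing that such a choice squanders forgiveness on a subtree and can be replaced by one with all parts positive without increasing the inner maximum, so the bound by $(n-t)!$ survives. A secondary point to pin down is the convention ($\mathbb N$ versus $\mathbb N_*$) for the admissible $(t_j)$ in the removed min-max formula; the argument above only uses that $t_j=0$ is permitted and that $\sum_j t_j=t$, so it remains valid as long as the formula is read with $t_j\ge 0$.
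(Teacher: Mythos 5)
The central step of your argument --- the sandwich $(n-t)!_{(T,\ell,\chi)} \le n!^{\{t\}}_{(T,\ell,\chi)} \le n!_{(T,\ell,\chi)}$ --- fails in its lower half, and this is a genuine gap, not a fixable issue of conventions. Here is a counterexample computed directly from the definition of the removed process, with no appeal to any min--max formula. Let $T=\mathscr T_2$ be the rooted binary tree with standard length and capacity functions, let $t=1$, and label rays by binary strings. Take $\rho_0=000\cdots$, $\rho_1=100\cdots$, $\rho_2=010\cdots$, $\rho_3=110\cdots$; each is a legitimate greedy choice with removed sum $0$, since after deleting its largest pairing all remaining pairings vanish. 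Next, $\rho_4=001\cdots$ has pairings $(2,0,1,0)$ with its predecessors, and no ray does better (any ray shares its first edge with two previous rays, one of which it meets in length $\ge 2$), so $4!^{\{1\}}_{\mathscr T_2}=1$; at the next step any ray beginning with $0$ meets each of $\rho_0,\rho_2,\rho_4$ in length $\ge 1$ (removed sum $\ge 2$), while $\rho_5=111\cdots$ has pairings $(0,1,0,2,0)$, so $5!^{\{1\}}_{\mathscr T_2}=1$. On the other hand $4!_{\mathscr T_2}=\val_2(4!)=3$. Thus $(5-1)!_{\mathscr T_2}=3>1=5!^{\{1\}}_{\mathscr T_2}$, contradicting your lower bound. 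In fact one can check that $n!^{\{t\}}_{\mathscr T_2}=\sum_{i\ge 1}\max\bigl(0,\lfloor n/2^i\rfloor -t\bigr)$, so the true inequality points the other way, $n!^{\{t\}}\le (n-t)!$, with a gap of order $t\log_2 n$, which is unbounded.

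The failure is structural, and it also infects the removed min--max formula printed in the paper, on which your induction leans (note that the paper states both that formula and Theorem~\ref{thm:limt} without proof, so there is no argument there to fall back on). If $\rho_n$ lies in $T_{u_j}$, its pairings with all previously chosen elements of the other subtrees vanish; hence, once $T_{u_j}$ contains at least $t$ previous elements, the $t$ deleted pairings are all pairings with elements of $T_{u_j}$, and each deleted pairing contains one copy of $\ell(\r u_j)$. The recursion forced by the definition is therefore
\[
n!^{\{t\}}_{(\Gamma,\chi)}=\min_{\substack{(n_1,\dots,n_d)\\ n_1+\cdots+n_d=n+1}}\ \max_{j}\ \Bigl\{(n_j-1)!^{\{t\}}_{(\Gamma_j,\chi_j)}+\max\bigl(n_j-1-t,\,0\bigr)\,\ell(\r u_j)\Bigr\},
\]
with the \emph{full} budget $t$ recurring inside every subtree and the coefficient of $\ell(\r u_j)$ reduced by $t$, rather than a split $t_1+\cdots+t_d=t$ with coefficient $(n_j-1)$; on $\mathscr T_2$ with $t=1$, $n=5$ the printed formula evaluates to $3$, not $1$. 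Because the discount $t$ reappears at every generation, the savings of the removed process accumulate along the depth explored and cannot be absorbed by any fixed shift $(n-c)!$. Your upper bound $n!^{\{t\}}\le n!$ does survive (run your same induction on the corrected recursion, using $\max(n_j-1-t,0)\le n_j-1$), so your argument yields $\limsup_n \frac 1n n!^{\{t\}}\le H(T,\ell,\chi)$; but the matching lower bound requires a genuinely different idea, namely that the accumulated savings are $o(n)$ --- for instance by rerunning the weighting-process and flow analysis of Section~\ref{sec:growth} for the removed process in the transient case, and showing separately that $\frac 1n n!^{\{t\}}\to\infty$ in the recurrent case.
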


\subsection{Subsets of $\mathbb Z$ versus $p$-trees} 
Let $\mathcal P$ be the set of prime numbers in $\mathbb Z$. 
For any subset $X \subset \mathbb Z$ of integers, and any prime $p$, denote by $T_{X, p}$ the tree associated to $X \subset \mathbb Z \subset \mathbb Z_p$. By the discussion in the introduction, we have
\[n!_X  = \prod p^{n!_{T_{X,p}}},\]
where $n!_X$ denotes the Bhargava's factorial of $n$ for the set $X$.
Note in particular that for two subsets $X, Y \subset \mathbb Z$, we have $n!_X = n!_Y$ if and only if $n!_{T_{X,p}} = n!_{T_{Y, p}}$ for all $p\in \mathcal P$, and any $n\in \mathbb N$. In particular, two subsets of integers with the same $p$-trees, have the same factorials.
 
In this regard, it seems natural to wonder $(1)$ how can two subsets of the integers have the same collection of $p$-trees ? $(2)$ what can be said about the $p$-trees of two subsets $X$ and $Y$ if they have same factorials? and $(3)$ which collections of $p$-trees, one for each $p \in \mathcal P$, come from a subset $X$ of $\mathbb Z$ ? The following proposition shows that in general two sets $X$ and $Y$ with the same factorial sequence can be very different.
\begin{prop} Let $\epsilon\in (0,1)$ be a fixed positive real number.
 Let $X$ be a random subset of $\mathbb Z$ obtained by choosing 
 any integer $k \in \mathbb Z$ with probability $\epsilon$ independently at random.
 For all $n \in \mathbb N$, we have $n!_X = n!$.
\end{prop}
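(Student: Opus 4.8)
The plan is to reduce the statement (which must be understood as holding almost surely, since $X$ is random) to the single assertion that, with probability one, the $p$-tree $T_{X,p}$ coincides with $T_{\mathbb Z,p}$ for \emph{every} prime $p$. Indeed, by the multiplicative formula $n!_X = \prod_p p^{\,n!_{T_{X,p}}}$ recalled in this subsection, together with Bhargava's classical identity $n!_{\mathbb Z} = n!$ (equivalently $n!_{T_{\mathbb Z,p}} = v_p(n!)$, where $v_p$ is the $p$-adic valuation), it suffices to prove that $T_{X,p} = T_{\mathbb Z,p}$ for all $p$ almost surely. Once that equality is in hand, the $p$-tree factorials of $X$ and of $\mathbb Z$ agree for every $p$ and every $n$, and the formula immediately gives $n!_X = n!_{\mathbb Z} = n!$.

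First I would record that $T_{\mathbb Z,p}$ is the full $p$-regular rooted tree, and that the subtree $T_{X,p} \subseteq T_{\mathbb Z,p}$ is completely determined by its vertex sets $V_h = \phi_h(X) \subseteq \mathbb Z/p^h$, where $\phi_h : \mathbb Z \to \mathbb Z/p^h$ is the reduction map. Consequently $T_{X,p} = T_{\mathbb Z,p}$ holds if and only if $\phi_h(X) = \mathbb Z/p^h$ for every $h$, that is, if and only if $X$ meets every residue class modulo $p^h$ for all $h$.

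Next I would run an elementary Borel--Cantelli argument. Fix a prime $p$, an exponent $h$, and a residue $a \in \mathbb Z/p^h$. The class $\{k \in \mathbb Z : k \equiv a \pmod{p^h}\}$ is infinite, and the events $\{k \in X\}$ are independent each of probability $\epsilon$; hence the probability that $X$ contains no element of this class equals $\lim_{N \to \infty}(1-\epsilon)^N = 0$, using $\epsilon > 0$ so that $1-\epsilon < 1$. Since there are only countably many triples $(p,h,a)$, the union over all of them of these null events is again null. Therefore, with probability one, $X$ meets every residue class modulo every prime power, so that $T_{X,p} = T_{\mathbb Z,p}$ simultaneously for all $p$, and the proposition follows.

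I do not anticipate a genuine obstacle; the only point requiring care is the passage from ``each bad event is null'' to ``the global bad event is null,'' which is exactly where the countability of the index set $(p,h,a)$ is used. Note that the hypothesis $\epsilon < 1$ plays no role in the argument itself (only $\epsilon > 0$ is needed); it merely ensures that $X$ is a proper, and typically sparse, subset of $\mathbb Z$, so that the equality $n!_X = n!$ is a non-trivial phenomenon.
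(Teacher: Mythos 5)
Your proof is correct and follows essentially the same route as the paper, whose entire argument is the one-line observation that $T_{X,p}$ is the full $p$-regular tree $\mathscr{T}_p$ for every prime $p$, whence $n!_X = n!$ by the multiplicative formula. You have simply made explicit the almost-sure justification (each residue class modulo $p^h$ is hit with probability one, and there are countably many triples $(p,h,a)$) that the paper leaves implicit.
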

\begin{proof}
For any prime $p$, the tree $T_{X,p}$ is the regular $p$-tree $\mathscr T_p$. It follows that $n!_X = n!.$ 
\end{proof}

In particular, it seems very unlikely to have an answer to (1) without any further assumption on
$X$ and $Y$.

Regarding (3), by applying Chinese reminder lemma, we have the following proposition.
\begin{prop} \label{prop:crm}
 Let $\{T_p\}$ be a sequence of trees one for each prime $p$, with $T_p$ a subtree of the $p$-regular tree $\mathscr T_p$ without any leaf. Assume there exists an integer $n$ such that for all primes
 $p>n$, the tree $T_p$ is the complete $p$ tree. There exists a subset $X$ of $\mathbb Z$ whose associated $p$-tree is equal to $T_p$ for all prime $p$.
\end{prop}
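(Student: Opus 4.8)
The plan is to build $X$ as the set of all integers that simultaneously respect every prescribed tree, and then use the Chinese Remainder Theorem to show that this single set already meets every required vertex. Since $T_p=\mathscr T_p$ for all $p>n$, only the finitely many primes $p_1<\dots<p_k\le n$ impose a genuine constraint. For each prime $p$ let $Y_p\subset \mathbb Z_p$ be the closed set whose reduction modulo $p^h$ is exactly the vertex set $V_h^{(p)}$ of $T_p$ at generation $h$; the hypothesis that $T_p$ has no leaf guarantees that every vertex of $T_p$ extends to an infinite ray, so $T_p$ really is the adelic tree of the closed set $Y_p$. I would then set
\[
X:=\bigl\{\,x\in\mathbb Z \ :\ x\in Y_p\ \text{for every prime }p\,\bigr\}
=\bigl\{\,x\in\mathbb Z \ :\ x \bmod p_i^{\,h}\in V_h^{(p_i)}\ \text{for all }1\le i\le k\text{ and }h\in\mathbb N\,\bigr\},
\]
the second description being valid because the condition is vacuous for the unconstrained primes $p>n$.

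By the very definition of $X$, every $x\in X$ reduces into the prescribed vertex set at each level, so $\phi_h(X)\subseteq V_h^{(p)}$ and hence $T_{X,p}\subseteq T_p$ for all $p$. The entire content of the proposition is therefore the reverse inclusion, that is, the \emph{surjectivity} assertion: for every prime $p$, every level $h$, and every vertex $v$ of $T_p$ at generation $h$, there must exist an integer $x\in X$ with $x\equiv v \pmod{p^h}$. Equivalently, $X$ must be dense in each $Y_p$, and must surject onto $\mathbb Z/p^h\mathbb Z$ for every $p>n$. The plan reduces the whole statement to realizing each such demand by a genuine integer.

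To realize a given target vertex $v$ of $T_q$ at level $h$, I would first extend $v$ to an infinite ray of $T_q$ — possible precisely because $T_q$ has no leaf — and choose an arbitrary ray of $T_{p_i}$ at each of the other constrained primes, and then glue these finitely many local prescriptions into one integer via the Chinese Remainder Theorem, leaving all primes $p>n$ completely free. The main obstacle I expect is that membership $x\in Y_{p_i}$ is a condition on \emph{all} levels at once, while CRT only pins down finitely many of them; the point that must carry the argument is exactly that only the finitely many primes $p_1,\dots,p_k$ are constrained whereas every prime $p>n$ is free, giving enough room to place an honest integer inside the intersection $\bigcap_{i}Y_{p_i}$ that realizes the prescribed residue at $q$. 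Surjectivity at the unconstrained primes $p>n$ is then automatic from the density of $\mathbb Z$ in $\widehat{\mathbb Z}=\prod_p\mathbb Z_p$. Running this over the countable list of all demands $(p,h,v)$ and collecting the resulting integers yields a set $X$ with $\phi_h(X)=V_h^{(p)}$ for every $p$ and $h$, i.e. $T_{X,p}=T_p$ for all primes $p$, as desired.
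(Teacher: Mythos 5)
Your reduction is correct up to the last step, and you have in fact put your finger on exactly the point where the argument breaks — but the way you dismiss the obstacle does not work. Membership in a single constrained set $Y_{p_i}$ is already \emph{infinitely} many congruence conditions (one at each level $h$), whereas the Chinese Remainder Theorem matches only finitely many; the freedom at the primes $p>n$ is irrelevant to this difficulty, since it sits entirely inside one constrained prime. Density of $\mathbb Z$ in $\prod_i\mathbb Z_{p_i}$ cannot place an integer in a closed set with empty interior, and the set you need to hit, $\bigl(Y_q\cap(v+q^h\mathbb Z_q)\bigr)\times\prod_{p_i\neq q}Y_{p_i}$, is typically such a set. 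In fact your $X$ can be empty: take $T_2\subset\mathscr T_2$ to be the single infinite ray through the vertices $-2/3\bmod 2^h$ (a leafless subtree, since $3$ is invertible in $\mathbb Z_2$) and $T_p=\mathscr T_p$ for all $p>2$; then $Y_2=\{-2/3\}$ contains no integer at all (an integer $m$ in $Y_2$ would satisfy $3m=-2$), so $X=\emptyset$ and realizes nothing.

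Moreover, no argument can close this gap, because the proposition as literally stated is false in this generality. As you correctly observe, any realizing set must be contained in the diagonal intersection $X=\mathbb Z\cap\bigcap_iY_{p_i}$, so the statement is \emph{equivalent} to your $X$ working; but with $T_2$ the ray through $0\bmod 2^h$ and $T_3=\mathscr T_3$, any realizing set consists of integers divisible by $2^h$ for every $h$, hence equals $\{0\}$, whose $3$-adic tree is a single ray and not $\mathscr T_3$. (The paper offers only the phrase ``by applying the Chinese remainder lemma''; your proposal is the natural elaboration of that phrase, and it shows that the hypothesis ``without any leaf'' is too weak.) What the CRT argument does prove is the statement under the stronger hypothesis that each constrained tree $T_p$, $p\leq n$, is complete below some finite depth $h_p$, i.e.\ every vertex at depth at least $h_p$ has all $p$ of its children — equivalently, each $Y_p$ is open and closed in $\mathbb Z_p$. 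Then $x\in Y_{p_i}$ reduces to the single condition $x\bmod p_i^{h_i}\in V_{h_i}^{(p_i)}$, so your $X$ is cut out by finitely many congruences; for any demand $(q,h,v)$ one extends $v$ to a vertex of $T_q$ at depth $\max(h,h_q)$ and solves the resulting finite system by CRT, and both the containment $T_{X,p}\subseteq T_p$ and the surjectivity then go through exactly as you outlined. You should either add this completeness-below-finite-depth hypothesis or note explicitly that some such strengthening is needed.
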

Question (3) for more general collection of trees seems to be quite interesting on its own.

Regarding question $(2)$ above, applying the above proposition, we infer from the existence of non-isomorphic trees with the same factorial sequence for a prime $p$, the existence of two subsets  $X$ and $Y$ with the same factorial sequence and without necessary the same adelic trees. So again the answer to $(2)$ seems to be rather delicate. 
\subsection{Factorials of definable sets} A structure theorem for definable sets over $p$-adic numbers is proved by Halupczok in~\cite{Hal10, Hal14}, see also~\cite{CCL12}. It appears to be an interesting problem to study the factorials of trees of definable sets.

\subsection*{Acknowledgement}  
Some of the ideas and results of this paper were presented  to  A.
Prodhomme and F. Reverchon  during the winter semester in 2014, when I was
supervising their first year memoir work at ENS on Bhargava's work on
rings of integer valued polynomials. I would like to thank them both for
their interest in the subject and for the discussions during that
semester. It is also a pleasure to thank A. Salehi Golsefidy, A. Rajaei, and H. Amini for helpful conversations.

\end{document}